\newtheorem{remark}[theorem]{Remark}
\newtheorem{example}[theorem]{Example}
\newtheorem{summary}[theorem]{Summary}
\newcommand{\R}{\mathbb R}
\newcommand{\C}{\mathbb C}
\newcommand{\ph}{\phantom}
\newcommand{\smath}[1]{{\mbox{\scriptsize $#1$}}}
\newcommand{\smtxa}[2]{
{\mbox{\scriptsize
$\left[\!\!
\begin{array}{#1}
#2
\end{array} \!\! \right]$}}}
\font\tenex=cmex10 
\newdimen\p@renwd
\def\bmat#1{\begingroup \m@th
  \setbox\z@\vbox{\def\cr{\crcr\noalign{\kern2\p@\global\let\cr\endline}}%
    \ialign{$##$\hfil\kern2\p@\kern\p@renwd&\thinspace\hfil$##$\hfil
      &&\quad\hfil$##$\hfil\crcr
      \omit\strut\hfil\crcr\noalign{\kern-\baselineskip}%
      #1\crcr\omit\strut\cr}}%
  \setbox\tw@\vbox{\unvcopy\z@\global\setbox\@ne\lastbox}%
  \setbox\tw@\hbox{\unhbox\@ne\unskip\global\setbox\@ne\lastbox}%
  \setbox\tw@\hbox{$\kern\wd\@ne\kern-\p@renwd\left[\kern-\wd\@ne
    \global\setbox\@ne\vbox{\box\@ne\kern2\p@}%
    \vcenter{\kern-\ht\@ne\unvbox\z@\kern-\baselineskip}\,\right]$}%
  \null\;\vbox{\kern\ht\@ne\box\tw@}\endgroup}
\author{Michiel~E.~Hochstenbach\thanks{%
Version \today.
Department of Mathematics and Computer Science,
TU Eindhoven,
PO Box 513, 5600 MB, The Netherlands,
{\tt www.win.tue.nl/$\sim$hochsten}.
This author has been supported by an NWO Vidi research grant.}
\and
Christian Mehl\thanks{%
Institut f\"{u}r Mathematik, Technische Universit\"at Berlin, Sekretariat MA 4-5,
Stra\ss e des 17.~Juni 136, 10623 Berlin, Germany,
{\tt mehl@math.tu-berlin.de}.
This author has been supported by a Dutch 4TU AMI visitor's grant.}
\and
Bor~Plestenjak\thanks{%
Faculty of Mathematics and Physics, University of Ljubljana,
Jadranska 19, SI-1000 Ljubljana, Slovenia,
{\tt bor.plestenjak@fmf.uni-lj.si}.
This author
has been supported by the Slovenian Research Agency (grant N1-0154).}}
\title{Solving singular generalized eigenvalue problems \\
Part II: projection and augmentation.}
\begin{document}
\maketitle

\begin{abstract}
Generalized eigenvalue problems involving a singular pencil may be very challenging to solve, both with respect to accuracy and efficiency.
While Part I presented a rank-completing addition to a singular pencil, we now develop two alternative methods. The first technique is based on a projection onto subspaces with dimension equal to the normal rank of the pencil while the second approach exploits an augmented matrix pencil.
The projection approach seems to be the most attractive version for generic singular pencils because of its efficiency, while the augmented pencil approach may be suitable for applications where a linear system with the augmented pencil can be solved efficiently.
\end{abstract}

\begin{keywords}
Singular pencil, singular generalized eigenvalue problem, projection of normal rank, perturbation theory, rectangular pencil,
double eigenvalues, augmented matrix, bordered matrix, 
constrained eigenvalue problem.
\end{keywords}

\begin{AMS}
65F15, 65F50, 15A18, 15A22, 
15A21, 47A55, 
65F22 
\end{AMS}

\pagestyle{myheadings}
\thispagestyle{plain}
\markboth{HOCHSTENBACH, MEHL, AND PLESTENJAK}{SOLVING SINGULAR GEPs BY PROJECTION}

\section{Introduction}
\label{sec:intro}
In this paper, which is a sequel to Part I \cite{HMP19}, we further study the computation of eigenvalues of {\em singular} matrix pencils.
Whereas in \cite{HMP19} a method based on a rank-completing perturbation has been introduced (i.e., an update by a pencil of a rank that is precisely sufficient to render the updated pencil regular), we propose in this paper a scheme based
on rank projection (i.e., a
projection of the pencil onto a subspace of maximal possible size such that the
projected pencil is generically regular), as well as an augmentation method, as two alternative techniques.

We briefly recall the context and main results from \cite{HMP19}.
Let $A-\lambda B$ be a singular pencil, where $A,B$ are (real or complex) $n \times m$ matrices.
This means that either $m \ne n$, or $m = n$ and $\det(A-\lambda B) \equiv 0$.
Then the normal rank of $A-\lambda B$, which is a crucial quantity for the methods in \cite{HMP19} and in this paper, is given by
\[
\textrm{nrank}(A,B) := \max_{\zeta \in \C} \textrm{rank}(A-\zeta B).
\]
The value $\lambda_0\in \mathbb C$ is an eigenvalue of the pencil $A-\lambda B$ when $\textrm{rank}(A-\lambda_0 B) < \textrm{nrank}(A,B)$, and if $\textrm{rank}(B) < \textrm{nrank}(A,B)$ then $\lambda_0=\infty$ is an eigenvalue
of $A-\lambda B$.
Most of this paper focuses on the square case $m=n$. Some results, such as \Cref{main} and Algorithm~1, are also for the nonsquare case, where we can assume $n \ge m$ without loss of generality because we can always switch to the transpose pencil.

In \cite{HMP19}, the predecessor of this paper, we have investigated generic \emph{rank-completing perturbations} of
square singular pencils $A-\lambda B$, i.e., perturbations
of rank $k:=n-\textrm{nrank}(A,B)$ that have the form
\begin{equation}
\label{pert}
\widetilde A-\lambda \widetilde B := A-\lambda B+\tau \, (UD_AV^*-\lambda \, UD_BV^*),
\end{equation}
where $D_A,D_B\in \C^{k,k}$ are diagonal matrices such that $D_A-\lambda D_B$ is a regular
pencil, $U,V\in \C^{n,k}$, and $\tau\in \mathbb C$ is nonzero.
The name \emph{rank-completing perturbation} is based on the
observation that the rank $k$ of the perturbation pencil
$UD_AV^*-\lambda \, UD_BV^*$ is just large enough to ``complete'' the normal rank $n-k$ of $A-\lambda B$ to full rank $n$. Indeed, it turns out that generically with respect to the
entries of $U$ and $V^*$ the perturbed pencil in~\eqref{pert} has full rank and is consequently regular. Moreover,
for each $\tau\ne 0$ the eigenvalues of \eqref{pert}
contain all eigenvalues of the initial singular pencil $A-\lambda B$; see \cite{HMP19} and below for details.
Although the approach is theoretically independent of $\tau \ne 0$, for numerical stability it is recommended to avoid very small or large values;
a suggested value in \cite{HMP19} for $A$ and $B$ scaled so
that $\|A\|_1=\|B\|_1=1$ is $\tau=10^{-2}$.

Rank-completing perturbations have shown to often give excellent results, as seen in \cite{HMP19} as well as subsequent publications
(see, e.g., \cite{KreGlib22}).
Still, the following observations may be considered minor disadvantages of the technique of using rank-completing perturbations.
\begin{itemize}
\item Computing the updates $\tau \, U D_A V^*$ and $\tau \, U D_B V^*$ in \eqref{pert} requires
$\mathcal{O}(kn^2)$ operations as extra work, and potentially leads to a minor loss of accuracy.
When $k = \mathcal{O}(n)$, as is the case in problems related to systems
of bivariate polynomials in \cref{sec:poly}, the amount of
extra work is $\mathcal{O}(n^3)$, albeit with a small constant.
This means that this amount is of the same order as that for solving the generalized eigenproblem.
\item The approach is not parameter-free, in the sense that generic (e.g., random) $n \times k$ matrices $U$ and $V$
as well as a parameter $\tau$ and $k \times k$ (diagonal) matrices $D_A$ and $D_B$ have to be selected.
In contrast, the approach developed in \cref{sec:proj} and \cref{subsec:proj} does not require $\tau$, $D_A$, and $D_B$,
while the technique in \cref{sec:augm} avoids any work prior to the solution of the generalized eigenproblem and
leaves the matrices $A$ and $B$ intact.
\item Although a singular pencil has only $r < n$ true eigenvalues (where sometimes even $r \ll n$), the method still needs to compute eigenvalues of an $n \times n$ pencil together with the right and left eigenvectors; this forms the main computational work of the algorithm.
The projection approach in \cref{sec:proj} and \cref{subsec:proj} works on a smaller pencil of size equal to the normal rank: $(n-k)\times (n-k)$.
\end{itemize}
Motivated by these observations, we present two alternative approaches in \cref{subsec:proj} (with its theoretical foundation developed in \cref{sec:proj}) and \cref{sec:augm}.
These techniques have their own features, resulting in alternative and complementing schemes.
The approaches are partly inspired by techniques for constrained eigenvalue problems;
see, e.g., the classic work by Golub, Arbenz, and Gander \cite{Gol73, AGG88, Arb12}
for symmetric constrained standard and generalized eigenvalue problems.
In this paper, the focus is on generalized nonsymmetric constrained eigenproblems
(see also \cite[Remark~4.8]{HMP19}), with special attention to the singularity of the pencil.

The paper is organized as follows. In \cref{sec:rankcomplete} we recall the main results of Part I \cite{HMP19}, to ensure that this document is reasonably self-contained, but also
to provide a preparation for new results derived in this paper.
Several necessary technical results are presented in \cref{sec:prelim}
before the theory of generic projections of singular pencils to a regular
pencil of size equal to the normal rank is developed in \cref{sec:proj}. The proof of the main result is delayed to \cref{sec:proof} for an easy flow of the paper.
 \Cref{subsec:proj} then introduces a projection approach based on
the results from the previous section. This method seems attractive for all problems, particularly for those having large $k/n$ ratios, that is, a relatively small normal rank compared to the size of the pencil.
As a second alternative method, \cref{sec:augm} presents a new method based on an augmented pencil, which may be interesting in cases where the
ratio $k/n$ is small (i.e., the normal rank is large in comparison to $n$) and we can solve a (regular) eigenvalue problem with the augmented matrices efficiently. In \cref{sec:classif} we discuss how to extract finite eigenvalues from eigenvalues of the regular part.
Some numerical experiments are presented in \cref{sec:num},
followed by \cref{sec:nrank}, where we discuss the computation of the normal rank and consequences of using an inaccurate normal rank.
Finally, conclusions are summarized in \cref{sec:concl}.

Throughout the paper, $\|\cdot\|$ denotes the 2-norm.

\section{Review of rank-completing perturbations of singular pencils}
\label{sec:rankcomplete}
The following main result from Part~I \cite[Summary~4.7]{HMP19} (see also
\cite[Thms.~4.3 and 4.6]{HMP19}) characterizes the dependence of eigenvalues and eigenvectors of the perturbed
pencil~\eqref{pert} on $\tau$, $D_A$, $D_B$, $U$, and $V^*$.
(For a reminder of the Kronecker canonical form and a specification of the term ``generic'' we refer to section 3.)

\begin{summary} \rm \label{summry}
Let $A-\lambda B$ be an $n\times n$ singular pencil of normal rank $n-k$
with left minimal indices $n_1,\dots,n_k$ and right minimal indices $m_1,\dots,m_k$.
Let $N = n_1 + \cdots + n_k$ and $M = m_1 + \cdots + m_k$. Thus,
the regular part of $A-\lambda B$ has size $r:=n-N-M-k$. Then,
generically with respect to the entries of the matrices $U$ and $V^*$ in~\eqref{pert},
the perturbed pencil \eqref{pert} is regular and its eigenvalues are classified as follows:
\begin{enumerate}
\item \emph{True eigenvalues}: There are $r$ eigenvalues that coincide precisely with
the eigenvalues of the original pencil $A-\lambda B$.
The corresponding right eigenvectors $x$ and left eigenvectors $y$ satisfy the
orthogonality relations $V^*x=0$ and $U^*y=0$.
\item \emph{Prescribed eigenvalues}: There are $k$ eigenvalues such that the
corresponding right eigenvectors $x$ and left eigenvectors $y$ satisfy
both $V^*x\ne 0$ and $U^*y\ne 0$. These eigenvalues are the $k$ eigenvalues
of $D_A-\lambda D_B$.
\item \emph{Random eigenvalues}: These are the remaining $N+M$ ($= n-r-k$) eigenvalues.
They are simple and if $\mu$ is such an eigenvalue with the corresponding right eigenvector $x$ and
left eigenvector $y$,
then we either have $V^*x=0$ and $U^*y\ne 0$, or $V^*x\ne 0$ and $U^*y=0$.
\end{enumerate}
\end{summary}

Note that the vectors $x$ and $y$ referred to above are eigenvectors
of the {\em perturbed} pencil.
They may be viewed as well-defined eigenvectors of the original pencil with two added orthogonality constraints $V^*x=0$ and $U^*y=0$. This agrees with a recent definition of eigenvectors of singular pencils in \cite{DopNof}, where eigenvectors are subspaces of certain quotient spaces, as the vectors $x$ and $y$ as above are particular elements of these
quotient spaces.

In our three approaches (the rank-completing algorithm from \cite{HMP19} and the new algorithms from
\cref{subsec:proj} and \cref{sec:augm}) we first extract the true eigenvalues $\lambda_i$, $i=1,\ldots,r$, together with the right and left eigenvectors $x_i$ and $y_i$ in the sense described above, and then we use the values $|y_i^*Bx_i|$ to further divide the true eigenvalues into finite and infinite ones; for details see \cref{sec:classif}.

In \cref{tab:work} we provide an overview of the main work for the three approaches: the perturbation method of \cite{HMP19}, and the new projection and augmentation schemes of this paper.
The numbers of various types of eigenvalues of the different methods are given in \cref{tab:nreigs}.

\begin{table}[htb!] \label{tab:work}
\centering
\caption{Main work for an $n \times n$ singular pencil with normal rank $n-k$ (and therefore rank-completing dimension $k$) for each of the three methods: determining the normal rank, the update/projection/augmentation step, and the actual solution of the eigenproblem.}
{\footnotesize
\begin{tabular}{lcccll} \hline \rule{0pt}{2.3ex}%
Method $\backslash$ Task & nrank & Preparation & Eigentriplets & Attractive for \\ \hline \rule{0pt}{2.6ex}%
Perturbation by rank $k$ & ${\cal O}(n^3)$ & ${\cal O}(n^2k)$ & ${\cal O}(n^3)$ & (Method of \cite{HMP19}) \\[1mm]
Projection onto dim.~$n-k$ & ${\cal O}(n^3)$ & ${\cal O}(n^2(n-k))$ & ${\cal O}((n-k)^3)$ & Always, especially for large $k$ \\[1mm]
Augmentation by $k$ & ${\cal O}(n^3)$ & $0$ & ${\cal O}((n+k)^3)$ & For large $n$ if we can solve the \\
\quad &&&& augmented system efficiently \\ \hline
\end{tabular}}
\end{table}

\begin{table}[htb!] \label{tab:nreigs}
\centering
\caption{Number of various types of eigenvalues for an $n \times n$ singular pencil with normal rank $n-k$: true (finite $n_{\rm f}$ and infinite $n_{\rm i}$), prescribed, and random $n_{\rm r}$, where $n_{\rm f} + n_{\rm i} + n_{\rm r} = n-k$.}
{\footnotesize
\begin{tabular}{lcccc} \hline \rule{0pt}{2.3ex}%
Method & True $\lambda$'s & Prescribed $\lambda$'s & Random $\lambda$'s & Size \\ \hline \rule{0pt}{2.5ex}%
Perturbation & $n_{\rm f}+n_{\rm i}$ & $k$ & $n_{\rm r}$ & $n$ \\
Projection & $n_{\rm f}+n_{\rm i}$ & $0$ & $n_{\rm r}$ & $n-k$ \\
Augmentation & $n_{\rm f}+n_{\rm i}$ & $2k$ & $n_{\rm r}$ & $n+k$ \\ \hline
\end{tabular}}
\end{table}

\begin{example}\rm
We consider a system of two random bivariate polynomials $p_1$ and $p_2$ of total degree 10 (e.g., coefficients that follow a normal distribution).
The number of (finite) roots is (generically) 100.
The determinantal representation from \cite{BDD17} gives
$19 \times 19$ matrices $A_i, B_i, C_i$ with
$p_i(\lambda,\mu) = \det(A_i+\lambda B_i+\mu C_i)$ for $i=1,2$.
To solve for $\lambda$, we define the operator determinants
$\Delta_1 = C_1 \otimes A_2 - A_1 \otimes C_2$ and
$\Delta_0 = B_1 \otimes C_2 - C_1 \otimes B_2$,
which are of size $361 \times 361$, with ranks $280$ and $198$, respectively.
The normal rank is also 280, which means that the rank-completing dimension
is $k = 361-280 = 81$. Therefore, the perturbed pencil \eqref{pert}, modified by a rank-81 perturbation as in \cite{HMP19}, has 81 prescribed eigenvalues,
which we select ourselves by the particular choice of the update pencil $(D_A,D_B)$.
The perturbed pencil for this type of application has
no random eigenvalues (so $n_{\rm r} = 0$) and all 280 remaining eigenvalues of
$(\widetilde A, \widetilde B)$ are true eigenvalues of the original pencil.
Of these, 100 are finite true eigenvalues, corresponding to roots of the polynomial system; i.e., $n_{\rm f} = 100$, $n_{\rm i} = 180$.
The projection method developed in \cref{sec:proj} and \cref{subsec:proj} is more efficient for this example as it computes eigenvalues of a $280\times 280$ pencil, while the rank-completing
method from \cite{HMP19} works with a larger $361\times 361$ pencil; the difference is due to $k=81$ prescribed eigenvalues that are needed in the rank-completing method.
In \cref{sec:poly} we consider this application in more detail.
\end{example}

\section{Preliminaries}
\label{sec:prelim}
In this section, we gather some results that will be used in the rest of the paper. This section has a modest overlap with \cite{HMP19}, which is necessary to explain and prove the new results of this paper.
We start by clarifying the notion of genericity.
Let $\mathbb F$ denote one of the fields $\C$ or $\R$.
A set $\mathcal A\subseteq\mathbb F^m$ is called \emph{algebraic} if it is the set of common zeros of
finitely many polynomials in $m$ variables.
A set $\Omega\subseteq\mathbb F^m$ is called \emph{generic} if its complement is contained in a
proper algebraic subset of $\mathbb F^m$, i.e., an algebraic subset that is not $\mathbb F^m$.

In \cite{HMP19} the following concept of genericity of matrices has been used.
There, a set $\Omega\subseteq\C^{n,k}$ has been called generic if it can be
canonically identified with a generic subset of $\C^{nk}$.
In this paper, we will use a slightly more general concept which is necessary as
sometimes we have to explicitly deal with complex conjugation which is not a complex
polynomial map. Therefore, we view a complex number as the real pair of its real and imaginary parts and identify $\C^m$ canonically with the real
vector space $\R^{2m}$. We thus say that $\Omega\subseteq\C^{n,k}$
is \emph{generic} if it can be canonically identified with a generic subset of
$\R^{2nk}$. Clearly, a set $\Omega\subseteq\C^{n,k}$ that is generic
in the sense of \cite{HMP19} is also generic in the new sense, because any polynomial
in (say) $m$ complex variables can also be viewed as a polynomial in $2m$ real variables
(or more precisely as a pair of real polynomials in $2m$ real variables that are
obtained by taking the real and imaginary parts of the aforementioned polynomial which
still may have complex coefficients). Hence, all results from
\cite{HMP19} can also be expressed in terms of the new notion and we will do so
without further notice when quoting those results.

If $\Omega\subseteq\C^{n,k}$ is generic, then so is the set of all matrices from $\Omega$ that have
full rank. This immediately follows from two basic facts. Firstly, the set of all matrices
from $\C^{n,k}$ having full rank is a generic set, because an $n\times k$ matrix has rank less than $\rho := \max(n,k)$ if and only if all $\rho \times \rho$ minors (which are polynomials in the entries of the matrix, or, more precisely, in the real and imaginary parts of the entries) are zero.
Secondly, the intersection of finitely many generic sets is again a generic set. Therefore, we can always assume in the
following that a generic set $\Omega\subseteq\C^{n,k}$ only consists of matrices having full rank.

Next, we recall the well-known Kronecker canonical form (KCF) for matrix pencils; see, e.g., \cite{Gan59}.

\begin{theorem}[Kronecker canonical form]\label{thm:knf}
Let $A,B\in \C^{n,m}$. Then there exist nonsingular matrices
$P\in \C^{n,n}$ and $Q\in \C^{m,m}$ such that
\begin{equation}\label{eq:knf}
P\,(A-\lambda B)\,Q=\left[\begin{array}{cc} R(\lambda)&0\\ 0 & S(\lambda)\end{array}\right], \qquad
R(\lambda)=\left[\begin{array}{cc}J-\lambda I_r&0\\ 0&I_s-\lambda N\end{array}\right],
\end{equation}
where $J$ and $N$ are in Jordan canonical form with $N$ nilpotent. Furthermore,
\[
S(\lambda)=
{\rm diag}\big(L_{m_1}(\lambda), \dots, L_{m_k}(\lambda), \ L_{n_{1}}(\lambda)^\top, \dots, L_{n_{\ell}}(\lambda)^\top\big),
\]
where
$L_{j}(\lambda)=[0 \ \, I_{j}]-\lambda \, [I_{j} \ \, 0]$
is of size $j\times (j+1)$, and $m_i\ge 0$ for $i=1,\dots,k$,
and $n_i\ge 0$ for $i=1,\dots,\ell$.
\end{theorem}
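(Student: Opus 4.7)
The plan is to follow the classical peeling-and-reduction strategy originating with Kronecker and presented in detail by Gantmacher \cite{Gan59}. The idea is to strip off one singular block of the pencil at a time by exploiting a polynomial vector in the (right or left) null space of minimal degree, and then to handle the remaining regular pencil via the Weierstrass normal form.

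First I would dispose of the right singular part. If the columns of $A-\lambda B$ are linearly dependent over $\C(\lambda)$, choose a polynomial vector $x(\lambda)=x_0+\lambda x_1+\cdots+\lambda^{\varepsilon} x_{\varepsilon}$ of \emph{minimal} degree $\varepsilon$ satisfying $(A-\lambda B)\,x(\lambda)=0$. Expanding in powers of $\lambda$ yields the chain $A x_0=0$, $A x_{j}=B x_{j-1}$ for $1\le j\le\varepsilon$, and $B x_{\varepsilon}=0$. The key technical fact, which minimality of $\varepsilon$ is designed to deliver, is that the vectors $x_0,\dots,x_{\varepsilon}$ are linearly independent and that the vectors $B x_0,\dots,B x_{\varepsilon-1}$ are also linearly independent. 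Complete these to bases of $\C^{m}$ and $\C^{n}$, respectively, to obtain $P,Q$ that transform the pencil into a $2\times 2$ block form whose leading block is exactly $L_{\varepsilon}(\lambda)$.

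Next I would improve this from block upper-triangular to block diagonal. This is the step I expect to be the main obstacle: one must solve a Sylvester-type matrix equation to zero out the coupling block, and doing so consistently requires that no column of the remaining pencil admits a minimal-degree null-space polynomial of degree $<\varepsilon$. Minimality of $\varepsilon$ together with a dimension count on the augmented Sylvester operator furnishes precisely the solvability condition needed, so the off-diagonal coupling can be absorbed by further strict equivalence. After this splitting, the smaller pencil has strictly smaller size, and the process can be iterated to exhaust all right singular blocks $L_{m_i}(\lambda)$. Applying the same argument to $(A-\lambda B)^{\top}$ then peels off all left singular blocks $L_{n_i}(\lambda)^{\top}$.

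Finally, once both singular parts are removed, we are left with a \emph{regular} square pencil. For this case I would choose any $\mu\in\C$ that is not an eigenvalue, so that $A-\mu B$ is invertible, and put $M:=(A-\mu B)^{-1}B$. Writing $(A-\mu B)^{-1}(A-\lambda B)=I-(\lambda-\mu)M$ and passing to the Jordan canonical form of $M$ produces the block $J-\lambda I_{r}$ from the nonzero eigenvalues of $M$ (after a shift by $\mu$) and the block $I_{s}-\lambda N$ with $N$ nilpotent from the Jordan blocks of $M$ at $0$, corresponding to the infinite eigenvalues. Combining this Weierstrass reduction with the earlier peeling yields the block form \eqref{eq:knf}. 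Uniqueness of the indices $m_i$, $n_i$ and of the Jordan data, which is not asserted in the statement but underlies the later use of ``left/right minimal indices'', would follow from a standard computation of the rank sequences $\mathrm{rank}(A-\lambda B)^{j}$ viewed as matrices over $\C(\lambda)$ and over quotient rings at each eigenvalue.
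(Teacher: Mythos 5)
The paper does not prove this theorem at all: it is quoted as the classical Kronecker canonical form with a citation to Gantmacher \cite{Gan59}, so there is no in-paper argument to compare against. Your outline is precisely the standard Kronecker--Gantmacher proof from that reference --- minimal-degree polynomial null vectors to peel off $L_{\varepsilon}(\lambda)$ blocks, the independence lemma and the Sylvester-equation solvability (both resting on minimality of $\varepsilon$) to decouple, transposition for the left singular blocks, and the Weierstrass reduction via $(A-\mu B)^{-1}B$ for the regular part --- and it is correctly structured, with the two genuinely delicate steps (the independence of $x_0,\dots,x_\varepsilon$ and $Bx_0,\dots,Bx_{\varepsilon-1}$, and the solvability of the coupling equation) identified rather than glossed over, though of course they are asserted rather than carried out in this sketch.
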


The pencil $R(\lambda)$ in~\eqref{eq:knf} is called the regular part of $A-\lambda B$ and contains the eigenvalues
of $A-\lambda B$. The block $J-\lambda I_r$ contains the finite eigenvalues of $A-\lambda B$, whereas the block $I_s-\lambda N$ contains the eigenvalue infinity. For a finite eigenvalue $\lambda_0\in\mathbb C$
of geometric multiplicity $m$, we have precisely $m$ Jordan blocks associated with $\lambda_0$ in $J$, denoted by $J_{d_1}(\lambda_0),\dots,J_{d_m}(\lambda_0)$, where $d_1,\dots,d_m$ stand for the sizes of the Jordan blocks. We say that $\lambda_0$ is semisimple if $d_1=\cdots=d_m=1$, and simple if $m=1$ and $d_1=1$.
Analogously, simplicity and semisimplicity of the eigenvalue infinity
are defined via the number and sizes of the Jordan blocks in $N$.
The pencil $S(\lambda)$ is called the singular part of $A-\lambda B$ and contains right
singular blocks $L_{m_1}(\lambda),\dots,L_{m_k}(\lambda)$ and left singular blocks
$L_{n_{1}}(\lambda)^\top,\dots,L_{n_{\ell}}(\lambda)^\top$, where
 $m_1,\dots,m_k$ and
$n_{1},\dots,n_{\ell}$ are called the \emph{right} and \emph{left minimal indices} of the pencil, respectively. (Note that the values $m_i=0$ and $n_i=0$
are explicitly allowed here. In that case the blocks $L_{m_i}(\lambda)$ or $L_{n_i}(\lambda)^\top$ correspond to a zero column or row in
$\mathcal S(\lambda)$, respectively.)
We highlight that the regular and singular parts of $A-\lambda B$ are uniquely
determined (up to permutation of blocks), but the transformation matrices $P$ and $Q$ are not.

We say that a
subspace ${\cal M}$ is a \emph{reducing subspace} \cite{VD83} for the pencil $A-\lambda B$ if ${\rm dim}(A{\cal M} + B{\cal M}) =
{\rm dim}({\cal M}) - k$, where $k$ is the number of right singular blocks.
\emph{The minimal reducing subspace} ${\cal M}_{\rm RS}(A,B)$ is the
intersection of all reducing subspaces and is spanned
by the columns of $Q$ corresponding to the blocks $L_{m_1}(\lambda),\ldots,L_{m_k}(\lambda)$.
In a similar way ${\cal L}$ is a \emph{left reducing subspace} for the pencil $A-\lambda B$ if
${\rm dim}(A^*{\cal L}+B^*{\cal L})={\rm dim}({\cal L}) - \ell$, where $\ell$ is the number of left singular blocks,
and \emph{the minimal left reducing subspace} ${\cal L}_{\rm RS}(A,B)$ is the
intersection of all left reducing subspaces and is spanned by the columns of $P^*$ corresponding to the blocks
$L_{n_{1}}(\lambda)^\top,\dots,L_{n_{\ell}}(\lambda)^\top$.

The main results of~\cite{HMP19} on rank-completing perturbations of the form $A-\lambda B+\tau \, U\,(D_A-\lambda D_B)\,V^*$ of an $n\times n$ pencil of normal rank $n-k$ with matrices $U,V\in \C^{n,k}$ and $D_A,D_B\in \C^{k,k}$
are stated to hold ``generically with respect to the entries of $U$ and $V^*$'' which is equivalent to the
existence of a generic set $\Omega\subseteq\C^{n,k}\times\C^{k,n}$
such that the corresponding statements hold for all $(U,V^*)\in\Omega$.
Due to the different notion of genericity in this paper, it is no longer necessary
to distinguish between the entries of $V$ and $V^*$, because any polynomial in the real and imaginary parts of the entries of $V$ is also a polynomial in the real and
imaginary parts of the entries of $V^*$. Furthermore, it will turn out
to be convenient to express this genericity in a slightly different way, and the next lemma justifies that this is indeed possible.

\begin{lemma}\label{lem:gen}
Let $\Omega\subseteq\C^{n,k}\times\C^{n,k}$ be a generic set. Then there exists
a generic set $\Omega_2\subseteq\C^{n,k}$ with the property that for each $U\in\Omega_2$ there exists
a generic set $\Omega_3\subseteq\C^{n,k}$ such that $V\in\Omega_3$ implies $(U,V)\in\Omega$.
\end{lemma}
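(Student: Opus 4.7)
The plan is to exploit the definition of generic (the complement sits in a proper algebraic set of $\R^{4nk}$, after splitting entries of $U$ and $V$ into their real and imaginary parts) and then to decouple $U$ from $V$ by viewing a single defining polynomial as a polynomial in $V$ whose coefficients are polynomials in $U$.

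First I would make a standard reduction. Since $\Omega^c$ is contained in the common zero set of finitely many real polynomials in the real and imaginary parts of the entries of $U,V$, and since this algebraic subset is proper, at least one of those polynomials, call it $p(U,V)$, is not identically zero. Replacing the original algebraic set by the (larger) hypersurface $\{p=0\}$ keeps $\Omega^c$ inside it, so any $(U,V)$ with $p(U,V)\neq 0$ already lies in $\Omega$. From here on the task reduces to avoiding the single hypersurface $\{p=0\}$.

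Next I would decouple the variables. View $p$ as a polynomial in the $2nk$ real variables encoding $V$, whose coefficients are themselves real polynomials in the $2nk$ real variables encoding $U$. Because $p\not\equiv 0$, at least one such coefficient $c(U)$ is not the zero polynomial in $U$. I would then set
\[
\Omega_2 := \{\, U \in \C^{n,k} \,:\, c(U)\neq 0\,\},
\]
which is generic, since its complement is contained in the proper algebraic set $\{c=0\}$. For any fixed $U\in\Omega_2$, the specialization $p_U(V):=p(U,V)$ retains a nonzero coefficient and is hence a nonzero polynomial in (the real and imaginary parts of) $V$, so
\[
\Omega_3 := \{\, V \in \C^{n,k} \,:\, p_U(V)\neq 0\,\}
\]
is a generic subset of $\C^{n,k}$, and every $V\in\Omega_3$ satisfies $(U,V)\in\Omega$ by the reduction above.

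The only mildly delicate point, really a bookkeeping issue rather than a conceptual one, is the clean passage from the possibly several polynomials cutting out $\Omega^c$ to a single nonvanishing polynomial $p$, together with the need to keep the real/complex viewpoint of genericity adopted earlier in this section consistent across all three groups of variables (entries of $U$ alone, entries of $V$ alone, and entries of the pair $(U,V)$). Beyond that, no deeper machinery seems to be required.
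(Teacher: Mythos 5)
Your proof is correct and follows essentially the same route as the paper's: reduce the complement of $\Omega$ to the zero set of a single nonzero polynomial $p$, then decouple the variables so that $\Omega_2$ is the nonvanishing locus of a polynomial in $U$ alone and $\Omega_3$ is the nonvanishing locus of the specialization $p(U,\cdot)$. The only cosmetic difference is that you produce the auxiliary polynomial in $U$ as a nonzero coefficient of $p$ viewed as a polynomial in (the real and imaginary parts of) $V$, whereas the paper uses the evaluation $U\mapsto p(U,\widetilde T)$ at a suitably fixed $\widetilde T$; both are valid and yield the same conclusion.
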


\begin{proof}
By definition, the complement of $\Omega$ is contained in the set of common zeros of finitely many polynomials
in $4nk$ real variables. In fact, we may assume without loss of generality, that
the complement of $\Omega$ is contained in the set of zeros of just one
polynomial $p$ by choosing $p$ to be the product of the previous finitely many polynomials.
Then for $U\in \C^{n,k}$ and $T\in \C^{n,k}$
we have that $p(U,T)\ne 0$ implies $(U,T)\in\Omega$. (Here, $p(U,T)$ is interpreted as evaluating $p$ in the real and imaginary parts of the entries of $U$ and $T$.) For the moment, let $\widetilde T\in \C^{n,k}$ be fixed
such that $\widetilde p:\C^{n,k}\to\C$ with $\widetilde p(U):=p(U,\widetilde T)$ is not the zero polynomial.
Then the set $\Omega_2\subseteq\C^{n,k}$ of all $U\in \C^{n,k}$ for which $\widetilde p(U)\ne 0$ is,
by definition, a generic set. Now let $U\in\Omega_2$ be fixed. Then the set $\Omega_3'$ of all $T\in \C^{n,k}$
for which we have $p(U,T)\ne 0$ is a generic set, because by construction of $\Omega_2$ the polynomial
$\widehat p:T\mapsto p(U,T)$ is not the zero polynomial. The statement of the lemma now follows by taking
$\Omega_3\subseteq\C^{n,k}$ to be the set of all $V\in \C^{n,k}$ satisfying $\widehat p(V)\ne 0$ as this implies $p(U,V)\ne 0$.
\end{proof}

The following theorem combines the results from \cite[Thm.~4.3]{HMP19} and \cite[Thm.~4.6]{HMP19}, and adapts the
notion of genericity as outlined above and with the help of \cref{lem:gen}.

\begin{theorem}\label{thm:nfp}
Let $A-\lambda B$ be an $n\times n$ singular pencil of normal rank $n-k$ and let $D_A,D_B\in \C^{k,k}$ be such
that $D_A-\lambda D_B$ is diagonal and regular and all eigenvalues of $D_A-\lambda D_B$ are distinct from the eigenvalues of $A-\lambda B$.
Then there exists a generic set $\Omega_2\subseteq\C^{n,k}$ with the following property: for each $U\in\Omega_2$
there exists a generic set $\Omega_3\subseteq\C^{n,k}$ such that for all $V\in\Omega_3$
the following statements hold:
\begin{itemize}
\item[1)] For each $\tau\ne 0$, there exist nonsingular matrices $\widetilde P_\tau$ and $\widetilde Q_\tau$ such that
\begin{equation}\label{eq:nfp}
\widetilde P_\tau\,(A-\lambda B+\tau \, U\,(D_A-\lambda D_B)\,V^*)\,\widetilde Q_\tau=\left[\begin{array}{ccc}R(\lambda)&0&0\\
0& R_{\rm pre}(\lambda)&0\\ 0&0&R_{\rm ran}(\lambda)\end{array}\right],
\end{equation}
where $R(\lambda)$ is the regular part of the original pencil $A-\lambda B$,
$R_{\rm pre}(\lambda)=D_A-\lambda D_B$, and $R_{\rm ran}(\lambda)$ is regular and independent of $\tau$.
\end{itemize}
For the remaining items, let $\tau\ne 0$ be fixed.
\begin{itemize}
\item[2)] If $\lambda_0$ is an eigenvalue of $R(\lambda)$ and $x$ and $y$ are corresponding right and left eigenvectors
of $A-\lambda B+\tau \, U\,(D_A-\lambda D_B)\,V^*$, then $V^*x=0$ and $U^*y=0$.
\item[3)] If $\lambda_0$ is an eigenvalue of $R_{\rm pre}(\lambda)$ and $x$ and $y$ are corresponding right and left eigenvectors
of $A-\lambda B+\tau \, U\,(D_A-\lambda D_B)\,V^*$, then $V^*x\ne 0$ and $U^*y\ne 0$.
\item[4)] The eigenvalues of $R_{\rm ran}(\lambda)$ are all simple and distinct from the eigenvalues of $R(\lambda)$ and
$R_{\rm pre}(\lambda)$. If $\lambda_0$ is an eigenvalue of $R_{\rm ran}(\lambda)$ and $x$ and $y$ are corresponding right and left
eigenvectors of $A-\lambda B+\tau \, U\,(D_A-\lambda D_B)\,V^*$, then either $V^*x=0$ and $U^*y\ne 0$, or $V^*x\ne 0$ and $U^*y=0$.
\end{itemize}
\end{theorem}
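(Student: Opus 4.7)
The plan is to derive \cref{thm:nfp} directly from \cite[Thm.~4.3]{HMP19} and \cite[Thm.~4.6]{HMP19}, which together contain exactly the structural block decomposition asserted in item~1) and the eigenvector orthogonality dichotomies asserted in items~2)--4). The substantive content therefore transfers verbatim; only the formulation of genericity has to be adjusted, from the ``joint genericity in $(U,V^*)$ in the complex-polynomial sense'' of Part~I to the ``sequential genericity: first $U$, then $V$, in the real-polynomial sense'' used throughout this paper.

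First I would convert the generic set $\Omega_{\rm old}\subseteq\C^{n,k}\times\C^{k,n}$ that underlies the statements of \cite[Thms.~4.3 and 4.6]{HMP19} into a generic set $\Omega\subseteq\C^{n,k}\times\C^{n,k}$ in the new sense. As noted in the paragraph preceding \cref{lem:gen}, any complex polynomial in the entries of $V^*$ is, in particular, a pair of real polynomials in the real and imaginary parts of the entries of $V$ (conjugation is a real-polynomial operation on entries), so the set $\Omega_{\rm old}$, viewed as a subset of $\C^{n,k}\times\C^{n,k}$ via the identification $V^*\leftrightarrow V$, is generic in the new sense. Since each of items~1)--4) in \cite[Thms.~4.3 and 4.6]{HMP19} produces its own generic exceptional set, I would intersect the finitely many resulting sets; the intersection is again generic, and from now on $\Omega$ denotes this single generic set on which all four conclusions hold simultaneously.

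Next I would apply \cref{lem:gen} to this $\Omega$. This immediately yields the generic set $\Omega_2\subseteq\C^{n,k}$ and, for each $U\in\Omega_2$, the generic set $\Omega_3\subseteq\C^{n,k}$ required by the statement, with the property that $(U,V)\in\Omega$ whenever $U\in\Omega_2$ and $V\in\Omega_3$. For any such pair, \cite[Thm.~4.3]{HMP19} provides the nonsingular matrices $\widetilde P_\tau,\widetilde Q_\tau$ realizing the block-diagonal reduction in~\eqref{eq:nfp} with $R_{\rm ran}(\lambda)$ regular and independent of $\tau$, which is precisely item~1). Items~2), 3), and 4) --- namely $V^*x=0$ and $U^*y=0$ for eigenvectors associated with $R(\lambda)$; $V^*x\ne 0$ and $U^*y\ne 0$ for those associated with $R_{\rm pre}(\lambda)$; simplicity and distinctness of the eigenvalues of $R_{\rm ran}(\lambda)$ together with the ``exactly one of $V^*x,\,U^*y$ vanishes'' dichotomy --- are then the direct content of \cite[Thm.~4.6]{HMP19}.

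The only real obstacle is the bookkeeping in this genericity translation, and it is handled cleanly by \cref{lem:gen} together with the fact that complex polynomials are automatically real polynomials in the real and imaginary parts of their variables. In particular, the passage from joint to sequential genericity is lossless (a single pair $(\Omega_2,\Omega_3)$ serves all four items at once), and no new analytic input beyond Part~I is required.
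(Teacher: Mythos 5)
Your proposal is correct and follows essentially the same route as the paper, which gives no separate proof of \cref{thm:nfp} but presents it exactly as you do: as the combination of \cite[Thms.~4.3 and 4.6]{HMP19}, with the joint genericity in $(U,V^*)$ translated to the real-polynomial notion (so that $V^*$ and $V$ need not be distinguished) and then converted to the sequential form via \cref{lem:gen}. No further comment is needed.
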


\begin{remark}\rm
We note that in \cite[Thm.~4.3]{HMP19} it was claimed that the result is
true just assuming that the pencil $D_A-\lambda D_B$ is regular rather
than diagonal and regular, but in the proof it was implicitly assumed
that it has the form $\operatorname{diag}(\alpha_1,\dots,\alpha_k)-\lambda\operatorname{diag}(\beta_1,\dots,\beta_k)$.
However, this immediately generalizes to diagonalizable regular pencils
$D_A-\lambda D_B$, because the strict equivalence transformation that
simultaneously diagonalizes the two matrices can be put into $U$ and $V$
using the fact that generic subsets of $\mathbb C^{n,k}$ stay generic under
multiplication with an invertible matrix.
\end{remark}

\begin{remark}\label{rem16.8.19}\rm
It follows from the proof of \cref{thm:nfp} (which is \cite[Thm.~4.6]{HMP19}) that random eigenvalues with corresponding left eigenvectors
satisfying $U^*y= 0$ are generated by a perturbation of a singular block $L_{n_i}^\top$ in the Kronecker canonical form
of $A-\lambda B$ that corresponds to a nonzero left minimal index $n_i$, while random eigenvalues with corresponding right eigenvectors
satisfying $V^*x=0$ come from a perturbation of a singular block $L_{m_j}$ corresponding to a nonzero right minimal index $m_j$.
In particular, the sums $N$ and $M$ of the left minimal indices or right minimal indices, respectively, coincide with the numbers
of random eigenvalues of ``type $U^*y= 0$'' or ``type $V^*x= 0$'', respectively.
\end{remark}

\begin{remark}\label{rem:mrs}\rm As a side result of \cref{thm:nfp} we get from \eqref{pert} also
a basis for the minimal reducing subspace ${\cal M}_{\rm RS}(A,B)$ (see \cite{VD83}).
The basis
is composed of all right eigenvectors $x$ that belong either to a prescribed eigenvalue or to a random eigenvalue such that $U^*y\ne 0$.
In other words, if we collect all right eigenvectors $x$ from eigentriplets $(\lambda,x,y)$ of \eqref{pert} such that $U^*y\ne 0$, these
vectors form a basis for ${\cal M}_{\rm RS}(A,B)$. In a similar way, all left eigenvectors $y$ from eigentriplets $(\lambda,x,y)$ of \eqref{pert} such that $V^*x\ne 0$, form a basis for the left minimal reducing subspace ${\cal L}_{\rm RS}(A,B)$.
\end{remark}

\begin{remark}\rm
It is interesting to compare Theorem~\ref{thm:nfp} to the results from \cite{BolVD94} where it has been shown that adding particular rows
to a rectangular pencil $A_1-\lambda B_1$ with a nontrivial regular part can be used to create additional eigenvalues with values in desired locations while the other eigenvalues of the pencil remain unchanged. In terms of Theorem~\ref{thm:nfp} this corresponds to adding zero rows to the pencil to make it square and then considering a particular rank completing perturbation by replacing the zero rows with nonzero ones. This results in a perturbed pencil of the form
\[
\left[\begin{array}{c}A_1\\ 0\end{array}\right]
-\lambda \left[\begin{array}{c}B_1\\ 0\end{array}\right]
+
\left[\begin{array}{c}0\\ A_2\end{array}\right]
-\lambda \left[\begin{array}{c}0\\ B_2\end{array}\right]
=\left[\begin{array}{c}A_1\\ A_2\end{array}\right]
-\lambda \left[\begin{array}{c}B_1\\ B_2\end{array}\right].
\]
While Theorem~\ref{thm:nfp} indicates that not only for this special type of perturbations, but generically for all rank-completing perturbations the original eigenvalues of the pencil remain unchanged, the results in \cite{BolVD94} focus on how $A_2$ and $B_2$ can be chosen to place the newly generated ``random'' eigenvalues in desired locations.
\end{remark}

We can show that the set of random eigenvalues from \cref{thm:nfp} does not depend on the choice of $\tau$, $D_A$, or $D_B$. Note that the independency of $\tau$ has already been addressed in \cite[Thm.~4.6]{HMP19}.

\begin{lemma}\label{lemUV}
Let $A-\lambda B$ be an $n\times n$ singular pencil of normal rank $n-k$. Then
the random eigenvalues of the perturbed pencil \eqref{pert} depend only
on $U$ and $V$, but not on $\tau$, $D_A$, or $D_B$.
\end{lemma}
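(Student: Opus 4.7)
The plan is to characterize the random eigenvalues intrinsically in terms of $A$, $B$, $U$, $V$, bypassing the choice of $\tau$, $D_A$, $D_B$. Fix $U$ and $V$ satisfying the conclusions of \cref{thm:nfp} and any admissible $(\tau, D_A, D_B)$. By part 4 of that theorem, every random eigenvalue falls into one of two types: a type-R eigenvalue with right eigenvector $x$ satisfying $V^*x=0$ (and then $U^*y\ne 0$), or a type-L eigenvalue with left eigenvector $y$ satisfying $U^*y=0$ (and then $V^*x\ne 0$). I handle type R; the type-L case is symmetric after interchanging left and right throughout and swapping the roles of $U$ and $V$.

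The key observation is that if $V^*x=0$ then the perturbation term $\tau \, U(D_A-\lambda D_B)V^*$ annihilates $x$, so $x$ is a right eigenvector of the perturbed pencil \eqref{pert} at $\lambda_0$ if and only if $(A-\lambda_0 B)x=0$, independently of $\tau$, $D_A$, and $D_B$. Equivalently, $\lambda_0$ is a type-R random eigenvalue precisely when the $(n+k)\times n$ rectangular pencil
\begin{equation*}
\mathcal P_R(\lambda) := \begin{pmatrix} A-\lambda B \\ V^* \end{pmatrix}
\end{equation*}
has a nontrivial kernel at $\lambda_0$ and $\lambda_0$ is not a true eigenvalue of $A-\lambda B$. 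The forward direction uses the observation above applied to type-R random eigenvalues; for the reverse, any $x\ne 0$ with $\mathcal P_R(\lambda_0)x=0$ produces a right eigenvector of the perturbed pencil satisfying $V^*x=0$, which by \cref{thm:nfp} must correspond to either a true or a random eigenvalue, since prescribed eigenvectors satisfy $V^*x\ne 0$ by part 3.

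Hence the set of type-R random eigenvalues equals
\begin{equation*}
S_R=\{\lambda_0\in\C:\operatorname{rank}\mathcal P_R(\lambda_0)<n\}\setminus\{\text{true eigenvalues of }A-\lambda B\},
\end{equation*}
which depends only on $A$, $B$, and $V$. Symmetrically, the type-L random eigenvalues form a set $S_L$ determined only by $A$, $B$, and $U$. Since random eigenvalues are simple by part 4, the full set of random eigenvalues is $S_R\cup S_L$ and is manifestly independent of $\tau$, $D_A$, $D_B$. The main technical point to verify is that $\mathcal P_R$ has normal column rank $n$ so that $S_R$ is finite and well defined; this holds generically in $V$, because for any $\zeta$ with $\operatorname{rank}(A-\zeta B)=n-k$, the restriction $V^*|_{\ker(A-\zeta B)}$ is injective for generic $V$, yielding full column rank $n$ at $\zeta$.
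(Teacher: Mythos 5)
Your proof is correct and rests on the same key observation as the paper's: if $V^*x=0$ (resp.\ $U^*y=0$) then the perturbation term $\tau\,U(D_A-\lambda D_B)V^*$ annihilates $x$ (resp.\ is annihilated by $y^*$), so the relevant eigenvalue equation does not involve $\tau$, $D_A$, or $D_B$. The paper's proof consists essentially of only that one observation, whereas you additionally prove the converse inclusion and package the result as an intrinsic characterization of the type-R and type-L random eigenvalues via the bordered pencils $\mathcal P_R$ and its left analogue, which makes your argument the more complete of the two.
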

\begin{proof}
Suppose that $\mu$ is a random eigenvalue of \eqref{pert} with a
right eigenvector $x$ and a left eigenvector $y$ such that
$V^*x=0$ and $U^*y\ne 0$. Then clearly $(A-\mu B)\,x + \tau\, U(D_A-\mu D_B)V^*x=0$ independently of $\tau$, $D_A$, and $D_B$.
Analogously, when $V^*x\ne 0$ and $U^*y=0$, then
$y^*(A-\mu B) + \tau\, y^*U(D_A-\mu D_B)V^*=0$
independently of $\tau$, $D_A$, and $D_B$.
\end{proof}

In \cref{thm:nfp} we have shown that left and right eigenvectors of true eigenvalues of \eqref{pert} are orthogonal to $U$ and $V$, respectively.
By reviewing the proof of \cite[Thm.~4.3]{HMP19} the result on orthogonality can be extended to root vectors of true eigenvalues as well, that is, for the case when true eigenvalues are multiple and not semisimple.
To show this, we first quote another key result from \cite{HMP19}.

\begin{proposition}\mbox{\rm\cite[Prop.~4.2]{HMP19}}\label{prop4.2}
Let $A-\lambda B$ be an $n\times m$ singular matrix pencil having at least $k$ left minimal indices,
and let $U\in \C^{n,k}$. Then there exists a generic set $\Omega_1\subseteq\C^{n,k}$ such that for
each $U\in\Omega_1$ there exist nonsingular matrices $P$, $Q$ such that
\[
P\,(A-\lambda B)\,Q=\left[\begin{array}{cc}R(\lambda)&0\\ 0& S(\lambda)\end{array}\right]
\quad\mbox{and}\quad
PU=\left[\begin{array}{c}0\\ \widetilde U\end{array}\right],
\]
where $R(\lambda)$ and $S(\lambda)$ are the regular and singular part of $A-\lambda B$,
respectively, and $PU$ is partitioned conformably with $P\,(A-\lambda B)\,Q$.
\end{proposition}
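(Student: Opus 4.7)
The idea is to first reduce $A-\lambda B$ to Kronecker canonical form and then exploit the ``gauge freedom'' provided by the left singular blocks to absorb the top block of $PU$.

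First, by \cref{thm:knf} choose nonsingular $P_0,Q_0$ so that $P_0(A-\lambda B)Q_0 = \operatorname{diag}(R(\lambda),S(\lambda))$, and partition $P_0U=\left[\begin{array}{c}U_R\\U_S\end{array}\right]$ conformably, with $U_R\in\C^{r,k}$. By the hypothesis of at least $k$ left minimal indices, a permutation of blocks in $S(\lambda)$ yields $S(\lambda)=\operatorname{diag}(L_{n_1}^\top,\dots,L_{n_k}^\top,S_{\mathrm{rest}}(\lambda))$; partition $U_S$ into corresponding row blocks $U^{(1)},\dots,U^{(k)},U_{\mathrm{rest}}$. The goal becomes: for generic $U$, find a second strict equivalence $(\widetilde P,\widetilde Q)$ preserving the form $\operatorname{diag}(R,S)$ such that $\widetilde P$ kills the top block of $P_0U$.

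Next, I build such equivalences one left singular block at a time. Fix $j$ and look at the subpencil $\operatorname{diag}(R(\lambda),L_{n_j}^\top(\lambda))$. I seek ``shears''
\[
\widetilde P^{(j)}=\left[\begin{array}{cc}I_r & W\\ 0 & I_{n_j+1}\end{array}\right],\qquad
\widetilde Q^{(j)}=\left[\begin{array}{cc}I_r & Y\\ 0 & I_{n_j}\end{array}\right],
\]
that leave the block-diagonal pencil invariant, i.e., $R(\lambda)\,Y+W\,L_{n_j}^\top(\lambda)=0$. Writing $R(\lambda)=A_R-\lambda B_R$ and $L_{n_j}^\top(\lambda)=E_{n_j}-\lambda F_{n_j}$ and equating coefficients of $\lambda^0$ and $\lambda^1$ yields two \emph{constant} matrix equations. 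The polynomial left null vector $(1,\lambda,\dots,\lambda^{n_j})^\top$ of $L_{n_j}^\top$ dictates a recursion $y_{i+1}=B_R^{-1}A_R\,y_i$ (after a harmless shift to avoid infinite eigenvalues in $R$) between successive columns of $Y$, so that the whole family is parameterized by $y^{(j)}\in\C^r$ (the first column of $Y$), with $W$ then determined. The induced modification of the top block of $P_0U$ is $W\,U^{(j)}$, a linear function of $y^{(j)}$ whose matrix depends polynomially on the entries of $R$ and $U^{(j)}$.

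Composing these shears for $j=1,\dots,k$ yields a linear map $\Phi:\C^{rk}\to\C^{r\times k}$ whose image is the set of achievable modifications of $U_R$. Since $\dim(\operatorname{domain})=\dim(\operatorname{codomain})=rk$, surjectivity is equivalent to $\det\Phi\neq 0$, a single polynomial condition in the entries of $A$, $B$, and $U$. To see this polynomial does not vanish identically, I would exhibit one explicit choice of $U$ (for instance one for which each $U^{(j)}$ selects a carefully chosen standard basis vector) such that the associated $\Phi$ is invertible; the set $\Omega_1$ where $\det\Phi\neq 0$ is then generic. For $U\in\Omega_1$, solve $\Phi(y^{(1)},\dots,y^{(k)})=-U_R$, compose the shears, and set $P=\widetilde P^{(k)}\cdots\widetilde P^{(1)}P_0$, $Q=Q_0\widetilde Q^{(1)}\cdots\widetilde Q^{(k)}$, giving the required factorization.

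\textbf{Main obstacle.} The subtle step is the shear construction: producing the pair $(\widetilde P^{(j)},\widetilde Q^{(j)})$ with \emph{constant-coefficient} entries (not rational in $\lambda$), together with an honest count of the resulting degrees of freedom. Degree matching between $R(\lambda)$ and $L_{n_j}^\top(\lambda)$ (both of degree one) is what makes the compensation possible, but making sure the polynomial dependence of the shear on the data of $R$ and $U^{(j)}$ is clean enough to carry through the genericity argument requires care. A minor technicality is the possible presence of infinite eigenvalues in $R(\lambda)$, which can be sidestepped by a preliminary substitution $\lambda\mapsto\lambda+c$.
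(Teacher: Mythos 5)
You should first note that this paper does not actually prove the statement: it is quoted verbatim from Part I (\cite[Prop.~4.2]{HMP19}), so there is no in-document proof to compare against. Judged on its own merits, your strategy is the right one and is essentially the classical argument: the strict equivalences preserving the block form $\operatorname{diag}(R(\lambda),S(\lambda))$ include exactly the shears with off-diagonal block coupling $R$ to each $L_{n_j}^\top$, the compatibility condition $R(\lambda)Y+WL_{n_j}^\top(\lambda)=0$ has (for each $j$) a solution space of dimension exactly $r$, and composing the $k$ shears gives a linear map $\Phi:\C^{rk}\to\C^{r\times k}$ between spaces of equal dimension whose determinant is a polynomial in the entries of $U$; genericity of $\{\det\Phi\ne 0\}$ then finishes the argument. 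Your dimension count is correct (I verified: the two constant-coefficient equations $A_Ry_i=-w_i$ and $B_Ry_i=-w_{i-1}$ leave exactly one free vector in $\C^r$ per block), and the different shears compose additively since the off-diagonal blocks occupy disjoint column positions.

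Two points need repair before this is a complete proof. First, the nonvanishing witness for $\det\Phi$ is only promised; it should be written out, and your suggested choice does work: take $P_0U$ so that the row of $U^{(j)}$ indexed by $w_0$ equals $e_j^\top$ and all other rows of $P_0U$ vanish, whence $\Phi(y^{(1)},\dots,y^{(k)})=-B_R\,[\,y^{(1)}\ \cdots\ y^{(k)}\,]$, which is bijective. Second, the claimed fix for infinite eigenvalues is wrong as stated: the substitution $\lambda\mapsto\lambda+c$ replaces $A_R$ by $A_R-cB_R$ but leaves $B_R$ (the coefficient you need to invert) unchanged. Either run the recursion from the other end, parameterizing by $y_{n_j}$ and using $y_{i}=A_R^{-1}B_Ry_{i+1}$ after a shift making $A_R$ invertible, or split $R=\operatorname{diag}(J-\lambda I,\,I-\lambda N)$ and treat the two parts with the two directions of the recursion, or use a genuine M\"obius substitution. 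You should also note the degenerate case $n_j=0$, where $Y$ is empty and the equation is vacuous, so $W=w_0\in\C^r$ is entirely free; your parameterization by ``the first column of $Y$'' does not literally apply there, although the $r$-dimensional freedom (and hence the counting) survives. With these repairs the proof is complete.
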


\begin{corollary}\,[Extension of \cite[Thm.~4.3]{HMP19}]\label{cor:root}
Under the same assumptions as is \cref{thm:nfp}, the following assertions
additionally hold for the perturbed pencil \eqref{pert}.
\begin{enumerate}
\item[1)] If $\lambda_0$ is a finite eigenvalue of $R(\lambda)$, then, for $d\ge 1$:
\begin{enumerate}
\item If the vector $x_d$ is a right root vector of height $d$ for $\lambda_0$, i.e.,
there exist nonzero vectors $x_1,\ldots,x_{d-1}$ such that
$(\widetilde A-\lambda_0\widetilde B)x_1=0$ and $(\widetilde A-\lambda_0\widetilde B)x_{i+1}=\widetilde B x_i$ for $i=1,\ldots,d-1$,
then $V^*x_d=0$.
\item If the vector $y_d$ is a left root vector of height $d$ for $\lambda_0$, i.e.,
there exist nonzero vectors $y_1,\ldots,y_{d-1}$ such that
$y_1^*(\widetilde A-\lambda_0\widetilde B)=0$ and $y_{i+1}^*(\widetilde A-\lambda_0\widetilde B)=y_i^*\widetilde B $ for $i=1,\ldots,d-1$,
then $U^*y_d=0$.
\end{enumerate}
\item[2)] If $\infty$ is an eigenvalue of $R(\lambda)$, then, for $d\ge 1$:
\begin{enumerate}
\item If the vector $x_d$ is a right root vector of height $d$ for $\infty$, i.e.,
there exist nonzero vectors $x_1,\ldots,x_{d-1}$ such that
$\widetilde B x_1=0$ and $\widetilde B x_{i+1}=\widetilde A x_i$ for $i=1,\ldots,d-1$, then $V^*x_d=0$.
\item If the vector $y_d$ is a left root vector of height $d$ for $\infty$, i.e.,
there exist nonzero vectors $y_1,\ldots,y_{d-1}$ such that
$y_1^* \widetilde B=0$ and $y_{i+1}^*\widetilde B=y_i^*\widetilde A$ for $i=1,\ldots,d-1$, then $U^*y_d=0$.
\end{enumerate}
\end{enumerate}
\end{corollary}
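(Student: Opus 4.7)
The plan is to revisit the block-diagonal reduction used in the proof of \cite[Thm.~4.3]{HMP19} and to track what it implies along an entire Jordan chain rather than only for the leading eigenvector. That reduction produces nonsingular matrices $\widetilde P$, $\widetilde Q$ such that
\[
\widetilde P\,(\widetilde A - \lambda \widetilde B)\,\widetilde Q = \operatorname{diag}\bigl(R(\lambda),\,R_{\rm pre}(\lambda),\,R_{\rm ran}(\lambda)\bigr),
\]
\emph{together} with two zero-structures: $\widetilde P\,U$ has zero rows in the block corresponding to $R(\lambda)$, and $V^*\widetilde Q$ has zero columns in the same block. The first of these is precisely \Cref{prop4.2} applied to the left minimal indices of $A-\lambda B$; the second is its dual applied to the right minimal indices, and both can be arranged simultaneously, as already done in the proof of \cite[Thm.~4.3]{HMP19}.

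Fix now a finite eigenvalue $\lambda_0$ of $R(\lambda)$ and a right Jordan chain $x_1,\dots,x_d$ of $\widetilde A-\lambda\widetilde B$ at $\lambda_0$. Set $\xi_i := \widetilde Q^{-1}x_i$ and partition each $\xi_i = (\xi_i^{(1)},\xi_i^{(2)},\xi_i^{(3)})$ conformably with the three diagonal blocks. The chain relations decouple into three independent block-wise systems. Since the assumption on $D_A-\lambda D_B$ together with item~4) of \Cref{thm:nfp} guarantees that $\lambda_0$ is an eigenvalue of neither $R_{\rm pre}(\lambda)$ nor $R_{\rm ran}(\lambda)$, the matrices $R_{\rm pre}(\lambda_0)$ and $R_{\rm ran}(\lambda_0)$ are invertible. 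A straightforward induction on the chain index $i$ then forces $\xi_i^{(2)}=0$ and $\xi_i^{(3)}=0$ for every $i$. In particular $\xi_d$ lives entirely in the first block, so $V^*x_d = (V^*\widetilde Q)\,\xi_d = 0$, because $V^*\widetilde Q$ vanishes on that block by construction.

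The left-sided assertion is dual: writing $\eta_i^* := y_i^*\widetilde P^{-1}$, the left chain relations decouple across the three diagonal blocks, invertibility of $R_{\rm pre}(\lambda_0)$ and $R_{\rm ran}(\lambda_0)$ confines the chain to the first block, and $U^*y_d = (\widetilde P\,U)^*\eta_d = 0$ follows since $\widetilde P\,U$ is zero in that block. The case $\lambda_0=\infty$ is handled identically by replacing the chain equations by their $\widetilde B$-versions $\widetilde B x_1=0$, $\widetilde B x_{i+1}=\widetilde A x_i$ (and the left analogues); in the block-diagonal form only the nilpotent piece $I_s-\lambda N$ inside $R(\lambda)$ contributes $\infty$ as an eigenvalue, while $R_{\rm pre}(\lambda)$ does not have $\infty$ as eigenvalue (it must be disjoint from the spectrum of $A-\lambda B$) and neither does $R_{\rm ran}(\lambda)$, so the second and third components of the chain again vanish.

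I expect the main obstacle to be a clean justification that the \emph{same} pair $(\widetilde P,\widetilde Q)$ yields both zero-structures simultaneously, because \Cref{prop4.2} as stated only produces the zero structure of $\widetilde P\,U$. This is, however, exactly the combined reduction already worked out in the proof of \cite[Thm.~4.3]{HMP19}; for the present extension it therefore suffices to quote that reduction and append the inductive chain argument above, and no further genericity arguments beyond those already used in \Cref{prop4.2} and \Cref{lem:gen} are required.
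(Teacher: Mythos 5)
There is a genuine gap, and it sits exactly at the point you flagged as ``the main obstacle'' and then dismissed. \Cref{prop4.2} is strictly one-sided: it yields a pair $(P,Q)$ block-diagonalizing the \emph{unperturbed} pencil together with $PU=\smtxa{c}{0\\ U_2}$, but it says nothing about $Q^*V$. The paper's own proof of this corollary works with precisely that one-sided structure (see \eqref{eq:help1}, where $Q^*V=\smtxa{c}{V_1\\ V_2}$ with $V_1$ in general nonzero), and as a consequence the perturbed pencil in these coordinates is only block \emph{lower triangular}, with off-diagonal block $\tau\,U_2(D_A-\lambda D_B)V_1^*$, not block diagonal. Your claim that one can simultaneously arrange $V^*\widetilde Q$ to vanish on the $R(\lambda)$-block of a transformation that block-diagonalizes the perturbed pencil is not established anywhere in \cite{HMP19} or in this paper; worse, it is essentially equivalent to what you are trying to prove, since ``$V^*\widetilde Q$ vanishes on the first block'' says exactly that $V^*$ annihilates the entire right root subspace of the true eigenvalues, i.e., parts 1(a) and 2(a) of the corollary. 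So the step $V^*x_d=(V^*\widetilde Q)\,\xi_d=0$ is circular as written. (Note also that the block-diagonalizing matrices $\widetilde P_\tau,\widetilde Q_\tau$ of \cref{thm:nfp}~1) are obtained from $(P,Q)$ by a further transformation eliminating the $(2,1)$-block; this preserves the zero structure of $PU$ but modifies the first block of $V^*Q$ by an uncontrolled term, so one cannot get the two-sided structure that way either.)

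Your left-sided argument, by contrast, is sound and is essentially the paper's: with the block lower triangular form and the invertibility of $R_{\rm new}(\lambda_0)=S(\lambda_0)+\tau U_2(D_A-\lambda_0 D_B)V_2^*$ (which follows because $\lambda_0$ is not an eigenvalue of $R_{\rm pre}$ or $R_{\rm ran}$), the induction along the left chain forces the second block component of each $y_i^*P^{-1}$ to vanish, whence $U^*y_i=0$. The repair for the right-sided statements is not to seek a two-sided reduction but to dualize: apply the already-proved left-sided result to the pencil $A^*-\lambda B^*$ with the perturbation $V(D_A^*-\lambda D_B^*)U^*$, which interchanges the roles of $U$ and $V$ and of left and right root vectors. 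Part 2) then follows by applying part 1) to the reversed pencil $\lambda A-B$, as you correctly indicate for the chain equations. With that substitution your proof matches the paper's.
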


\begin{proof}

Applying \cref{prop4.2}, there exist nonsingular matrices $P$, $Q$ such that
\begin{equation}\label{eq:help1}
P\,(A-\lambda B)\,Q = \left[\begin{array}{cc}R(\lambda)&0\\ 0&S(\lambda)\end{array}\right],\quad
PU=\left[\begin{array}{c}0\\ U_2\end{array}\right],\quad Q^*V=\left[\begin{array}{c}V_1\\ V_2\end{array}\right],
\end{equation}
where 
$R(\lambda)$ and $S(\lambda)$ are the regular and singular parts of $A-\lambda B$, respectively, and $U$ and $V$ are partitioned conformably with $A-\lambda B$.
The perturbed pencil \eqref{pert} takes the form
\[
P\,(\widetilde A-\lambda \widetilde B)\,Q = \left[\begin{array}{cc}R(\lambda) & 0\\
\tau \, U_2\,(D_A-\lambda D_B)\,V_1^* & R_{\rm new}(\lambda)\end{array}\right],
\]
where
\[
R_{\rm new}(\lambda):=S(\lambda)+\tau \, U_2\,(D_A-\lambda D_B)\,V_2^*.
\]
First we will show 1(b).
We partition $y_i^*P^{-1}=[y_{i1}^*\ \ y_{i2}^*]$ conformably with \eqref{eq:help1} for $i=1,\ldots,d$.
Since $\lambda_0$ is not an eigenvalue of $R_{\rm new}(\lambda)$
we obtain from $y_1^*(\widetilde A-\lambda_0 \widetilde B)=0$ that $y_{12}=0$.
Now we can show that $y_{i2}=0$ implies $y_{i+1,2}=0$ for $i=1,\ldots,d-1$.
If we denote $A_{\rm new}-\lambda B_{\rm new}:=R_{\rm new}(\lambda)$,
 we get from
$y_{i+1}^*(\widetilde A-\lambda_0\widetilde B)= y_i^*\widetilde B$ that
\[
[\times\ \ y_{i+1,2}^*R_{\rm new}(\lambda_0)] = y_{i+1}^*P^{-1}P\,(\widetilde A-\lambda_0\widetilde B)\,Q = y_i^*P^{-1}P\widetilde BQ = [\times\ \ y_{i2}^*B_{\rm new}],
\]
where $\times$ denotes the block that is not important.
It follows from $y_{i2}=0$ and the nonsingularity of $R_{\rm new}(\lambda_0)$ that $y_{i+1,2}=0$.
Consequently, we have $y_i^*U=y_i^*P^{-1}PU=0$ for $i=1, \ldots, d$.

To show 1(a), we apply the already proved part 1(b) to the
pencil $A^*-\lambda B^*$ and the perturbation $V\,(D_A^*-\lambda D_B^*)\,U^*$.
To show 2) we apply the already proved part 1) to the pencil $\lambda A-B$.
\end{proof}

We are now in the position to explore new alternatives to the rank-completing perturbation approach.

\section{Projections of singular pencils}\label{sec:proj}
In this section, we will develop the theory that leads to Algorithm~1 in the next
section. To this end, consider a given singular pencil $A-\lambda B$ and
matrices $U$, $V$, $D_A$, $D_B$ as in \cref{thm:nfp}. Then choosing matrices
$U_\perp$ and $V_\perp$ whose columns form bases for the orthogonal
complement of the ranges of $U$ and $V$, respectively, we can build
an equivalent pencil
\begin{equation}\label{coord}
\widehat A-\lambda \widehat B:= [U \ \, U_{\perp}]^*\,(A - \lambda B)\,[V \ \, V_{\perp}]
= \left[
\begin{array}{cc}
U^*(A-\lambda B)V & U^*(A-\lambda B)V_{\perp} \\[1mm]
U_{\perp}^*(A-\lambda B)V & U_{\perp}^*(A-\lambda B)V_{\perp}
\end{array}
\right].
\end{equation}
A key observation for the development of the new methods
is that with respect to \eqref{coord},
the perturbed pencil $A-\lambda B+\tau \, U\,(D_A-\lambda D_B)\,V^*$ only differs from the original pencil
by an extra term $\tau \, (D_A-\lambda D_B)$ in the $(1,1)$-block.
The following simple result shows that under mild additional assumptions
it is possible to extract the true eigenvalues of the pencil $A-\lambda B$ from
the pencil $U_{\perp}^*(A-\lambda B)V_{\perp}$
of size $(n-k)\times (n-k)$, which ultimately leads to Algorithm~1.

\begin{proposition}\label{prop_proj}
Let $A-\lambda B$ be a complex $n\times n$ singular pencil of normal rank $n-k$
such that all its eigenvalues are semisimple. Furthermore, let $U$, $V$, $D_A$,
$D_B$ satisfy the hypotheses of \cref{thm:nfp} and assume that the
$(n-k)\times (n-k)$ pencil $A_{22}-\lambda B_{22}:=U_{\perp}^*(A-\lambda B)\,V_{\perp}$ from \eqref{coord} is regular. Then,
the eigenvalues of $A_{22}-\lambda B_{22}$ are precisely:
\begin{enumerate}
\item[a)] the random eigenvalues of \eqref{pert} with the same $U$ and $V$;
\item[b)] the true eigenvalues of $A-\lambda B$.
\end{enumerate}
\end{proposition}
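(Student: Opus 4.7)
The plan is to pass to the block coordinates of \eqref{coord}, use \cref{thm:nfp} to identify true and random eigenvalues of the perturbed pencil with eigenvalues of the $(2,2)$-block, and close the argument by a dimension count. The key setup observation is that since $U_\perp^* U = 0$ and $V^* V_\perp = 0$, the perturbation $\tau U(D_A-\lambda D_B)V^*$ transforms into a block matrix whose only nonzero entry is in position $(1,1)$; hence the $(2,2)$-block of the perturbed pencil in these coordinates is exactly $A_{22}-\lambda B_{22}$.

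For the containment direction, first take a true eigenvalue $\lambda_0$ of $A-\lambda B$. By \cref{thm:nfp}(2) there is a right eigenvector $x\ne 0$ of the perturbed pencil with $V^*x=0$, so $x=V_\perp x_2$ for some $x_2\ne 0$. Plugging into $(\widetilde A-\lambda_0\widetilde B)x=0$ and using that the perturbation term $\tau U(D_A-\lambda_0 D_B)V^*x$ vanishes, I obtain $(A-\lambda_0 B)V_\perp x_2=0$; multiplication by $U_\perp^*$ on the left then yields $(A_{22}-\lambda_0 B_{22})x_2=0$. Now take a random eigenvalue $\mu$. By \cref{thm:nfp}(4) either $V^*x=0$, in which case the same argument applies, or $U^*y=0$ for the left eigenvector $y$; in the latter case write $y=U_\perp y_2$ and note $y^*(\widetilde A-\mu\widetilde B)V_\perp=y^*(A-\mu B)V_\perp$ (since $V^*V_\perp=0$), which vanishes, so $y_2^*(A_{22}-\mu B_{22})=0$.

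To close the proof by a dimension count, I use the semisimplicity hypothesis. The true eigenvalues of $A-\lambda B$ contribute a total algebraic multiplicity of $r$, equal to their geometric multiplicity, while the $N+M$ random eigenvalues are simple and distinct from the true ones. Because $V_\perp$ has full rank, the map $x_2\mapsto V_\perp x_2$ is injective, so linearly independent right eigenvectors of $A-\lambda B$ at a true $\lambda_0$ lift to linearly independent null vectors of $A_{22}-\lambda_0 B_{22}$; the same argument applies on the left via $U_\perp$. Hence each true eigenvalue appears in $A_{22}-\lambda B_{22}$ with at least its original multiplicity, and each random eigenvalue appears at least once. Summing gives at least $r+(N+M)=n-k$ eigenvalues of $A_{22}-\lambda B_{22}$, and since by hypothesis this pencil is regular of size $n-k$, it has exactly $n-k$ eigenvalues, forcing equality everywhere.

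The main delicate point is this final bookkeeping: without the semisimplicity assumption one would only control \emph{geometric} multiplicities in $A-\lambda B$, which might be strictly smaller than the algebraic ones and would therefore break the count; the semisimplicity is what allows the inequality of multiplicities to be pinched into an equality. A secondary point worth flagging is that the two alternatives in \cref{thm:nfp}(4) for random eigenvalues must be handled symmetrically, via right eigenvectors in one case and left eigenvectors in the other.
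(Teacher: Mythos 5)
Your proof is correct and follows essentially the same route as the paper's: use \cref{summry}/\cref{thm:nfp} to see that eigenvectors of true and random eigenvalues of the perturbed pencil lie in the ranges of $V_\perp$ (right) or $U_\perp$ (left), descend to $A_{22}-\lambda B_{22}$, and finish by counting against the $n-k$ eigenvalues of the regular projected pencil. The only difference is that you spell out the multiplicity bookkeeping (where semisimplicity is used) that the paper compresses into the phrase ``a simple counting argument,'' which is a welcome clarification rather than a deviation.
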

\begin{proof}
We know from \cref{summry} and \cref{lemUV} that true eigenvalues of
$A-\lambda B$ and random eigenvalues of \eqref{pert} are independent of
$\tau,D_A$, and $D_B$.
Let $\mu$ be a true or a random eigenvalue of \eqref{pert}.
It follows from \cref{summry} that then
the right eigenvector is $V_{\perp} s$ for
a nonzero $s\in \C^{n-k}$ or the left eigenvector
is $U_{\perp} t$ for a nonzero $t\in \C^{n-k}$ (both statements are true for a true eigenvalue and exactly one of
the statements is true for a random eigenvalue). Then $(A_{22}-\mu B_{22})s=0$ or $t^*(A_{22}-\mu B_{22})=0$, therefore
$\mu$ is an eigenvalue of
$A_{22}-\lambda B_{22}$.
We know from \cref{summry} that \eqref{pert} altogether has $n-k$ eigenvalues of type a) and b).
By a simple counting argument this gives all eigenvalues of $A_{22}-\lambda B_{22}$.
\end{proof}

\cref{prop_proj} suggests to extract the finite eigenvalues of a singular pencil
by projection to a regular pencil of smaller size---an approach that is followed up in Algorithm~1 in the next section.
However, for a rigorous justification of that
algorithm, we have to prove that the projected pencil
$U_{\perp}^*(A-\lambda B)V_{\perp}$ is generically regular.
The challenge here is
that genericity in \cref{thm:nfp} is expressed in terms of the matrices
$U$ and $V$ instead of the matrices $W:=U_\perp$ and $Z:=V_\perp$, which
depend on the choice of $U$ and $V$ although the latter matrices are no longer
needed to form the projected pencil. This challenge is taken care
of in the next result which is the main theoretical result in the
paper and shows that the algorithm will also
work for rectangular pencils and that the assumption of semisimplicity of
eigenvalues in \cref{prop_proj} can be dropped.

\begin{theorem}\label{main}
Let $A-\lambda B$ be a complex $n\times m$ matrix pencil having normal rank $n-k$ and assume $n \ge m$.
Then there exists a generic set $\Omega\subseteq\C^{n,n-k}$ with the property that for each $W\in\Omega$ there
exists a generic set $\Omega'\subseteq\C^{n,n-k}$ such that for all
\begin{equation}\label{eq:11.8.19}
\widehat Z=\left[\begin{array}{c}Z\\ Z'\end{array}\right]\in\Omega',\quad Z\in \C^{m,n-k},\ Z'\in \C^{n-m,n-k}
\end{equation}
the following statements hold:
\begin{enumerate}
\item The $(n-k)\times(n-k)$ pencil $W^*(A-\lambda B)Z$ is regular and has the Kronecker canonical form
\[
\left[\begin{array}{cc}R(\lambda)&0\\ 0&R_{\rm ran}(\lambda)\end{array}\right],
\]
where $R(\lambda)$ coincides with the regular part of $A-\lambda B$ and where all eigenvalues of $R_{\rm ran}(\lambda)$ are simple
and distinct from the eigenvalues of $R(\lambda)$.
\item Let the columns of $W_\perp,\widehat Z_\perp\in \C^{n,k}$ form
bases of the orthogonal complements of the ranges of $W$ and $\widehat Z$, respectively, and let
$Z_\perp\in \C^{m,k}$ denote the upper $m\times k$ submatrix of $\widehat Z_\perp$. Furthermore, let
$\lambda_0$ be an eigenvalue of $W^*(A-\lambda B)Z$ with corresponding left eigenvector $y$ and right eigenvector $x$.
If $\lambda_0\in \C$, then $\lambda_0$ is an eigenvalue of $A-\lambda B$, i.e., of $R(\lambda)$, if and only if
$y^*W^*(A-\lambda_0 B)Z_\perp=0$ and $W_\perp^*(A-\lambda_0 B)Zx= 0$. If $\lambda_0=\infty$ then $\lambda_0$ is an eigenvalue of
$A-\lambda B$, i.e., of $R(\lambda)$, if and only if
$y^*W^*BZ_\perp=0$ and $W_\perp^*BZx= 0$.
\end{enumerate}
\end{theorem}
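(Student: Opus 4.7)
The plan is to reduce \cref{main} to \cref{thm:nfp} by identifying the projected pencil $W^*(A-\lambda B)Z$ with the $(2,2)$-block of a suitable change of basis applied to a rank-completing perturbation. First I would embed the rectangular pencil into a square one by padding: set $\widehat A=[A \ \ 0_{n\times(n-m)}]$ and $\widehat B=[B \ \ 0]$. Inspecting the Kronecker canonical form, $\widehat A-\lambda \widehat B$ has the same normal rank $n-k$, the same regular part $R(\lambda)$, the same left minimal indices, and $n-m$ additional right minimal indices equal to zero. Since $\widehat A\widehat Z=AZ$ and $\widehat B\widehat Z=BZ$ for $\widehat Z=[Z;Z']$, we have $W^*(\widehat A-\lambda \widehat B)\widehat Z=W^*(A-\lambda B)Z$, so it suffices to analyze the padded square pencil.

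For fixed $W$ and $\widehat Z$ of full column rank, let $U\in\C^{n,k}$ and $V\in\C^{n,k}$ be orthonormal bases for the orthogonal complements of $W$ and $\widehat Z$, respectively. A direct block computation using $U^*W=0$ and $V^*\widehat Z=0$ yields
\[
[U \ \ W]^*\bigl(\widehat A-\lambda\widehat B+\tau\,U(D_A-\lambda D_B)V^*\bigr)[V \ \ \widehat Z]=\left[\begin{array}{cc} U^*(\widehat A-\lambda\widehat B)V+\tau(D_A-\lambda D_B) & U^*(\widehat A-\lambda\widehat B)\widehat Z \\ W^*(\widehat A-\lambda\widehat B)V & W^*(A-\lambda B)Z \end{array}\right],
\]
so the $(2,2)$-block is exactly the projected pencil and the rank-completing perturbation affects only the $(1,1)$-block.

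The main obstacle is transferring the genericity hypothesis of \cref{thm:nfp} (expressed in terms of $U$ and $V$) to genericity in $W$ and $\widehat Z$. The polynomial nonvanishing conditions underlying \cref{thm:nfp} are invariant under right multiplication of $U$ and $V$ by invertible matrices, so they depend only on the ranges of $U$ and $V$. The passage to the orthogonal complement is rational in suitable affine charts of the Grassmannians $\operatorname{Gr}(k,n)$ and $\operatorname{Gr}(n-k,n)$, so combined with \cref{lem:gen} the generic sets of \cref{thm:nfp} pull back to generic sets $\Omega\subseteq\C^{n,n-k}$ for $W$ and $\Omega'\subseteq\C^{n,n-k}$ for $\widehat Z$. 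The auxiliary matrices $D_A, D_B$ may be chosen freely (e.g., diagonal with eigenvalues disjoint from those of $A-\lambda B$), as they only influence prescribed eigenvalues, which drop out of the $(2,2)$-block.

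With the genericity translated, items~1 and~2 follow by lifting and projecting. \cref{thm:nfp} diagonalizes the perturbed padded pencil into $\operatorname{diag}(R(\lambda),R_{\rm pre}(\lambda),R_{\rm ran}(\lambda))$. True eigenvalues, and random eigenvalues of type $V^*x=0$, have right eigenvectors of the form $\widehat Z s$, and the bottom block row of the eigenvalue equation reduces to $W^*(A-\lambda_0 B)Zs=0$; the symmetric argument with left eigenvectors $Wt$ accounts for random eigenvalues of type $U^*y=0$. Counting $r+N+M=n-k$ (cf.~\cref{rem16.8.19}) exhausts the spectrum of the $(n-k)$-square projected pencil, proving item~1 together with the simplicity of random eigenvalues inherited from item~4 of \cref{thm:nfp}. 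For item~2, the condition $W_\perp^*(A-\lambda_0 B)Zx=0$ is precisely the top block-row equation needed to lift $\widehat Z x$ to a right eigenvector of the perturbed pencil, and symmetrically for the left condition; since item~4 of \cref{thm:nfp} guarantees that each random eigenvalue lifts on exactly one side, imposing both conditions isolates the true eigenvalues. The case $\lambda_0=\infty$ reduces to the finite case via the swap $A\leftrightarrow B$.
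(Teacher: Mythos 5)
Your overall strategy---pad the pencil with $n-m$ zero columns, realize $W^*(A-\lambda B)Z$ as the $(2,2)$-block of a transformed rank-completing perturbation (this is exactly the observation \eqref{coord}), and reduce to \cref{thm:nfp}---is the same as the paper's. However, two steps that you assert are precisely the places where the real work lies, and as stated they do not go through.

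First, the genericity transfer. You claim the generic sets of \cref{thm:nfp} ``depend only on the ranges of $U$ and $V$'' because the underlying polynomial conditions are ``invariant under right multiplication by invertible matrices.'' A generic set is the complement of the zero set of some polynomial $p$, and $\{U : p(U)\neq 0\}$ need not be a union of $\mathrm{GL}_k$-orbits, so this invariance does not hold in general. Moreover, the map $W\mapsto W_\perp$ involves complex conjugation---which is exactly why the paper redefines genericity in terms of real and imaginary parts---and ``rational in Grassmannian charts'' does not by itself produce the required polynomial certificate in those real coordinates, nor is $W_\perp$ even a well-defined single matrix. The paper resolves this with a dedicated result, \cref{prop:perpgen}: the \emph{pointwise} orthogonal complement $\Omega^\perp=\{W : \exists\, U\in\Omega,\ W^*U=0\}$ of a generic set is generic, proved via an unnormalized Gram--Schmidt construction whose entries are real polynomials (\cref{qlemma}). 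Note the existential quantifier: one only needs \emph{some} $U\in\Omega$ orthogonal to the given $W$. Your sketch needs this proposition (or an equivalent) to become a proof; the justification you give in its place is incorrect.

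Second, item 1 asserts that the Kronecker canonical form of the projected pencil is $\operatorname{diag}(R(\lambda),R_{\rm ran}(\lambda))$ with $R(\lambda)$ equal to the \emph{entire} regular part of $A-\lambda B$, Jordan structure included. Your argument lifts eigenvectors and counts $r+N+M=n-k$; this is essentially the proof of \cref{prop_proj}, which is valid only under the semisimplicity assumption that \cref{main} is specifically designed to remove. When $R(\lambda)$ contains nontrivial Jordan blocks, matching eigenvalues (or even geometric multiplicities) does not determine the KCF of the projected pencil. The paper instead exhibits an explicit strict equivalence $W^*(A-\lambda B)Z=\widetilde W\operatorname{diag}(R(\lambda),\widetilde R(\lambda))\widetilde Z$ with $\widetilde W,\widetilde Z$ invertible---which requires the additional generic condition that the leading $(n-k)\times(n-k)$ block of $\widehat Q^{-1}\widehat Z$ be nonsingular---so that the regular part survives with its full Jordan structure; only after that is the eigenvector-lifting and counting argument used, and only to identify $\widetilde R(\lambda)$ with $R_{\rm ran}(\lambda)$, whose eigenvalues are simple. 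Your treatment of item 2 (lifting on exactly one side for random eigenvalues, both sides for true ones) is essentially the paper's argument and is fine once item 1 is in place.
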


The proof of Theorem~\ref{main} is rather lengthy and involved and is deferred to Section~\ref{sec:proof}.

\section{Numerical methods for extracting the eigenvalues of singular pencils}

After having established the supporting theory we now present numerical methods that extract
the eigenvalues of a given singular matrix pencil.

\subsection{First extraction method: projection}\label{subsec:proj}
\Cref{main} justifies the following algorithm that returns the regular
eigenvalues $\lambda_i$ of a given singular pencil $A-\lambda B$ 
for $i=1,\ldots,r$, where $r$ is the size of the regular part of
$A-\lambda B$. To further extract finite eigenvalues from all eigenvalues of the
regular part, we will apply Algorithm~3 from \cref{sec:classif}.
The latter algorithm will need as an input the reciprocals of the condition numbers of the regular eigenvalues
of the corresponding projected regular pencil that we therefore
add as an output to the following algorithm. For the theoretical
background on the condition numbers we refer to \cref{sec:classif}.

\noindent\vrule height 0pt depth 0.5pt width \textwidth \\
{\bf Algorithm~1: Computing regular eigenvalues of a singular pencil $A-\lambda B$
by projection}. \\[-3mm]
\vrule height 0pt depth 0.3pt width \textwidth \\
{\bf Input:} $A,B\in \C^{n,m}$, $n \ge m$, $k = n-\text{nrank}(A,B)$,
threshold $\delta$ (default $\varepsilon^{1/2}$).\\
{\bf Output:} Eigenvalues and reciprocals of the condition numbers of the regular eigenvalues. \\
\begin{tabular}{ll}
{\footnotesize 1:} & Select random unitary $n\times n$ matrices 
 $[W \ \, W_\perp]$ and
 $[\widehat Z \ \, \widehat Z_\perp]$,
where $W$ and $\widehat Z$ have $n-k$\\ & columns.\\
{\footnotesize 2:} & Form $Z$ and $Z_\perp$ by removing the last $n-m$ rows of $\widehat Z$ and $\widehat Z_\perp$, respectively.\\
{\footnotesize 3:} & Compute the eigenvalues $\lambda_i$, $i=1,\dots,n-k$, and normalized right and left eigenvectors
$x_i$ and\\ & $y_i$ of $W^*(A-\lambda B)Z$.\\
{\footnotesize 4:} & Compute $\alpha_i=\|W_\perp^*(A-\lambda_iB)Zx_i\|$, $\beta_i=\|y_i^*W^*(A-\lambda_iB)Z_\perp\|$,
$i=1,\dots,n-k$. \\
{\footnotesize 5:} & Compute $\gamma_i=|y_i^*W^*BZx_i|\,(1+|\lambda_i|^2)^{-1/2}$ for $i=1,\ldots, n-k$.\\
{\footnotesize 6:} & Return $\lambda_i$ and $\gamma_i$ for those
$i=1,\ldots,n-k$ such that $\max(\alpha_i,\beta_i) < \delta \, (\|A\|+|\lambda_i|\,\|B\|)$.
\end{tabular} \\
\vrule height 0pt depth 0.5pt width \textwidth
\medskip

One clear advantage of Algorithm~1 over the perturbation method from
\cite{HMP19} is the smaller size of the regular pencil $W^*(A-\lambda B)Z$ which
is $(n-k)\times (n-k)$ instead of $n\times n$. We note that the computation of eigenvalues and left
and right eigenvectors of the $n\times n$ perturbed pencil \eqref{pert}, where
typically the QZ algorithm is applied,
is by far the most expensive step of the method from \cite{HMP19}. Compared to that,
the extra work needed to apply the projections
with matrices $W$ and $Z$ in Algorithm~1, although it can take
$\mathcal{O}(n^3)$ operations, is in practice negligible
and pays off with smaller matrices in Step~{\footnotesize 3} of Algorithm~1,
which is thus usually more efficient even
when $k$ is small compared to $n$.

A strategy that circumvents the need of computing projection matrices is the
following. As the set of matrices $W$, $\widehat Z$ for which the statements in
\cref{main} hold is a generic set, it is likely that it contains all matrices
whose columns are the first $n-k$ columns of a permutation matrix. (Indeed, this
is expected to be the case with probability one for random singular matrix
pencils, but if special pencils from applications are considered, one has
to be careful as the selected matrices may happen to be in the complements of
the generic sets from \cref{main}.)
In that case, the matrices $[W \ \, W_\perp]$
and $[\widehat Z \ \, \widehat Z_\perp]$ can be chosen to be permutation matrices and hence
the pencil $W^*(A-\lambda B)Z$ from \cref{main} can be obtained by extracting $n-k$ rows and columns
from $A-\lambda B$ without any computational cost. Clearly, the columns of $\widehat Z$ should consist of vectors from the
first $m$ standard basis vectors of $\C^n$, otherwise, the corresponding split off matrix $Z$ would contain a zero column.

In the special case $n-k=m<n$ (i.e., the pencil $A-\lambda B$ is of full rank, but nonsquare and thus still singular)
the approach above will render the matrix $Z_\perp=0$, because the matrix $\widehat Z_\perp$ necessarily has to
consist of the last $n-m$ standard basis vectors of $\C^n$. This fits with the observation in \cref{rem16.8.19},
because in this case, the extended pencil $\widetilde A-\lambda\widetilde B$ only has left minimal indices that are zero
and consequently, each random eigenvalue $\lambda_0$ of $W^*(A-\lambda B)Z$ will satisfy the condition
$W_\perp^*(A-\lambda_0 B)Zx=0$, where $x$ is a right eigenvector of $W^*(A-\lambda B)Z$ associated with $\lambda_0$.

\subsection{Second alternative extraction method: augmentation}
\label{sec:augm}
We now present a second new alternative approach to the method of \cite{HMP19} via the $(n+k) \times (n+k)$
augmented (or bordered) matrix pencil
\begin{equation} \label{augm1x}
A_a-\lambda B_a :=
\left[ \begin{array}{cc} A & UT_A \\ S_AV^* & 0 \end{array} \right]
-
\lambda \, \left[ \begin{array}{cc} B & UT_B \\ S_BV^* & 0 \end{array} \right],
\end{equation}
where $S_A,S_B,T_A$, and $T_B$ are $k\times k$ diagonal matrices.
Observe that this approach can be interpreted as applying a rank-completing
perturbation of rank $2k$ to the $(n+k)\times(n+k)$ matrix pencil $\operatorname{diag}(A,0)-\lambda\operatorname{diag}(B,0)$ of normal rank $n-k$
and hence it is expected
that generically the augmented pencil will be regular. However, this does not
immediately follow from the results in \cite{HMP19}, because of the special block
structure of the perturbation pencil.
To simplify the presentation, we will not give a rigorous proof analogue to \cref{main} from the previous section. Instead, we present
the following result, similar to \cref{prop_proj}, that motivates Algorithm~2.

\begin{proposition}\label{prop_augm}
Let $A-\lambda B$ be an $n\times n$ singular pencil of normal
rank $n-k$ such that all its eigenvalues are semisimple, i.e., $d=1$ for all
blocks $J_d$ and $N_d$ in the KCF of $A-\lambda B$. Assume that the regular diagonal
$k\times k$ pencils $S_A-\lambda S_B$ and $T_A-\lambda T_B$ are chosen in such
a way that their $2k$ eigenvalues are pairwise distinct. Furthermore, let
$U,V\in \C^{n,k}$ have orthonormal columns such that
the augmented pencil \eqref{augm1x} is regular. Then the pencil \eqref{augm1x}
has the following eigenvalues:
\begin{enumerate}
\item[a)] $2k$ prescribed eigenvalues, which are precisely the
eigenvalues of $S_A-\lambda S_B$ and $T_A-\lambda T_B$;
\item[b)] the random eigenvalues of \eqref{pert} with the same $U$ and $V$ and with $D_A=T_AS_A$ and $D_B=T_BS_B$;
\item[c)] the true eigenvalues of $A-\lambda B$.
\end{enumerate}
\end{proposition}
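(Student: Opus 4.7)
The plan is to exhibit, for each eigenvalue in (a), (b), (c), an explicit right or left eigenvector of $A_a - \lambda B_a$, and then close the argument with a dimension count. Since the augmented pencil is assumed regular of size $n+k$, it has exactly $n+k$ eigenvalues counted with algebraic multiplicity. Using \cref{summry} and \cref{rem16.8.19}, items (a), (b), (c) contribute respectively $2k$, $N+M$, and $r$ eigenvalues, summing to $2k + (n-k-r) + r = n+k$. Hence, once each listed eigenvalue is shown to be an eigenvalue of $A_a - \lambda B_a$ with at least its listed multiplicity and the three classes are shown to be pairwise disjoint, equality in the count forces the list to be exhaustive.

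For class (a), the structure is immediate. If $(T_A - \lambda_0 T_B)\, e_j = 0$, then $(0,\, e_j^\top)^\top$ solves the right-eigenvalue equation, since the first block-row collapses to $U(T_A - \lambda_0 T_B)\, e_j = 0$ and the second block-row to $0 = 0$. Dually, if $e_i^*(S_A - \lambda_0 S_B) = 0$, then $(0,\, e_i^*)$ is a left eigenvector of $A_a - \lambda_0 B_a$.

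For class (c), \cref{thm:nfp}(2) yields, for each true eigenvalue $\lambda_0$ of multiplicity $m$ in $R(\lambda)$, $m$ linearly independent right eigenvectors $x$ of the perturbed pencil \eqref{pert} satisfying $V^* x = 0$; the perturbation term then vanishes, giving $(A - \lambda_0 B)\, x = 0$, so $(x^\top,\, 0)^\top$ is a right eigenvector of $A_a - \lambda_0 B_a$. This produces $m$ linearly independent augmented eigenvectors at $\lambda_0$, hence geometric---and so algebraic---multiplicity at least $m$. The random eigenvalues of \eqref{pert} of type $V^* x = 0$ (there are $M$ of them by \cref{rem16.8.19}) are handled identically, using \cref{thm:nfp}(4); \cref{lemUV} justifies that we may invoke \cref{thm:nfp} with the specific choice $D_A = T_A S_A$, $D_B = T_B S_B$ without changing the random spectrum. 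The remaining $N$ random eigenvalues, of type $U^* y = 0$, are handled dually: the corresponding left eigenvector $y$ of \eqref{pert} satisfies $y^* U = 0$, whence $y^*(A - \mu B) = 0$, and $(y^*,\, 0)$ is a left eigenvector of $A_a - \mu B_a$.

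The main obstacle is the distinctness and multiplicity bookkeeping. Distinctness of the random eigenvalues from the true and from the prescribed follows from \cref{thm:nfp}(4); distinctness among the $2k$ prescribed eigenvalues is by hypothesis; distinctness of prescribed from true must be added as a mild generic condition on the diagonal entries of $S_A, S_B, T_A, T_B$, which can be arranged given the freedom in those matrices. With distinctness in place, the $r + (N + M) + 2k = n+k$ eigenvectors we have produced are attached to $n+k$ distinct eigenvalues of $A_a - \lambda B_a$, and by regularity this is the full spectrum.
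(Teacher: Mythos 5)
Your proposal is correct and follows essentially the same route as the paper's proof: explicit eigenvectors $(0,e_j)$ and $(0^*,e_i^*)$ for the prescribed eigenvalues, vectors of the form $(x,0)$ with $V^*x=0$ (resp.\ $(y^*,0)$ with $U^*y=0$) for the true and random eigenvalues via \cref{summry}/\cref{thm:nfp} and \cref{lemUV}, closed by the same counting argument $2k+(N+M)+r=n+k$. Your treatment is in fact slightly more careful on the multiplicity bookkeeping and on the need to assume the prescribed eigenvalues avoid the true ones (which the paper only states after its proof); the one small imprecision is that \cref{thm:nfp}(4) separates the random eigenvalues from those of $T_AS_A-\lambda T_BS_B$, not from those of $T_A-\lambda T_B$ and $S_A-\lambda S_B$ individually, but the paper's own counting step is no more rigorous on this point.
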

\begin{proof}
For part a), clearly, if $\mu$ is an eigenvalue of $T_A-\lambda T_B$ with
an eigenvector $e_i\in \C^k$, then
\[ (A_a-\mu B_a)\left[ \begin{array}{cc} 0 \\ e_i \end{array} \right]=0 \]
and $\mu$ is an eigenvalue of \eqref{augm1x}. In a similar way,
if $\mu$ is an eigenvalue of $S_A-\lambda S_B$ with
an eigenvector $e_i\in \C^k$, then
\[
[\, 0^* \ \ e_i^*\,] \, (A_a-\mu B_a)=0
\]
and $\mu$ is an eigenvalue of \eqref{augm1x}. As we assumed that all eigenvalues of $T_A-\lambda T_B$ and
$S_A-\lambda S_B$ are pairwise distinct, this gives $2k$ eigenvalues of
$A_a-\lambda B_a$.

For the cases b) and c) we consider the perturbed pencil \eqref{pert} with the same $U$ and $V$ and with $D_A=T_AS_A$ and $D_B=T_BS_B$. Then we make use of the fact that by \cref{lemUV} the random eigenvalues of \eqref{pert} are independent of $\tau, D_A$, and $D_B$.
Thus, if $\mu$ is a random eigenvalue of \eqref{pert} then it follows from \cref{summry} that either
the right eigenvector is $V_{\perp} s$ for
a nonzero $s\in \C^{n-k}$ or the left eigenvector
is $U_{\perp} t$ for a nonzero $t\in \C^{n-k}$. Then either
\[
(A_a-\mu B_a)\left[ \begin{array}{cc} V_{\perp} s \\ 0 \end{array} \right]=0\quad{\rm or}\quad
[\, t^*U_{\perp}^* \ \ 0^* \,] \, (A_a-\mu B_a)=0
\]
and consequently $\mu$ is an eigenvalue of $A_a-\lambda B_a$.

For c), similarly as in b), we know from \cref{summry} that $\mu$ is a true eigenvalue of
$A-\lambda B$ when $\mu$ is an eigenvalue
of \eqref{pert} with
the right and left eigenvector
of the form $V_{\perp} s$ and $U_{\perp} t$,
where $s,t\in \C^{n-k}$.
It follows that
\[
(A_a-\mu B_a)\left[ \begin{array}{cc} V_{\perp} s \\ 0 \end{array} \right]=0\quad{\rm and}\quad
 [\, t^*U_{\perp}^* \ \ 0^* \,] \, (A_a-\mu B_a)=0,
\]
therefore $\mu$ is an eigenvalue of
$A_a-\lambda B_a$.

As all eigenvalues in b) are semisimple and all eigenvalues in a) and c) are simple, it follows by a simple counting argument that this gives all eigenvalues of $A_a-\lambda B_{a}$.
\end{proof}

From the above proof we can learn how to extract the
true eigenvalues of $A-\lambda B$ from the eigenvalues
of $A_{a}-\lambda B_{a}$.
Suppose that $(\theta, x, y)$ is an eigentriplet of
$A_{a}-\lambda B_{a}$, where
\[
x = \left[ \begin{array}{cc} x_1 \\ x_2 \end{array} \right],\quad
y = \left[ \begin{array}{cc} y_1 \\ y_2 \end{array} \right] \]
are in block form in accordance with \eqref{augm1x}. It is
easy to see that $\theta$ is a true eigenvalue if and only if
$x_2=y_2=0$. Here we assume that the diagonal matrices
$T_A,T_B,S_A,S_B$ are chosen in such a way that all prescribed eigenvalues differ from the true eigenvalues of $A-\lambda B$.
We give an overview of the method, based
on the augmented pencil \eqref{augm1x}, in Algorithm~2. In practice, this algorithm can be applied to an arbitrary singular pencil, not just to those covered
by \cref{prop_augm}. As for Algorithm~1, to further extract finite eigenvalues from all eigenvalues of the
regular part, we apply Algorithm~3 from \cref{sec:classif}.

\noindent\vrule height 0pt depth 0.5pt width \textwidth \\
{\bf Algorithm~2: Computing regular eigenvalues of a singular pencil $A-\lambda B$
by an augmentation}. \\[-3mm]
\vrule height 0pt depth 0.3pt width \textwidth \\
{\bf Input:} $A,B\in \C^{n,n}$, $k = n-\text{nrank}(A,B)$,
threshold $\delta$ (default $\varepsilon^{1/2}$).\\
{\bf Output:} Eigenvalues and reciprocals of the condition numbers of the regular eigenvalues. \\
\begin{tabular}{ll}
{\footnotesize 1:} & Select random $n\times k$ matrices $U$ and $V$ with orthonormal columns.\\
{\footnotesize 2:} & Select diagonal $k \times k$ matrices $T_A$, $T_B$, $S_A$, and $S_B$ such that the
eigenvalues of $T_A-\lambda T_B$ and\\
& $S_A-\lambda S_B$ are (likely) different from those of $A-\lambda B$ (default: choose diagonal elements\\
& uniformly random from the interval $[1, 2]$).\\
{\footnotesize 3:} & Compute the eigenvalues $\lambda_i$, $i=1,\dots,n+k$, and normalized right and left eigenvectors
$\smtxa{c}{x_{i1} \\ x_{i2}}$ \\
& and $\smtxa{c}{y_{i1} \\ y_{i2}}$ of the augmented pencil
\eqref{augm1x}.
\\
{\footnotesize 4:} & Compute $\alpha_i=\|x_{i2}\|$, \ $\beta_i=\|y_{i2}\|$,
\ $i=1,\dots,n+k$. \\
{\footnotesize 5:} & Compute $\gamma_i=|y_{i1}^*Bx_{i1}| \, (1+|\lambda_i|^2)^{-1/2}$, \ $i=1,\ldots, n+k$.\\
{\footnotesize 6:} & Return $\lambda_i$ and $\gamma_i$ for those
$i=1,\ldots,n+k$ such that $\max(\alpha_i,\beta_i)<\delta$.
\end{tabular} \\
\vrule height 0pt depth 0.5pt width \textwidth
\medskip

Compared to the approach of \eqref{pert} and Algorithm~1,
in Algorithm~2 we have to compute eigenvalues and eigenvectors of a regular pencil of the larger size $(n+k)\times (n+k)$.
Because of that the other two methods, in particular the rank projection method, are more suitable for dense singular pencils.
An advantage of the augmented pencil approach is that it does not change the original matrices $A$ and $B$. If
the matrices are sparse and such that a linear system with
the augmented matrix $A_a-\sigma B_a$ can be solved efficiently, then
we can apply a shift-and-invert subspace method, for instance {\tt eigs} in
Matlab, to compute a subset of true eigenvalues close to a target $\sigma$.

\begin{remark}\rm\label{rem:augmx}
If one is interested only in the finite eigenvalues of the given singular
pencil, then instead of \eqref{augm1x} we can consider a simpler augmented pencil
\begin{equation}
\label{augm1}
\breve{A}-\lambda \breve{B} :=
\left[ \begin{array}{cc} A & U \\ V^* & 0 \end{array} \right]
\left[ \begin{array}{cc} x \\ z \end{array} \right]
=
\lambda \, \left[ \begin{array}{cc} B & 0 \\ 0 & 0 \end{array} \right]
\left[ \begin{array}{cc} x \\ z \end{array} \right].
\end{equation}
This is equivalent to selecting $T_A=S_A=I_k$
and $T_B=S_B=0$ in line 2 of Algorithm~2. Although this case is not
covered by \cref{prop_augm}, numerical tests suggest that the method
still works well.
In this case all $2k$ prescribed eigenvalues are $\infty$, which makes it harder to separate infinite prescribed eigenvalues from
the infinite true eigenvalues, but since
we are only interested in finite true eigenvalues, this is not important. On the other hand, it is guaranteed that the prescribed
eigenvalues are different from the finite eigenvalues of the
given singular pencil.
\end{remark}

\section{Classification of regular eigenvalues into finite and infinite}
\label{sec:classif} Suppose that
using Algorithm~1, Algorithm~2, or the algorithm from \cite{HMP19} we have extracted
the true eigenvalues of the singular pencil.
In the second phase, which is common to all three methods, we extract the
finite eigenvalues from the set of identified true eigenvalues.
In \cite{HMP19}, we have used the values $s_i=y_{i}^*Bx_i$ as a criterion,
where $y_i$ and $x_i$ are normalized left and
right eigenvectors, respectively, corresponding to an eigenvalue $\lambda_i$ of
the perturbed pencil \eqref{pert}. If $\lambda_i$ is a simple
eigenvalue, then $1/|s_i|$ appears in
the expression for a standard condition number. However, this is a condition
number of $\lambda_i$ as an eigenvalue of the regular pencil \eqref{pert},
and for different matrices $U$, $V$ we get different eigenvectors and thus
different values of $s_i$ for the same eigenvalue $\lambda_i$ of the
original singular pencil.

Since an eigenvalue of a singular pencil can always
be perturbed into an arbitrary value using arbitrarily small perturbations it follows that
the condition number of that eigenvalue for the original singular matrix
pencil is infinite. Therefore, Lotz and Noferini suggest in \cite{LotzNoferini} to consider the
so-called $\delta$-weak condition number of eigenvalues for singular pencils
and show that it can be approximated by an expression that is given in the following definition.

\begin{definition}\label{df:kappa}
Let $\lambda_0$ be an algebraically simple eigenvalue of an $n\times n$ singular pencil $A-\lambda B$ that has normal rank $n-k$. Let
$X=[X_1\ \, x]$ be an $n\times (k+1)$ matrix with orthonormal columns such that the columns of $X_1$ form a basis for
${\rm ker}(A-\lambda_0 B)\cap {\cal M}_{\rm RS}(A,B)$ and the columns of $X$ form a basis for
${\rm ker}(A-\lambda_0 B)$, and let
$Y=[Y_1\ y]$ be an $n\times (k+1)$ matrix with orthonormal columns such that the columns of $Y_1$ form a basis for
${\rm ker}((A-\lambda_0 B)^*)\cap {\cal L}_{\rm RS}(A,B)$
and the columns of $Y$ form a basis for
${\rm ker}((A-\lambda_0 B)^*)$. Then we define
\begin{equation}\label{def:kp}
\gamma(\lambda_0)=|y^*Bx| \, (1+|\lambda_0|^2)^{-1/2}
\end{equation}
and use $\kappa(\lambda_0)=\gamma(\lambda_0)^{-1}$ as the \emph{condition number} of
$\lambda_0$.
\end{definition}

Based on the above definition and also on the fact that a similar approach,
supported by strong theoretical results, has recently been used in an algorithm for computing finite eigenvalues of a singular pencil in \cite{KreGlib22}, where full-rank random perturbations are exploited, we opt to use
\begin{equation}\label{def:kpi}
\gamma_i=|y_i^*Bx_i| \, (1+|\lambda_i|^2)^{-1/2}
\end{equation}
in criteria to extract finite eigenvalues.
Here $\lambda_i$ is a computed eigenvalue and $x_i$ and $y_i$ are the corresponding computed normalized right and left eigenvectors, respectively.

Let $X=[X_1\ \, x]$ and $Y=[Y_1\ \, y]$ be bases for subspaces associated with
an eigenvalue $\lambda_i$ as in \cref{df:kappa}. If $U$ and $V$ are the
$n\times k$ matrices used in \eqref{pert}, then the right and left
eigenvectors $x_i$ and $y_i$ have the forms
$x_i=[X_1\ \, x] \smtxa{c}{a \\ \alpha}$ and
$y_i=[Y_1\ \, y] \smtxa{c}{b \\ \beta}$,
and satisfy $\|x_i\|=\|y_i\|=1$, $V^*x_i=0$ and $U^*y_i=0$. Since
$Y^*BX_1=0$ and $Y_1^*BX=0$, we get
$y_i^*Bx_i = \alpha \beta^* y^*Bx$ and thus $\gamma_i=|\alpha| \, |\beta| \, \gamma(\lambda_i)$. Since
$0\le |\alpha|,|\beta|\le 1$, we always get $\gamma_i\le \gamma(\lambda_i)$.
Numerical results suggest that if the elements of $U$ and $V$ are
independent and identically distributed standard normal random variables, then
the expected value of $|\alpha|\,|\beta|$ behaves as
${\cal O}(1/k)$ and the computed condition numbers can thus be used as reliable
criteria to extract simple finite eigenvalues satisfying $\gamma_i\ne 0$. A
detailed stochastic analysis is outside the scope of this paper.

In \cite{HMP19}, we have used the criterion $|y_i^*Bx_i|\ge \delta_2$ with a default
value of $\delta_2=10^2\,\varepsilon$, where $\varepsilon$ is the machine
precision ($\varepsilon = 2.2\cdot 10^{-16}$ in double precision), to detect
finite eigenvalues. The criterion is based on the fact that $y_i^*Bx_i\ne 0$ if
$\lambda_i$ is a simple eigenvalue. However, singular pencils can also have multiple
eigenvalues and for those values
$y_i^*Bx_i$ might be $0$. In practice, an eigenvalue of algebraic multiplicity
$m$ is numerically evaluated
as $m$ simple eigenvalues with nonzero values $y_i^*Bx_i$ and because of that
\cite[Algorithm~1]{HMP19} properly extracts
multiple finite eigenvalues in most cases.

To further improve the detection of finite eigenvalues, we propose the
heuristic criteria presented in Algorithm~3. As another distinctive value for
the recognition we use the relative gap of an eigenvalue, which we define as
\[
{\rm gap}_i=\min_{j\ne i}\frac{|\lambda_j-\lambda_i|}{(1+|\lambda_i|^2)^{1/2}}.
\]
The main idea is that a computed representative $\lambda_i$ of a multiple finite
eigenvalue will have
a small $\gamma_i$ but also a small
${\rm gap}_i$. On the other hand, a (multiple) infinite eigenvalue will usually
appear as a finite eigenvalue with
$\gamma_i$ very close to zero and a large ${\rm gap}_i$ (close to $1$).

\noindent\vrule height 0pt depth 0.5pt width \textwidth \\
{\bf Algorithm~3: Extraction of finite eigenvalues from the regular eigenvalues
of a singular pencil $A-\lambda B$}. \\[-3mm]
\vrule height 0pt depth 0.3pt width \textwidth \\
{\bf Input:} eigenvalues $\lambda_i$
 and reciprocals $\gamma_i$ of condition numbers, $i=1,\ldots,r$, for regular eigenvalues of $A-\lambda B$,
thresholds $\delta_1$ (default $\varepsilon^{1/2}$), $\delta_2$ (default $10^2\,\varepsilon$),
$\xi_1$ (default $0.95$) and $\xi_2$ (default 0.01).\\
{\bf Output:} Finite eigenvalues of the regular part. \\
\begin{tabular}{ll}
{\footnotesize 1:} & Compute ${\rm gap}_{i}=\min_{j\ne i} |\lambda_j-\lambda_i| \, (1+|\lambda_i|^2)^{-1/2}$, \ $i=1,\ldots,r$.\\
{\footnotesize 2:} & If $\gamma_i<\delta_1$ and ${\rm gap}_i>\xi_1$, \ $i=1,\ldots,r$, flag $\lambda_i$ as an infinite eigenvalue.\\
{\footnotesize 3:} & If $\gamma_i<\delta_2$ and ${\rm gap}_i>\xi_2$, \ $i=1,\ldots,r$, flag $\lambda_i$ as an infinite eigenvalue.\\
{\footnotesize 4:} & Return $\lambda_i$ for those $i=1,\ldots,r$ such that $\lambda_i$ is not flagged as an infinite eigenvalue.\\
\end{tabular} \\
\vrule height 0pt depth 0.5pt width \textwidth
\medskip

A brief explanation of the criteria in Algorithm~3 is as follows. If the pencil
$A-\lambda B$ has an eigenvalue $\lambda_0$ with
a Jordan block $J_d(\lambda_0)$, where $d\ge 2$, then a backward stable
numerical algorithm that we apply to the pencil $\widetilde A-\lambda \widetilde B$ from \eqref{pert} will typically compute
$d$ simple eigenvalues $\lambda_0^{(1)},\ldots,\lambda_0^{(d)}$ such that
$|\lambda_0^{(i)}-\lambda_0|={\cal O}(\varepsilon^{1/d})$
for $i=1,\ldots,d$. Therefore, we have ${\rm gap}_i={\cal O}(\varepsilon^{1/d})$.
(The same analysis applies to the pencil $W^*(A-\lambda B)Z$ from Algorithm~1 or
to the augmented pencil from Algorithm~2). As the algorithm is backward stable,
these are exact eigenvalues of a perturbed pencil
$\dot A-\lambda \dot B$, such that $\|\dot A -\widetilde A\|={\cal O}(\varepsilon)\,\|\widetilde A\|$ and
$\|\dot B -\widetilde B\|={\cal O}(\varepsilon)\,\|\widetilde B\|$. By reversing roles of the pencils,
we can consider that a simple eigenvalue $\lambda_0^{(i)}$ of $\dot A-\lambda \dot B$ perturbs into an
eigenvalue $\lambda_0$ of $\widetilde A -\lambda \widetilde B$.
As we know that a change of a simple eigenvalue, which is ${\cal O}(\varepsilon^{1/d})$, is bounded by the product of the size of the perturbation
of the matrices, which is ${\cal O}(\varepsilon)$, and the condition number $\kappa(\lambda_0^{(i)})$ of the eigenvalue, this gives us a lower bound
on the condition number.
It follows that we can expect $\kappa(\lambda_0^{(i)})={\cal O}(\varepsilon^{(1-d)/d})$ and thus $\gamma(\lambda_0^{(i)})={\cal O}(\varepsilon^{(d-1)/d})$. This explains the observation that
in practice we get a small nonzero value $\gamma_i$ in case of a multiple
eigenvalue that is usually large enough so that we recognize the eigenvalue as a
finite one. If however, we find an eigenvalue with a value $\gamma_i$ very close
to zero and a small value of ${\rm gap}_i$, then we can deduce that this also
represents a multiple eigenvalue.

Rather than setting inclusive criteria for finite eigenvalues, we set two
sufficient criteria for infinite eigenvalues and treat the remaining eigenvalues
as finite. In line 2, if we have an eigenvalue with a very large ${\rm gap}_i$
and sufficiently small $\gamma_i$ (we use $\delta_1=\varepsilon^{1/2}$ since we
expect that a simple finite eigenvalue will have $\gamma_i>\delta_1$), then this
indicates an infinite eigenvalue. In line 3, if $\gamma_i$ is very small, this
indicates either a finite eigenvalue of large multiplicity or an infinite
eigenvalue. A finite eigenvalue will also have small ${\rm gap}_i$, on the
contrary to an infinite eigenvalue.
Algorithm~3 usually works well, unless the pencil $A-\lambda B$ has blocks
$J_d$ and $N_d$ in the regular part with a very large size $d$.

\section{Numerical examples}\label{sec:num}
In this section we demonstrate the methods with several numerical examples computed in Matlab 2021b.
All numerical examples and implementations of the algorithms are available in \cite{MultiParEig}.

\begin{example}\rm For the first example we revisit \cite[Ex.~6.1]{HMP19}, where
\begin{equation}\label{eq:AB7x7}
A=\smath{\left[\begin{array}{rrrrrrr}
 -1 & -1 & -1 & -1 & -1 & -1 & -1 \\
 1 & 0 & 0 & 0 & 0 & 0 & 0 \\
 1 & 2 & 1 & 1 & 1 & 1 & 1 \\
 1 & 2 & 3 & 3 & 3 & 3 & 3 \\
 1 & 2 & 3 & 2 & 2 & 2 & 2 \\
 1 & 2 & 3 & 4 & 3 & 3 & 3 \\
 1 & 2 & 3 & 4 & 5 & 5 & 4\end{array}\right]},\quad
B=\smath{\left[\begin{array}{rrrrrrr}
 -2 & -2 & -2 & -2 & -2 & -2 & -2 \\
 2 & -1 & -1 & -1 & -1 & -1 & -1 \\
 2 & 5 & 5 & 5 & 5 & 5 & 5 \\
 2 & 5 & 5 & 4 & 4 & 4 & 4 \\
 2 & 5 & 5 & 6 & 5 & 5 & 5 \\
 2 & 5 & 5 & 6 & 7 & 7 & 7 \\
 2 & 5 & 5 & 6 & 7 & 6 & 6\end{array}\right]},
\end{equation}
${\rm nrank}(A,B)=6$, and the KCF has blocks $J_1(1/2)$, $J_1(1/3)$, $N_1$, $L_1$, and $L_2^\top$.
Algorithm~1 projects $A-\lambda B$ to a $6\times 6$ pencil and returns
the values in \cref{tab:ex71proj}.

\begin{table}[htb!] \label{tab:ex71proj}
\centering
\caption{Results of Algorithm~1 (projection to normal rank) followed by Algorithm~3 applied to the singular pencil \eqref{eq:AB7x7}.}
{\footnotesize \begin{tabular}{c|clllll} \hline \rule{0pt}{2.3ex}%
$j$ & $\lambda_j$ & $\quad \ \gamma_j$ & $\quad \ \alpha_i$ & $\quad \ \beta_j$ & ${\rm gap}_j$ & Type \\[0.5mm]
\hline \rule{0pt}{2.5ex}%
1 & $0.5000000$ & $3.0\cdot 10^{-2}$ & $5.5\cdot 10^{-17}$ & $1.7\cdot 10^{-17}$ & $0.15$ & Finite true \\
2 & $0.3333333$ & $6.1\cdot 10^{-2}$ & $1.6\cdot 10^{-17}$ & $2.9\cdot 10^{-17}$ & $0.16$ & Finite true \\
3 & $-\infty$ & $0.0$ & $8.3\cdot 10^{-17}$ & $1.2\cdot 10^{-17}$ & $1.00$ & Infinite true \\
4 & $1.911415$ & $5.0\cdot 10^{-4}$ & $8.4\cdot 10^{-18}$ & $8.0\cdot 10^{-3}$ & $0.65$ & Random right \\
5--6 & $-0.1929034 \pm 0.3581876i$ & $4.0\cdot 10^{-3}$ & $4.4\cdot 10^{-3}$ & $2.5\cdot 10^{-17}$ & $0.59$ & Random left \\
\hline
\end{tabular}}
\end{table}

\noindent Algorithm~2 uses augmentation, therefore it works with matrices
$8\times 8$ and returns the values in \cref{tab:ex71augmx}.
Notice that the prescribed eigenvalues from pencils $T_A-\lambda T_B$ and $S_A-\lambda S_B$ can be identified from $\alpha_j=1$ and
$\beta_j=1$, respectively. If we use Algorithm~2 with a simpler augmented pencil \eqref{augm1} from \cref{rem:augmx}, we
obtain the values in \cref{tab:ex71augm}. In this case both prescribed eigenvalues are infinite.
There is no noticeable difference in the accuracy
of the computed eigenvalues, Algorithm~1 and Algorithm~2 (using \eqref{augm1x} or \eqref{augm1}) both return eigenvalues such that the
distance to the exact ones is ${\cal O}(10^{-16})$.

\begin{table}[htb!] \label{tab:ex71augmx}
\centering
\caption{Results of Algorithm~2 (augmented pencil) followed by Algorithm~3 applied to the singular pencil \eqref{eq:AB7x7}.}
{\footnotesize \begin{tabular}{c|clllll} \hline \rule{0pt}{2.3ex}%
$j$ & $\lambda_j$ & $\quad \ \gamma_j$ & $\quad \ \alpha_i$ & $\quad \ \beta_j$ & ${\rm gap}_j$ & Type \\[0.5mm]
\hline \rule{0pt}{2.3ex}%
1 & $0.5000000$ & $3.0\cdot 10^{-2}$ & $2.4\cdot 10^{-16}$ & $6.1\cdot 10^{-16}$ & $0.10$ & Finite true \\
2 & $0.3333333$ & $2.2\cdot 10^{-3}$ & $1.8\cdot 10^{-16}$ & $2.4\cdot 10^{-16}$ & $0.05$ & Finite true \\
3 & $\infty$ & $0.0$ & $9.5\cdot 10^{-16}$ & $1.5\cdot 10^{-16}$ & $1.00$ & Infinite true \\
4 & $0.9449265$ & $1.9\cdot 10^{-1}$ & $1.5\cdot 10^{-15}$ & $1.0$ & $0.23$ & Prescribed \\
5 & $0.3863457$ & $9.5\cdot 10^{-4}$ & $4.3\cdot 10^{-16}$ & $2.6\cdot 10^{-3}$ & $0.50$ & Random right \\
6 & $0.6307942$ & $6.9\cdot 10^{-2}$ & $1.0$ & $3.8\cdot 10^{-16}$ & $0.11$ & Prescribed \\
7--8 & $-0.3451090 \pm 0.3590986i$ & $3.8\cdot 10^{-3}$ & $7.4\cdot 10^{-3}$ & $4.6\cdot 10^{-17}$ & $0.64$ & Random left \\
\hline
\end{tabular}}
\end{table}

\begin{table}[htb!] \label{tab:ex71augm}
\centering
\caption{Results of Algorithm~2 (augmented pencil) using pencil \eqref{augm1} followed by Algorithm~3 applied to the singular pencil \eqref{eq:AB7x7}.}
{\footnotesize \begin{tabular}{c|clllll} \hline \rule{0pt}{2.3ex}%
$j$ & $\lambda_j$ & $\quad \ \gamma_j$ & $\quad \ \alpha_i$ & $\quad \ \beta_j$ & ${\rm gap}_j$ & Type \\[0.5mm]
\hline \rule{0pt}{2.5ex}%
1 & $0.5000000$ & $7.6\cdot 10^{-2}$ & $6.3\cdot 10^{-18}$ & $6.5\cdot 10^{-17}$ & $0.15$ & Finite true \\
2 & $0.3333333$ & $7.8\cdot 10^{-2}$ & $9.0\cdot 10^{-17}$ & $4.6\cdot 10^{-17}$ & $0.16$ & Finite true \\
3 & $-0.1817670$ & $1.8\cdot 10^{-3}$ & $3.1\cdot 10^{-17}$ & $1.0\cdot 10^{-2}$ & $0.40$ & Random right \\
4 & $\infty$ & $0.0$ & $6.9\cdot 10^{-17}$ & $1.0$ & $1.00$ & Prescribed \\
5--6 & $-0.2627866 \pm 0.3972325i$ & $2.7\cdot 10^{-3}$ & $1.3\cdot 10^{-3}$ & $5.0\cdot 10^{-17}$ & $0.37$ & Random left \\
7 & $\infty$ & $0.0$ & $1.0$ & $7.2\cdot 10^{-17}$ & $1.00$ & Prescribed \\
8 & $\infty$ & $0.0$ & $1.5\cdot 10^{-2}$ & $5.1\cdot 10^{-17}$ & $1.00$ & Random left \\
\hline
\end{tabular}}
\end{table}

Since in \cref{tab:ex71augm} we use the pencil \eqref{augm1} in Algorithm~2, all prescribed eigenvalues
are infinite. A side effect is that the algorithm fails to identify the true infinite
eigenvalue from block $N_1$. Since we are interested in finite eigenvalues only, this is not very relevant.
\end{example}

\begin{example}\label{ex:gaps}\rm To illustrate the behavior of values
$\gamma_i$ and ${\rm gap}_i$ in Algorithm~3, we give a numerical example with a
singular pencil that has multiple finite eigenvalue with blocks $J_d$ of
different sizes. We construct the pencil in Matlab using the MCS Toolbox \cite{MCS}
as
\begin{verbatim}
    rng('default')
    [A0,B0] = kcf(pstruct([2 1], [2 1], {[4,2,1]}, [1], [2,1]));
    Q = randn(18); Z = randn(18); A = Q*A0*Z; B = Q*B0*Z;
\end{verbatim}
This constructs a pencil $A-\lambda B$ of size $18\times 18$ with the following
blocks in the KCF: $L_1$, $L_2$, $L_1^\top$, $L_2^\top$, $J_4(1)$, $J_2(1)$, $J_1(1)$,
$N_2$, $N_1$. The normal rank is 16 and $\lambda=1$ is an eigenvalue of multiplicity
$7$. If we apply Algorithm~1 followed by Algorithm~3, we get the results
in \cref{tab:ex62}.

\begin{table}[htb!] \label{tab:ex62}
\centering
\caption{Results of Algorithm~1 followed by Algorithm~3 applied to the singular
pencil from \cref{ex:gaps}.
}
{\footnotesize \begin{tabular}{c|clllll} \hline \rule{0pt}{2.3ex}%
$j$ & $\lambda_j$ & $\quad \ \gamma_j$ & $\quad \ \alpha_j$ & $\quad \ \beta_j$ & ${\rm gap}_j$ & Type \\[0.5mm]
\hline \rule{0pt}{2.5ex}%
1 & $1.000000$ & $1.5\cdot 10^{-3}$ & $4.2\cdot 10^{-17}$ & $5.4\cdot 10^{-17}$ & $5.4\cdot 10^{-8}$ & Finite true \\
2--3 & $1.000000 \pm 7.563780\cdot 10^{-8}i$ & $1.5\cdot 10^{-10}$ & $5.4\cdot 10^{-17}$ & $9.1\cdot 10^{-17}$ & $5.4\cdot 10^{-8}$ & Finite true \\
4 & $0.9998633$ & $1.8\cdot 10^{-13}$ & $4.9\cdot 10^{-17}$ & $9.3\cdot 10^{-17}$ & $9.7\cdot 10^{-5}$ & Finite true \\
5--6 & $1.000000 \pm 1.366338\cdot 10^{-4}i$ & $1.8\cdot 10^{-13}$ & $4.2\cdot 10^{-17}$ & $1.0\cdot 10^{-16}$ & $9.7\cdot 10^{-5}$ & Finite true \\
7 & $1.000137$ & $1.7\cdot 10^{-13}$ & $4.4\cdot 10^{-17}$ & $9.5\cdot 10^{-17}$ & $9.7\cdot 10^{-5}$ & Finite true \\
8--9 & $-26.12267 \pm 2.530975\cdot 10^7i$ & $1.7\cdot 10^{-24}$ & $8.2\cdot 10^{-17}$ & $1.4\cdot 10^{-16}$ & $1.00$ & Infinite true \\
10 & $\infty$ & $0.0$ & $6.5\cdot 10^{-17}$ & $1.3\cdot 10^{-16}$ & $1.00$ & Infinite true \\
11 & $-0.3215088$ & $1.8\cdot 10^{-3}$ & $6.1\cdot 10^{-17}$ & $5.3\cdot 10^{-3}$ & $0.45$ & Random right\\
12 & $-0.7977905$ & $1.3\cdot 10^{-3}$ & $6.9\cdot 10^{-16}$ & $5.7\cdot 10^{-4}$ & $0.37$ & Random right\\
13 & $3.177487$ & $3.7\cdot 10^{-4}$ & $4.6\cdot 10^{-17}$ & $3.8\cdot 10^{-3}$ & $0.65$ & Random right\\
14--15 & $-0.1537092 \pm 0.8144710$ & $2.7\cdot 10^{-3}$ & $3.1\cdot 10^{-3}$ & $8.5\cdot 10^{-17}$ & $0.31$ & Random left\\
16 & $-1.672076$ & $4.3\cdot 10^{-4}$ & $9.9\cdot 10^{-4}$ & $7.9\cdot 10^{-17}$ & $0.45$ & Random left\\
\hline
\end{tabular}}
\end{table}

In \cref{tab:ex62} we see a clear gap between regular and random eigenvalues,
\[
\max_{j=1,\ldots,10}(\max(\alpha_j,\beta_j))=1.4\cdot 10^{-16} \ll \min_{j=11,\ldots,16}(\max(\alpha_j,\beta_j))=5.7\cdot 10^{-4},
\]
and Algorithm~1 correctly extracts 10 true eigenvalues.
Eigenvalue $1$ of multiplicity $7$ is computed as seven simple eigenvalues $\lambda_1,\ldots,\lambda_7$. The first one is related to
the Jordan block $J_1(1)$ and $|\lambda_1-1|=5.4\cdot 10^{-15}$. The
eigenvalues $\lambda_2,\lambda_3$ come from the block $J_2(1)$ and
$|\lambda_2-1|=|\lambda_3-1|=7.6\cdot 10^{-8}$, while
$\lambda_4,\lambda_5,\lambda_6,\lambda_7$ originate from the block $J_4(1)$
and $|\lambda_j-1|=1.4\cdot 10^{-4}$ for $j=4,5,6,7$. This is exactly the
behavior that we have predicted in the discussion after Algorithm~3, and this
occurs also in the magnitudes of the gaps of the eigenvalues
$\lambda_2,\ldots,\lambda_7$. The values $\gamma_j$ also follow the predicted
pattern: the products $\gamma_j\cdot {\rm gap}_j$ for
$j=2,\ldots,7$ are all of magnitude $10^{-17}$.
\end{example}

\begin{example}\rm Often, even if the matrix pencil $A-\lambda B$ is singular,
the routine {\tt eig(A,B)} implemented in Matlab still accurately computes true
eigenvalues. In this example we present a matrix pencil, for which
{\tt eig(A,B)} fails completely, while Algorithm~1 or Algorithm~2 both successfully extract
the finite eigenvalues.
The pencil \cite[Ex.~6.2]{HMP19} has the form
\begin{equation}\label{eq:AB5x5}A-\lambda B =
\smath{\left[\begin{array}{rrrrr}
 1 & -2 & 100 & 0 & 0 \\
 1 & 0 & -1 & 0 & 0 \\
 0 & 0 & 0 & 1 & -75 \\
 0 & 0 & 0 & 0 & 2 \\
 0 & 0 & 0 & 0 & 0 \end{array}\right]}
-\lambda \, \smath{\left[\begin{array}{rrrrr}
 0 & 1 & 0 & 0 & 0 \\
 0 & 0 & 1 & 0 & 0 \\
 0 & 0 & 0 & 1 & 0 \\
 0 & 0 & 0 & 0 & 1 \\
 0 & 0 & 0 & 0 & 0 \end{array}\right]}
\end{equation}
and its KCF is composed of the blocks $L_0^\top$, $L_2$, $J_1(1)$, and $J_1(2)$.
For this pencil {\tt eig(A,B)} returns eigenvalues $-2$, $\infty$, {\tt NaN},
{\tt NaN}, {\tt NaN} and none of them is close to any of the correct eigenvalues
$1$ and $2$. On the other hand, Algorithm~1 returns the values in
\cref{tab:ex72augm} and both eigenvalues $1$ and $2$ are correctly identified.

\begin{table}[htb!] \label{tab:ex72augm}
\centering
\caption{Results of Algorithm~1 (projected pencil) followed by Algorithm~3 applied to the singular pencil \eqref{eq:AB5x5}.}
{\footnotesize \begin{tabular}{c|clllll} \hline \rule{0pt}{2.3ex}%
$j$ & $\lambda_j$ & $\quad \ \gamma_j$ & $\quad \ \alpha_i$ & $\quad \ \beta_j$ & ${\rm gap}_j$ & Type \\[0.5mm]
\hline \rule{0pt}{2.5ex}%
1 & $1.000000$ & $4.8\cdot 10^{-3}$ & $3.1\cdot 10^{-16}$ & $1.4\cdot 10^{-17}$ & $0.71$ & Finite true \\
2 & $2.000000$ & $5.0\cdot 10^{-3}$ & $3.4\cdot 10^{-16}$ & $6.8\cdot 10^{-18}$ & $0.45$ & Finite true \\
3--4 & $-0.1672036 \pm 16.44603 i$ & $5.5\cdot 10^{-2}$ & $7.3\cdot 10^{-17}$ & $2.8\cdot 10^{-2}$ & $1.01$ & Random right \\
\hline
\end{tabular}}
\end{table}

To be fair, let us comment that if we multiply $A$ and $B$ by random orthogonal $5\times 5$ matrices $Q$ and $Z$, then
{\tt eig(Q*A*Z,Q*B*Z)} returns $5$ finite eigenvalues that include eigenvalues $\mu_1$ and $\mu_2$ such that
$|\mu_1-1|=4.1\cdot 10^{-12}$
and $|\mu_2-2|=2.6\cdot 10^{-12}$.

In \cite{KreGlib22} a method is proposed that perturbs a singular matrix pencil
$A-\lambda B$ into $(A+\varepsilon E_1) -\lambda \, (B+\varepsilon E_2)$, where $\varepsilon$ is small (default value $\varepsilon=10^{-8}$) and $E_1$, $E_2$ are {\em full-rank} random matrices such that $\|E_1\|=\|E_2\|=1$.
Finite eigenvalues are selected based on the condition numbers of the computed eigentriplets $(\lambda_i,x_i,y_i)$. An advantage of the method from
\cite{KreGlib22} is that it does not require the normal rank.
A disadvantage is that the true eigenvalues are perturbed, and since the
perturbation $\varepsilon$ is not so small (selecting it too small makes it harder to identify the true eigenvalues), the eigenvalues
are computed less accurately compared to Algorithm~1, which theoretically does not move the true eigenvalues.
Indeed, for this example, the method from \cite{KreGlib22} computes the finite eigenvalues
$\widetilde \lambda_1$ and $\widetilde \lambda_2$ such that
$|\widetilde\lambda_1-1|=3.3\cdot 10^{-7}$ and $|\widetilde\lambda_2-2|=4.7\cdot 10^{-7}$. The eigenvalues $\lambda_1$ and $\lambda_2$ computed by Algorithm~1 in \cref{tab:ex72augm} are considerably more accurate:
$|\lambda_1-1|=6.9\cdot 10^{-13}$ and $|\lambda_2-2|=7.6\cdot 10^{-13}$.
\end{example}

\begin{example}\label{sec:poly}\rm
We now consider in more detail a specific source of singular generalized
eigenvalue problems arising from polynomial equations.
Let $p_1(\lambda,\mu)=0$, $p_2(\lambda,\mu)=0$ be a bivariate polynomial system with polynomials of total degree $d$.
This system has generically $d^2$ roots, which are all finite.
The uniform determinantal representation of \cite{BDD17} produces
$(2d-1)\times (2d-1)$ matrices $A_i, B_i, C_i$ for $i=1,2$ such that
$p_i(\lambda,\mu) = \det(A_i+\lambda B_i+\mu C_i)$ for $i=1,2$,
and solutions $(\lambda,\mu)$ are eigenvalues of the singular two-parameter eigenvalue problem
\[
\begin{aligned}
(A_1+\lambda B_1+\mu C_1)\,x_1&=0,\cr
(A_2+\lambda B_2+\mu C_2)\,x_2&=0.
\end{aligned}
\]
For more details on (singular) two-parameter eigenvalue problems see, e.g.,
\cite{Atk72,MPl09,KoPl22}.
If we just want to compute the part $\lambda$ of solutions $(\lambda,\mu)$, this results in a
singular generalized eigenvalue problem $(\Delta_1-\lambda \Delta_0)\,z=0$, where
$\Delta_1 = C_1 \otimes A_2 - A_1 \otimes C_2$ and
$\Delta_0 = B_1 \otimes C_2 - C_1 \otimes B_2$ are of size $(2d-1)^2\times (2d-1)^2$.
For a computed $\lambda$ we can then compute $\mu$ from the
pencils $A_1+\lambda B_1+\mu C_1$ and $A_2+\lambda B_2+\mu C_2$; for details see, e.g., \cite[Algorithm~2]{HMP19}.

For a numerical example we consider a system of bivariate polynomials from \cite[Ex.~7.1]{HMP19}
\begin{align*}
p_1(\lambda,\mu) & = 1 + 2\lambda + 3\lambda + 4\lambda^2 +
5\lambda \mu + 6\mu^2 + 7\lambda^3 + 8\lambda^2\mu +
9\lambda\mu^2 + 10\mu^3 = 0,\\
p_2(\lambda,\mu) & = 10 + 9\lambda + 8\mu + 7\lambda^2 + 6\lambda\mu +
 5\mu^2 + 4\lambda^3 + 3\lambda^2\mu + 2\lambda\mu^2 + \mu^3 = 0.
\end{align*}
A uniform determinantal representation from \cite{BDD17} gives
a two-parameter eigenvalue problem of the form
\begin{equation}\label{ex:73poly}
\begin{aligned}
A_1+\lambda B_1+\mu C_1 &=\smath{\left[\begin{array}{ccccc}
 0 & 0 & 4 + 7\lambda & \ph-1 & \ph-0\\
 0 & 5 + 8\lambda & 2 & -\lambda & \ph-1 \\
 6 + 9\lambda + 10\mu & 3 & 1 & \ph-0 & -\lambda\\
 1 & -\mu & 0 & \ph-0 & \ph-0\\
 0 & 1 & -\mu & \ph-0 & \ph-0 \end{array}\right]}, \\[1mm]
A_2+\lambda B_2+\mu C_2 &=\smath{\left[\begin{array}{ccccc}
 0 & 0 & 7 + 4\lambda & \ph-1 & \ph-0\\
 0 & 6 + 3\lambda & 9 & -\lambda & \ph-1 \\
 5 + 2\lambda + \mu & 8 & 10 & \ph-0 & -\lambda\\
 1 & -\mu & 0 & \ph-0 & \ph-0\\
 0 & 1 & -\mu & \ph-0 & \ph-0 \end{array}\right]},
\end{aligned}
\end{equation}
where $p_i(\lambda,\mu)=\det(A_i+\lambda B_i+\mu C_i)$ for $i=1,2$.
There are $9$ solutions $(\lambda,\mu)$ and we can compute the $\lambda$-parts as eigenvalues of the
corresponding singular generalized eigenvalue problem $(\Delta_1-\lambda \Delta_0)\,z=0$ of size $25\times 25$ that
has normal rank $21$. \Cref{tab:ex73} contains the results obtained from Algorithm~1 and Algorithm~3. Compared to \cite[Ex.~7.1]{HMP19}, where we have solved the same problem with
the rank-completing algorithm that requires solutions of a problem of size $25\times 25$,
we now use the projection method that leads to a smaller generalized eigenvalue problem of size $21\times 21$.
The gap between finite and infinite eigenvalues can clearly be seen. Notice also that all eigenvalues of the
projected pencil are true eigenvalues. We show below that this holds for a generic system of polynomials
if we use the same uniform determinantal representation.

\begin{table}[htb!] \label{tab:ex73}
\centering
\caption{Results of Algorithm~1 (projected pencil) followed by Algorithm~3 applied to the pencil
$\Delta_1-\lambda \Delta_0$ related to \eqref{ex:73poly}.}
{\footnotesize \begin{tabular}{c|clllll} \hline \rule{0pt}{2.3ex}%
$j$ & $\lambda_j$ & $\quad \ \gamma_j$ & $\ \alpha_i$ & $\beta_j$ & ${\rm gap}_j$ & Type \\[0.5mm]
\hline \rule{0pt}{2.5ex}%
1--2 & $-1.133090 \pm 0.3011559i$ & $9.5\cdot 10^{-5}$ & $2.5\cdot 10^{-17}$ & $8.6\cdot 10^{-16}$ & $0.39$ & Finite true \\[-1.5mm]
\vdots & \vdots & \vdots & \vdots & \vdots & \vdots & \vdots \\
8--9 & $-0.5608503\pm2.035545i$ & $5.1\cdot 10^{-6}$ & $2.4\cdot 10^{-17}$ & $1.7\cdot 10^{-16}$ & $0.44$ & Finite true \\
10 & $-2556.290$ & $1.0\cdot 10^{-20}$ & $7.4\cdot 10^{-17}$ & $2.1\cdot 10^{-16}$ & $1.00$ & Infinite true \\[-1.5mm]
\vdots & \vdots & \vdots & \vdots & \vdots & \vdots & \vdots \\
21 & $\infty$ & $0.0$ & $5.9\cdot 10^{-17}$ & $5.5\cdot 10^{-17}$ & $1.00$ & Infinite true \\
\hline
\end{tabular}}
\end{table}

Some properties of singular pencils related to the determinantal representation of bivariate polynomials of total degree $d$ \cite{BDD17} are presented in \cref{tab:1}. The associated singular pencil has size $n = (2d-1)^2$ and rank-completion
index $k=(d-1)^2$. The perturbed pencil \eqref{pert} therefore has $k$ prescribed eigenvalues.
The remaining $n-k$ eigenvalues are all true, i.e., there are no random eigenvalues.
Of those $n-k$ eigenvalues, $d^2$ correspond to roots of the polynomial system,
while the remaining true eigenvalues are $\infty$ and have no meaning for the original problem.

\begin{table}[!htbp]
\begin{footnotesize}
\begin{center}
\caption{Properties of singular pencils coming from the uniform determinantal representations \cite{BDD17}
of bivariate polynomials of total degree $d$: size of the pencil $n$, number of true eigenvalues $n_{\rm f}+n_{\rm i}$, number of infinite eigenvalues $n_{\rm i}$, number of prescribed eigenvalues $k$.
Note that this type of application does not have random eigenvalues ($n_{\rm r}=0$). Cf.~also \cref{tab:nreigs}.}
\label{tab:1}
\begin{tabular}{cccccccc} \hline \rule{0pt}{2.1ex}%
$d$ & $n$ & \#True & \#$\infty$ & \#Roots & \#Random & \#Prescr.~($k$) & $k/n$ \\ \hline \rule{0pt}{2.3ex}%
\ph14 & \ph149 & \ph140 & \ph124 & \ph116 & 0 & \ph19 & 0.184 \\
\ph15 & \ph181 & \ph165 & \ph140 & \ph125 & 0 & 16 & 0.198 \\
\ph16 & 121    & \ph196 & \ph160 & \ph136 & 0 & 25 & 0.207 \\
\ph17 & 169    & 133    & \ph184 & \ph149 & 0 & 36 & 0.213 \\
10    & 361    & 280    & 180    & 100    & 0 & 81 & 0.224 \\
\hline
\end{tabular}
\end{center}
\end{footnotesize}
\end{table}

It turns out that $A_1$ (and hence $A_2$) is of full rank $2d-1$,
and $B_1$ and $C_1$ (and hence $B_2$ and $C_2$) are of rank $d$.
Then $C_1 \otimes A_2$ and $A_1 \otimes C_2$ are of rank $2d^2-d$.
Also, $\Delta_1 = C_1 \otimes A_2 - A_1 \otimes C_2$ has $(d-1)^2$ zero rows
and $(d-1)^2$ zero columns, and is, apart from this, of full rank $3d^2-2d$.
Therefore $k = (2d-1)^2 - (3d^2-2d) = (d-1)^2$.
This result means that we can reduce the size of the pencil substantially
using Algorithm~1. We do have to compute the matrix products $W^*\Delta_1Z$ and $W^*\Delta_0Z$,
but this computation, although of complexity $O(d^6)$, is computationally
negligible compared to the the task of finding the eigenvalues and the right and left
eigenvectors of the pencil $W^*\Delta_1Z-\lambda \, W^*\Delta_0Z$.

We have for the asymptotic ratio of singularity, for large total degree $d$,
\[
\frac{k}{n} = \frac{(d-1)^2}{(2d-1)^2} \ \to \ \frac{1}{4}.
\]
Also, the number of true eigenvalues that are roots of the polynomial system
converges to the same ratio:
\[
\frac{\#\text{roots}}{n} = \frac{d^2}{(2d-1)^2} \ \to \ \frac{1}{4}.
\]
Moreover, we see that, interestingly, nearly 50\% of the eigenvalues
($\frac{1}{2}(n-1)$) are true infinite eigenvalues, and there are no random eigenvalues in this type of problem.
This happens because the KCF of $\Delta_1-\lambda \Delta_0$ contains only right singular blocks $L_0$ and left singular blocks $L_0^\top$ (corresponding to the zero rows and columns).

Although the singular pencil $\Delta_1-\lambda \, \Delta_0$ looks perfect for Algorithm~1, it may
happen that the method fails to identify all $d^2$ true eigenvalues if some of the eigenvalues
$\lambda$ are very ill-conditioned. Also, the
infinite eigenvalues appear in large blocks that increase in size with $d$ which makes it
difficult to properly identify finite eigenvalues. A possible solution is
to use arithmetic in higher precision (for instance by using the Multiprecision Computing Toolbox \cite{Advanpix}).
The construction that leads from a system of two bivariate polynomials to a
singular pencil $\Delta_1-\lambda \Delta_0$ can be seen as a type of resultant method.
It appears that this approach has the same problems as other resultant methods for finding
roots of systems of bivariate polynomials, as explained in \cite{NofTown}.
\end{example}

\begin{example}\label{ex:doubleeig}\rm Given two matrices $A$ and $B$ of size $n$, in \cite[Ex.~6.4]{HMP19} we have shown how to apply the rank-completing method to find values $\lambda$ such that $A+\lambda B$ has a double eigenvalue.
The idea from \cite{MPl14} is to look for independent vectors $x$ and $y$ such that
\begin{equation}\label{eq:dblorig}
\begin{split}
(A + \lambda B - \mu I) \, x & = 0, \\
(A + \lambda B - \mu I)^2 \, y & = 0,
\end{split}
\end{equation}
which is a quadratic two-parameter eigenvalue problem.
We can linearize \cref{eq:dblorig} as a linear two-parameter eigenvalue problem and solve the associated
singular eigenvalue problem $(\Delta_1-\lambda \Delta_0)z=0$ of size $3n^2\times 3n^2$; for details see \cite{HMP19}. The normal rank of
this pencil is $3n^2-n$ and, contrary to the previous example, infinite eigenvalues, which have multiplicity $n^2$, appear in
blocks $N_1$. Infinite eigenvalues are thus not an issue and the rank-completing algorithm from \cite{HMP19} performs better than
the staircase method.

Now we will show how to compute the values $\lambda$ even more efficiently.
Generically, if $A+\lambda B$ has a double eigenvalue, it is non-semisimple. If we assume that all double eigenvalues are non-semisimple, then we can write the problem as a linear two-parameter eigenvalue problem
\begin{equation}
\begin{split}\label{eq:dbeig}
(A + \lambda B - \mu I) \, x & = 0, \\
\left(\left[\begin{matrix}\ph{-}A & 0\cr -I & A\end{matrix}\right] + \lambda \left[\begin{matrix}B & 0\cr 0 & B\end{matrix}\right] - \mu
\left[\begin{matrix}I & 0\cr 0 & I\end{matrix}\right]\right) \, \left[\begin{matrix}z \cr y\end{matrix}\right] & = 0.
\end{split}
\end{equation}
The second equation of \cref{eq:dbeig} reads $(A+\lambda B-\mu I)\,z=0$ and $(A+\lambda B-\mu I)\,y=z$, which
means that vectors $z$ and $y$ form a Jordan chain for the $J_2(\mu)$ block of matrix $A+\lambda B$.
The problem \cref{eq:dbeig} is associated to a singular generalized eigenvalue problem
$(\widetilde \Delta_1-\lambda \widetilde \Delta_0)\,w=0$,
where
\begin{equation}\label{eq:Deltadbl}
\widetilde \Delta_1=A \otimes \left[\begin{matrix}I & 0\cr 0 & I\end{matrix}\right]
-I\otimes \left[\begin{matrix}\ph{-}A & 0\cr -I & A\end{matrix}\right],\quad
\widetilde \Delta_0 = I\otimes \left[\begin{matrix}B & 0\cr 0 & B\end{matrix}\right] -
B\otimes \left[\begin{matrix}I & 0\cr 0 & I\end{matrix}\right]
\end{equation}
are of size $2n^2\times 2n^2$. The problem has $2n^2-n$ as its normal rank,
$n(n-1)$ finite eigenvalues that are solutions we are interested in,
and $n$ infinite eigenvalues, again in blocks $N_1$. An improvement over \cref{eq:dblorig} and \cite{HMP19} is that using Algorithm~1 we have to solve a generalized eigenproblem of size $2n^2-n$, while in \cite{HMP19} we had to solve a generalized eigenproblem of size $3n^2$. This enables us to solve the problem for even larger matrices. For random matrices $A$ and $B$ of size $n=20$, the approach from \cite{HMP19} uses initial matrices of size $1200\times 1200$ and runs for $6.58$s, while the new approach, where we apply Algorithm~1 to $800\times 800$ matrices \cref{eq:Deltadbl} and we have to solve a generalized eigenproblem of size $780\times 780$, requires just $1.72$s. In both cases we found all $380$ solutions without any problems. The regular part is well separated from the random eigenvalues and the same applies to the finite and infinite eigenvalues.
\end{example}

\begin{example}\label{ex:recteig}\rm All presented algorithms can also be applied to rectangular pencils, where
$A,B\in\C^{m,n}$ and $m\ne n$. The most suitable one is Algorithm~1, in particular when the difference between $m$ and $n$ is large, because
we have to add an appropriate number of zero rows or columns to make the pencil square
for Algorithm~2 or \cite[Algorithm~1]{HMP19}, which is not needed for Algorithm~1.

For an example we consider the problem \cite[Example 11]{EmaVD82} of computing transmission zeros of a control system of the form
\begin{align*}
\dot x&=Ax+Bu, \\
y&=Cx+Du,
\end{align*}
where
\[
A = \smath{\left[\begin{matrix}-2 &-6 & \ph{-}3 & -7 & \ph{-}6\cr
\ph{-}0 & -5 & \ph{-}4 & -4 & \ph{-}8\cr
\ph{-}0 & \ph{-}2 & \ph{-}0 & \ph{-}2 & -2 \cr
\ph{-}0 & \ph{-}6 & -3 & \ph{-}5 & -6\cr
\ph{-}0 & -2 & \ph{-}2 & -2 &\ph{-}5\end{matrix}\right]}, \quad
B = \smath{\left[\begin{matrix}-2 & \ph{-}7\cr
-8 & -5 \cr
-3 & \ph{-}0 \cr
\ph{-}1 & \ph{-}5\cr
-8 & \ph{-}0\end{matrix}\right]}, \quad
C = \smath{\left[\begin{matrix}0 &-1 & \ph{-}2 & -1 & -1\cr
1 & \ph{-}1 & \ph{-}1 & \ph{-}0 & -1\cr
0 & \ph{-}3 & -2 & \ph{-}3 & -1\end{matrix}\right]},
\]
and $D$ is a zero $2\times 3$ matrix.
We have to find the eigenvalues of the system pencil
\begin{equation}\label{eq:sys_mat}
S(\lambda)=
\left[\begin{matrix}\lambda I - A & B\cr -C & D\end{matrix}\right],
\end{equation}
which is rectangular due to a different number of input and output variables.
Algorithm~1 returns the values in
\cref{tab:ex75proj} and correctly identifies transmission zeros $3$ and $-4$.

\begin{table}[htb!] \label{tab:ex75proj}
\centering
\caption{Results of Algorithm~1 (projected pencil) followed by Algorithm~3 applied to the singular pencil \eqref{eq:sys_mat}.}
{\footnotesize \begin{tabular}{c|clllll} \hline \rule{0pt}{2.3ex}%
$j$ & $\lambda_j$ & $\quad \ \gamma_j$ & $\quad \ \alpha_i$ & $\quad \ \beta_j$ & ${\rm gap}_j$ & Type \\[0.5mm]
\hline \rule{0pt}{2.5ex}%
1 & $ 3.000000$ & $2.0\cdot 10^{-2}$ & $5.7\cdot 10^{-17}$ & $8.8\cdot 10^{-17}$ & $1.52$ & Finite true \\
2 & $-4.000000$ & $6.3\cdot 10^{-3}$ & $2.6\cdot 10^{-16}$ & $4.9\cdot 10^{-17}$ & $0.53$ & Finite true \\
3--4 & $-14.50574 \pm 1.114237\cdot 10^{8} i$ & $1.2\cdot 10^{-23}$ & $3.6\cdot 10^{-17}$ & $1.4\cdot 10^{-16}$ & $1.00$ & Infinite true \\
5 & $-2.021401\cdot 10^{16}$ & $2.9\cdot 10^{-33}$ & $3.6\cdot 10^{-17}$ & $1.4\cdot 10^{-16}$ & $1.00$ & Infinite true \\
6 & $\infty$ & $0.0$ & $1.4\cdot 10^{-17}$ & $6.9\cdot 10^{-17}$ & $1.00$ & Infinite true \\
7 & $-1.815461$ & $3.1\cdot 10^{-2}$ & $1.2\cdot 10^{-2}$ & $0.0$ & $1.05$ & Random left \\
\hline
\end{tabular}}
\end{table}
\end{example}

\section{Normal rank}\label{sec:nrank} For both methods presented in this paper as well as for the original rank-updating approach from \cite{HMP19} it is important to correctly determine the normal rank of an $n\times n$ singular pencil $A-\lambda B$. For some applications, for instance for the singular problem related to polynomial systems from \cref{sec:poly} as well as for the double eigenvalue problem in \cref{ex:doubleeig}, the normal rank is known in advance and this is not an issue.

If the normal rank is not known in advance, we can determine it by computing $\text{rank}(A +\eta B)$ for a random value $\eta\in \C$. To increase
the probability that the normal rank is accurate, we may repeat the computation several times. Although
the computation of rank has complexity ${\cal O}(n^3)$, which is technically the same order as the complexity to compute eigenvalues and right and left eigenvectors of a matrix pencil of the same size, the constant is much smaller, and in practice the computation of the rank is a negligible part of the method. For instance, for two random $1000\times 1000$ matrices $A$ and $B$ in Matlab, running
{\tt rank(A+randn*B)} takes $0.06$s while {\tt [X,D,Y]=eig(A,B)} takes $3.45$s.

Even with a repeated computation it might happen that we fail to determine the normal rank correctly. In that case it is more likely that the normal rank is underestimated than overestimated, as this may typically happen when $\eta$ is selected close to an eigenvalue of $A-\lambda B$. We will show that, fortunately, if the supplied normal rank is an underestimation, we can detect this from the numerical results. The following result from \cite{DD16lowrank} can be applied to the rank-completing algorithm from
\cite{HMP19}, where nonzero multiplicities in the theorem correspond to sizes of the Jordan blocks in case of a finite eigenvalue or to sizes of the infinite blocks in case of an infinite eigenvalue.

\begin{theorem}\label{thm:deteran_dopico}\cite[Thm.~4.3]{DD16lowrank}
Let $\lambda_0$ be an eigenvalue (finite or infinite) of a regular $n\times n$ complex matrix pencil $A-\lambda B$, with nonzero
multiplicities $0<d_1\le \cdots \le d_m$. Let $0<s<m$ be an integer, and denote by $\mathbb P_s$ the set of all $n\times n$ matrix
pencils with normal rank at most $s$. Then there is a generic set ${\cal G}$ in $\mathbb P_s$ such that for all
$A_1-\lambda B_1\in{\cal G}$, the partial multiplicities of the perturbed pencil $A+A_1-\lambda(B+B_1)$ at $\lambda_0$ are
$0<d_1\le\cdots\le d_{m-s}$.
\end{theorem}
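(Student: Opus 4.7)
The plan is to adapt the classical proof strategy for generic low-rank perturbation theorems for matrices (such as Moro--Burke--Overton for nilpotent matrices) to the pencil setting. First I would reduce to a local problem at $\lambda_0$. Since $A-\lambda B$ is regular, the Weierstrass canonical form provides invertible $P,Q$ such that
\[
P(A-\lambda B)Q=\begin{bmatrix} J(\lambda_0)-\lambda E & 0\\ 0 & R(\lambda)\end{bmatrix},
\]
where the first block collects all Jordan data at $\lambda_0$ (with $E=I$ and $J(\lambda_0)$ a direct sum of Jordan blocks of sizes $d_1\le\cdots\le d_m$ in the finite case, or $E$ nilpotent and $J(\lambda_0)=I$ in the infinite case), and $R(\lambda)$ is regular and invertible at $\lambda_0$. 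The transformed perturbation $P(A_1-\lambda B_1)Q$ still lies in $\mathbb P_s$, and since $R(\lambda_0)$ is invertible, a generic perturbation keeps the second block regular in a neighborhood of $\lambda_0$. Thus the new partial multiplicities at $\lambda_0$ are determined entirely by the perturbed nilpotent block.

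Second, I would analyze the effect on the Weyr characteristic. The partial multiplicities $d_1\le\cdots\le d_m$ correspond to Weyr invariants $w_j:=|\{i:d_i\ge j\}|$, and removing the $s$ largest Segre parts is equivalent to the Weyr move $w_j\mapsto\max(w_j-s,0)$, which is precisely the generic effect of a rank-$s$ constant-matrix perturbation on a nilpotent structure. The bridge to the pencil case is the observation that
\[
(A+A_1)-\lambda_0(B+B_1)=(A-\lambda_0 B)+(A_1-\lambda_0 B_1),
\]
where $A_1-\lambda_0 B_1$ is a matrix of rank at most $s$ because the pencil $A_1-\lambda B_1$ has normal rank at most $s$. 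Applying the classical matrix rank-$s$ perturbation theorem to this evaluation, together with its higher analogues that control $\dim\ker(M^{j})$ for the local matrix $M$ attached to $\lambda_0$, yields the desired Segre sequence $(d_1,\ldots,d_{m-s})$.

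The main obstacle is the third step: the genericity argument within the determinantal variety $\mathbb P_s$. Because $\mathbb P_s$ is itself singular, ``generic'' must be interpreted via a proper algebraic subset of a suitable parameterization. I would parameterize a Zariski-dense open subset of $\mathbb P_s$ by writing $A_1-\lambda B_1=U(D_A-\lambda D_B)V^*$ for $U,V\in\C^{n,s}$ and $s\times s$ regular diagonal $D_A-\lambda D_B$, mirroring the rank-completing ansatz of \cref{thm:nfp}; express the failure of the desired Segre structure as the vanishing of certain determinantal polynomials in the entries of $U,V,D_A,D_B$; and exhibit one explicit choice of these parameters, targeting basis vectors that expose the top $s$ Jordan chains at $\lambda_0$, for which the theorem's conclusion holds. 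This shows those polynomials are not identically zero, and their non-vanishing locus is therefore a generic subset of the parameter space. The delicate point is to verify that this generic set of parameters maps onto a generic subset of $\mathbb P_s$, which requires the parameterization to cover a Zariski-open portion of $\mathbb P_s$ and that the fibers are well-behaved on that open set.
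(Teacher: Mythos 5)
First, note that the paper does not prove this statement at all: it is quoted verbatim as \cite[Thm.~4.3]{DD16lowrank} and used as an imported tool in \cref{sec:nrank}, so there is no internal proof to compare against. Your task is therefore to reconstruct De Ter\'an and Dopico's theorem itself, and your sketch, while a reasonable high-level plan, has gaps that are not merely technical.

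The decisive one is in your third step. The pencils of the form $U(D_A-\lambda D_B)V^*$ with $U,V\in\C^{n,s}$ and a diagonal regular core do \emph{not} parameterize a Zariski-dense subset of $\mathbb P_s$. Such a pencil forces $A_1$ and $B_1$ to share an at most $s$-dimensional column space and an at most $s$-dimensional row space, and its KCF consists of a regular part of size $s$ together with $L_0$ and $L_0^\top$ blocks only. A general element of $\mathbb P_s$ need not have this structure (e.g., $e_1e_1^*-\lambda\, e_1e_2^*$ has normal rank one but the row spaces of $A_1$ and $B_1$ together span a two-dimensional space); in fact the generic eigenstructure in $\mathbb P_s$ has \emph{no} regular part and nonzero minimal indices, so your parameterization lands in a proper closed subvariety of $\mathbb P_s$. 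Proving the conclusion for generic $(U,V,D_A,D_B)$ would establish it only on that subvariety and says nothing about genericity in $\mathbb P_s$, which is exactly what the theorem asserts. Two further steps are also unjustified as written. In step one, $P(A_1-\lambda B_1)Q$ is not block diagonal, so the perturbed pencil does not decouple and the partial multiplicities at $\lambda_0$ are not those of the perturbed $(1,1)$ block alone; one needs a Schur-complement (local Smith form) argument exploiting invertibility of the $(2,2)$ block at $\lambda_0$, after which the effective perturbation of the local block is $\lambda$-dependent and its rank must be controlled anew. In step two, the Weyr characteristic of $\lambda_0$ as an eigenvalue of a \emph{pencil} is not given by kernels of powers of the single matrix $(A+A_1)-\lambda_0(B+B_1)$; it is read off from ranks of block-Toeplitz matrices built from the local expansion of the pencil, so the ``classical matrix rank-$s$ theorem applied to the evaluation'' only controls the geometric multiplicity $w_1$, not the full Segre sequence. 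Repairing the argument essentially requires the machinery of \cite{DD16lowrank}: the correct identification of the generic eigenstructure of $\mathbb P_s$ and rank bounds for the block-Toeplitz characterization of partial multiplicities.
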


If we apply the rank-completing algorithm from \cite{HMP19} and underestimate the normal rank, then the value $k$ that we use in the perturbation \eqref{pert} is
$k=n-{\rm nrank}(A,B)+s$ for $s>0$. We
can apply \cref{thm:deteran_dopico} in a way, that in \eqref{pert} we first update the singular pencil using a perturbation of
normal rank $n-{\rm nrank}(A,B)$ to a regular pencil, for which we
can apply the theorem, and then add a perturbation of normal rank $s$.

\begin{remark}\label{rem:sub_81}\rm We observe a behavior similar to \cref{thm:deteran_dopico} when we
project an $n\times n$ regular pencil $A-\lambda B$ to an $(n-s)\times (n-s)$ matrix pencil
 $\widetilde A-\lambda \widetilde B:=U^*\!AV-\lambda \, U^*BV$, where $U$ and $V$ are random $n\times (n-s)$ matrices with
orthonormal columns and $s\ge 1$. A brief explanation for the case of a finite eigenvalue is as follows; the case with
an infinite eigenvalue is similar.

If $\lambda_0$ is an eigenvalue of $A-\lambda B$ of geometric multiplicity $m>s$ with corresponding Jordan blocks $J_{d_1}(\lambda_0),\ldots,J_{d_m}(\lambda_0)$,
ordered by size so that $d_1\le \cdots \le d_m$, then
generically we have that ${\rm dim}\left({\rm Im}(V)\cap {\rm Ker}(A-\lambda_0 B)\right)=m-s$, therefore
$\lambda_0$ is an eigenvalue of $\widetilde A-\lambda \widetilde B$ of geometric multiplicity $m-s$.

To show that generically the KCF of $\widetilde A-\lambda \widetilde B$ contains
the smallest $m-s$ Jordan blocks,
we apply a similar argument as for the geometric multiplicity.
We know that for $r=1,\ldots,m$ the KCF of $A-\lambda B$ has exactly
$\dim\left({\rm Ker}\left((A-\lambda_0 B)^{r}\right)/ \, {\rm Ker}\left((A-\lambda_0 B)^{r-1}\right)\right)$ blocks
$J_d(\lambda_0)$ of size at least $r$. If we intersect these subspaces with
${\rm Im}(V)$, we see that if the KCF of $A-\lambda B$ has $m_r>s$ Jordan blocks
of size at least $r$, then the KCF of $\widetilde A-\lambda \widetilde B$
generically has $m_r-s$ Jordan blocks of size at least $r$ for $r=1,\ldots,m$.
Therefore, the KCF of $\widetilde A-\lambda \widetilde B$ generically contains
the Jordan blocks $J_{d_1}(\lambda_0),\ldots,J_{d_{m-s}}(\lambda_0)$.
\end{remark}

Suppose that the estimated normal rank is equal to ${\rm nrank}(A,B)-s$, where
$s>0$. In such case it follows from
\cref{thm:deteran_dopico} and \cref{rem:sub_81} that
all three methods generically return only multiple finite eigenvalues with a
geometric multiplicity $m>s$.
If $\lambda_0$ is an eigenvalue of $A-\lambda B$ with Jordan blocks
$J_{d_1}(\lambda_0),\ldots,J_{d_m}(\lambda_0)$, ordered so that
$d_1\le \cdots \le d_m$, then all three methods return $d_1+\cdots+d_{m-s}$
eigenvalues close to $\lambda_0$. The same applies to $\infty$ if there are more
than $s$ corresponding infinite blocks in the KCF. All remaining eigenvalues are
identified to be of the prescribed type, i.e., both $\alpha_i\ne 0$ and
$\beta_i\ne 0$, where
$\alpha_i,\beta_i$ are as defined in Algorithm~1 and Algorithm~2, and
$\alpha_i=\|V^*x_i\|$ and
$\beta_i=\|U^*y_i\|$ for \cite[Algorithm~1]{HMP19}.
For the later case, it is easy to see that generically, if
$\dim({\rm Im}(V)) > n - {\rm nrank}(A-\lambda B)$, then a right eigenvector $x$ of \eqref{pert} is
not
orthogonal to $V$ unless it is an
 eigenvector for an eigenvalue (finite or inifinite) of $A-\lambda B$ with geometric multiplicity larger than $s$,
 and similarly $U^*y\ne 0$ for a left eigenvector $y$.

Since we know that there should be no eigenvalues
of prescribed type in Algorithm~1, a presence of such eigenvalues
clearly indicates that the normal rank has been underestimated. We can apply this
also to \cite[Algorithm~1]{HMP19}, where in such case the number of eigenvalues
of the prescribed type will exceed $n-{\rm nrank}(A,B)$.

\begin{example}\label{ex:under}\rm If we apply Algorithm~1 to the singular pencil from \cref{ex:gaps}, but use
15 for a normal rank instead of the correct 16, we get the values in \cref{tab:underestimate}.

\begin{table}[htb!] \label{tab:underestimate}
\centering
\caption{Results of Algorithm~1 (projection to normal rank) followed by Algorithm~3 applied to the singular pencil from \cref{ex:gaps},
using an underestimated value 15 for the normal rank.}
{\footnotesize \begin{tabular}{r|clllll} \hline \rule{0pt}{2.3ex}%
$j$ & $\lambda_j$ & $\quad \ \gamma_j$ & $\quad \ \alpha_i$ & $\quad \ \beta_i$ & ${\rm gap}_j$ & Type \\[0.5mm]
\hline \rule{0pt}{2.5ex}%
1 & $1.000000$ & $2.1\cdot 10^{-5}$ & $1.5\cdot 10^{-17}$ & $2.5\cdot 10^{-16}$ & $1.7\cdot 10^{-8}$ & Finite true \\
2 & $1.000000$ & $1.9\cdot 10^{-10}$ & $8.8\cdot 10^{-18}$ & $2.7\cdot 10^{-16}$ & $1.7\cdot 10^{-8}$ & Finite true \\
3 & $1.000000$ & $1.9\cdot 10^{-10}$ & $1.7\cdot 10^{-17}$ & $2.9\cdot 10^{-16}$ & $1.7\cdot 10^{-8}$ & Finite true \\
4 & $\infty$ & $0.0$ & $1.8\cdot 10^{-16}$ & $1.8\cdot 10^{-16}$ & $1.00$ & Infinite true \\
5 & $-0.1883913$ & $7.7\cdot 10^{-3}$ & $1.6\cdot 10^{-2}$ & $2.7\cdot 10^{-2}$ & $2.7\cdot 10^{-2}$ & Prescribed\\
6 & $-0.1063261 + 0.8025023i$ & $3.4\cdot 10^{-3}$ & $7.5\cdot 10^{-3}$ & $1.1\cdot 10^{-3}$ & $0.58$ & Prescribed\\[-1mm]
$\vdots$ & $\vdots$ & $\vdots$ & $\vdots$ & $\vdots$ & $\vdots$ & $\vdots$\\
15 & $1.647482\cdot 10^1$ & $9.7\cdot 10^{-6}$ & $5.7\cdot 10^{-4}$ & $9.1\cdot 10^{-4}$ & $0.90$ & Prescribed\\
\hline
\end{tabular}}
\end{table}

We see a clear gap between eigenvalues identified as true eigenvalues and those identified as prescribed eigenvalues,
\[
\max_{j=1,\ldots,4}(\max(\alpha_j,\beta_j))=2.9\cdot 10^{-16} \ll \min_{j=5,\ldots,15}(\max(\alpha_j,\beta_j))=5.7\cdot 10^{-4}.
\]
The presence of eigenvalues of the prescribed type indicates that the estimated normal rank is incorrect. As predicted, the method
returned 3 instances of eigenvalue 1 and one infinite eigenvalue, because the regular part of the KCF of $A-\lambda B$ contains the blocks $J_1(1),J_2(1),J_4(1),N_1$, and $N_2$. If we use the same method with an estimate 14 for the normal rank, the method returns the eigenvalue $1$ and $13$ prescribed eigenvalues. For an estimate 13 or less for the normal rank, the method does not return any true eigenvalue.
\end{example}

What happens if we overestimate the normal rank? In this case the obtained
pencil (perturbed, projected, or augmented) is still singular and we
can expect similar problems if we apply for instance the QZ algorithm to the
original pencil.
In many cases the algorithm still works and the true eigenvalues are among the computed
ones, but it is not
as reliable as if we use the correct normal rank. In a similar way as we can
detect an underestimated rank from the presence of eigenvalues of prescribed
type, in the case of an overestimated normal rank we get $\alpha_i=0$ and
$\beta_i=0$ for all
eigenvalues and they are all identified as true eigenvalues. In the second phase, when we apply Algorithm~3 to extract the
finite eigenvalues based on their values of $\gamma_i$ and ${\rm gap}_i$, all eigenvalues that are not
finite true eigenvalues satisfy $\gamma_i=0$ and are declared as infinite eigenvalues. To demonstrate this, in the next example we apply Algorithm~1 to the same pencil as in \cref{ex:under}, only this time we overestimate the normal rank.

\begin{example}\label{ex:over}\rm If we apply Algorithm~1 to the singular pencil from \cref{ex:gaps}, but use
17 for a normal rank instead of correct 16, we get the values in \cref{tab:overestimate}.

\begin{table}[htb!] \label{tab:overestimate}
\centering
\caption{Results of Algorithm~1 (projection to normal rank) followed by Algorithm~3 applied to the singular pencil from \cref{ex:gaps},
using an overestimated value 17 for the normal rank.}
{\footnotesize \begin{tabular}{c|clllll} \hline \rule{0pt}{2.3ex}%
$j$ & $\lambda_j$ & $\quad \ \gamma_j$ & $\quad \ \alpha_i$ & $\quad \ \beta_i$ & ${\rm gap}_j$ & Type \\[0.5mm]
\hline \rule{0pt}{2.5ex}%
1 & $\ph{-}1.000000$ & $1.6\cdot 10^{-7}$ & $2.1\cdot 10^{-17}$ & $1.2\cdot 10^{-17}$ & $6.1\cdot 10^{-9}$ & Finite true \\
2 & $\ph{-}1.000000$ & $2.3\cdot 10^{-11}$ & $2.5\cdot 10^{-17}$ & $1.2\cdot 10^{-17}$ & $6.1\cdot 10^{-9}$ & Finite true \\
3 & $\ph{-}1.000000$ & $2.3\cdot 10^{-11}$ & $2.8\cdot 10^{-17}$ & $1.2\cdot 10^{-17}$ & $6.1\cdot 10^{-9}$ & Finite true \\
4--5 & $0.9998994 \pm 1.006233\cdot 10^{-4}i$ & $9.6\cdot 10^{-14}$ & $3.2\cdot 10^{-18}$ & $7.3\cdot 10^{-17}$ & $1.0\cdot 10^{-4}$ & Finite true \\
6--7 & $\ph{-}1.000101 \pm 1.006154\cdot 10^{-4}i$ & $9.6\cdot 10^{-14}$ & $5.1\cdot 10^{-18}$ & $7.0\cdot 10^{-17}$ & $1.0\cdot 10^{-4}$ & Finite true \\
8 & $-0.8711577$ & $9.8\cdot 10^{-17}$ & $1.4\cdot 10^{-17}$ & $2.3\cdot 10^{-17}$ & $0.38$ & Infinite true\\
9-10 & $-4.668463\cdot 10^{-3} \pm 0.2474057i$ & $8.8\cdot 10^{-18}$ & $4.2\cdot 10^{-17}$ & $5.4\cdot 10^{-17}$ & $0.43$ & Infinite true\\[-1.5mm]
$\vdots$ & $\vdots$ & $\vdots$ & $\vdots$ & $\vdots$ & $\vdots$ & $\vdots$\\
17 & $\infty$ & $0.0$ & $2.4\cdot 10^{-17}$ & $4.8\cdot 10^{-17}$ & $1.00$ & Infinite true\\
\hline
\end{tabular}}
\end{table}

All values $\alpha_i,\beta_i$ are smaller than $1.3\cdot 10^{-16}$ and all
eigenvalues are declared to be from the regular part. In this particular example, in the second phase instances of eigenvalue 1 that has multiplicity 7 are correctly identified as finite eigenvalues. \end{example}

The above examples together with the discussion show that the situation is not without hope if the computed normal rank does not equal the actual normal rank. In the case of an underestimation this can be detected via the presence of eigenvalues of prescribed type, which should not be present. In this case we can repeat the computation of the normal rank or manually increase the
estimate. In case of an overestimate, we lose the ability to separate eigenvalues into true and random ones based on values $\alpha_i$ and $\beta_i$, but even in this case it can happen that Algorithm~3 correctly identifies the finite eigenvalues.

\section{Proof of Theorem \ref{main}}\label{sec:proof}
In this section, we provide the proof for the main theoretic result stated in Theorem~\ref{main}. We start with some technical preparations.
\begin{definition}
Let $\Omega\subseteq\C^{n,k}$. Then we call
$\Omega^\perp:=\{W\in \C^{n,n-k}\ |\ \exists U\in\Omega, \, W^*U=0\}$
the \emph{pointwise orthogonal complement} of $\Omega$ in $\C^{n,n}$.
\end{definition}
We aim to show that with $\Omega$ also
$\Omega^\perp$ is generic.

\begin{remark}\label{qlemma} \rm
For the proof of \cref{prop:perpgen}, we will need auxiliary
functions $q_j:(\C^n)^j\to\C^n$, $j=1,\dots,\ell$, where $\ell \le n$, with the following properties:
\begin{enumerate}
\item[1)] $\langle q_i(w_1,\dots,w_i), \, q_j(w_1,\dots,w_j)\rangle=0$ for $i,j=1,\dots,\ell$, $i\ne j$, and for all $w_1,\dots,w_\ell\in \C^n$.
\item[2)] $\langle q_i(w_1,\dots,w_i), \, w_j\rangle=0$ for $i=1,\dots,\ell$ and $j<i$.
\item[3)] The entries of $q_j(w_1,\dots,w_j)$ are polynomials in the real and imaginary parts of the entries of $w_1,\dots,w_j\in \C^n$ for $j=1,\dots,\ell$.
\end{enumerate}
These function can easily be recursively constructed using a Gram--Schmidt type orthogonalization without a normalization step. The omission of this step guarantees that the entries of the orthogonalized vectors are polynomials in the real and imaginary parts of the entries of the original vectors $w_1,\dots,w_\ell$. Indeed, set
$q_1(w_1):=w_1$ and if $q_1(w_1),\dots,q_{\ell-1}(w_1,\dots,w_{\ell-1})$ have already been constructed then define
\[
q_\ell(w_1,\dots,w_\ell):=\bigg(\prod_{j=1}^{\ell-1}\langle q_j,q_j\rangle\bigg) w_\ell-\sum_{i=1}^{\ell-1}\bigg(\prod_{j\ne i}\langle
q_j,q_j\rangle\bigg) \, \langle q_i,w_\ell\rangle \, q_i,
\]
where for the ease of notation we have dropped the dependence of $q_1,\dots,q_{\ell-1}$ on $w_1,\dots,w_{\ell-1}$.
The verification of the properties $1)$--$3)$ is then straightforward.
\end{remark}

\begin{proposition}\label{prop:perpgen}
Let $\Omega\subseteq\C^{n,k}$ be a generic set. Then also $\Omega^\perp\subseteq\C^{n,n-k}$
is a generic set.
\end{proposition}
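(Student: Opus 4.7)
My plan is to exhibit $\Omega^\perp$ as containing the projection of a generic subset of a larger product space, using the polynomial orthogonalization from \Cref{qlemma} to build a polynomial parameterization of the linear subspace $\{U : W^*U=0\}$, composing with the polynomial that cuts out $\Omega^c$, and finishing with a Fubini-type argument modeled on \Cref{lem:gen}. Concretely, I would use the maps $q_j$ of \Cref{qlemma} to define column by column
\[
v_\ell(W) := q_{n-k+\ell}(w_1,\ldots,w_{n-k},e_{n-k+1},\ldots,e_{n-k+\ell}),\qquad \ell=1,\ldots,k,
\]
and set $V(W):=[v_1(W)\;\cdots\;v_k(W)]$ together with the polynomial map
\[
\Phi:\mathbb{C}^{n,n-k}\times\mathbb{C}^{k,k}\to\mathbb{C}^{n,k},\qquad (W,S)\mapsto V(W)S.
\]
By properties 2) and 3) in \Cref{qlemma}, $W^*V(W)=0$ identically, the entries of $V(W)$ (and hence of $\Phi$) are polynomials in the real and imaginary parts of the entries of $W$ and $S$, and $V(W)$ has full rank $k$ precisely when the $n$ vectors $w_1,\ldots,w_{n-k},e_{n-k+1},\ldots,e_n$ span $\mathbb{C}^n$.

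As in the proof of \Cref{lem:gen}, I may assume that the complement of $\Omega$ lies in the zero set of a single real polynomial $p$ in the real and imaginary parts of the entries, and by intersecting with the generic set of full-rank matrices I may also assume every $U\in\Omega$ has full rank. The main obstacle is then to show that $p\circ\Phi$ is not the zero polynomial on $\mathbb{C}^{n,n-k}\times\mathbb{C}^{k,k}$. To do so I would observe that the additional condition ``the bottom $k\times k$ block of $U$ is invertible'' is itself generic (cut out by one determinant), so $\Omega$ contains a matrix $U_0$ satisfying it; a short dimension count shows this invertibility is equivalent to $U_0^\perp\cap\operatorname{span}(e_{n-k+1},\ldots,e_n)=\{0\}$. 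For any basis $W_0$ of $U_0^\perp$ (which has dimension $n-k$ since $U_0$ has full rank) this forces $w_1,\ldots,w_{n-k},e_{n-k+1},\ldots,e_n$ to span $\mathbb{C}^n$, hence $V(W_0)$ has rank $k$ and $\operatorname{range}(V(W_0))=W_0^\perp\supseteq\operatorname{range}(U_0)$. Solving $U_0=V(W_0)S_0$ for some $S_0\in\mathbb{C}^{k,k}$ then gives $p(\Phi(W_0,S_0))=p(U_0)\neq 0$.

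With $p\circ\Phi$ identified as a nonzero real polynomial, I would run the argument of \Cref{lem:gen} essentially verbatim, viewing $p\circ\Phi$ as a polynomial in the real and imaginary parts of the entries of $W$ and $S$: this yields a generic set $\Omega_2\subseteq\mathbb{C}^{n,n-k}$ such that for every $W\in\Omega_2$ there exists $S\in\mathbb{C}^{k,k}$ with $\Phi(W,S)\in\Omega$. Because $W^*\Phi(W,S)=0$ holds identically, every such $W$ lies in $\Omega^\perp$, so $\Omega_2\subseteq\Omega^\perp$ and $\Omega^\perp$ is generic.
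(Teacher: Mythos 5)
Your proof is correct, and it leans on the same key tool as the paper---the polynomial Gram--Schmidt maps $q_j$ from \Cref{qlemma}---but the construction you build with them is genuinely different and noticeably heavier. The paper fixes a single $\widehat U=[\widehat u_1\ \dots\ \widehat u_k]\in\Omega$ with $p(\widehat U)\neq 0$ and orthogonalizes its columns against the variable $W$, i.e.\ it sets $\widehat p(W):=p\bigl(q_{n-k+1}(w_1,\dots,w_{n-k},\widehat u_1),\dots,q_{n-k+1}(w_1,\dots,w_{n-k},\widehat u_k)\bigr)$; non-vanishing of $\widehat p$ is then immediate by evaluating at an orthonormal basis $\widehat W$ of $\operatorname{range}(\widehat U)^\perp$, where each $q_{n-k+1}$ returns $\widehat u_i$ itself, and every $W$ with $\widehat p(W)\neq 0$ lies in $\Omega^\perp$ directly. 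You instead orthogonalize the fixed vectors $e_{n-k+1},\dots,e_n$ against $W$ to get a polynomial frame $V(W)$ of $W^\perp$, and then sweep the whole complement with the extra parameter $S$; this forces you to (i) pass to the product space and invoke a Fubini argument \`a la \Cref{lem:gen}, and (ii) prove the auxiliary genericity fact that some $U_0\in\Omega$ has invertible bottom $k\times k$ block so that $V(W_0)$ actually spans $\operatorname{range}(U_0)$. All of these steps check out (your equivalence between the invertible bottom block and $\operatorname{range}(U_0)^\perp\cap\operatorname{span}(e_{n-k+1},\dots,e_n)=\{0\}$ is right, and the full-rank and Fubini arguments go through), so the only cost is length: what you buy is a polynomial parameterization of the entire fibre $\{U: W^*U=0\}$, which is more than the proposition needs, whereas the paper's choice of anchoring the construction at the columns of one element of $\Omega$ collapses both extra steps.
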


\begin{proof} We take an arbitrary $\widehat U= [\widehat u_1 \ \, \dots \ \, \widehat u_k]\in\Omega$.
By definition the pointwise orthogonal complement of $\Omega$ is contained in the set of common zeros of finitely many
polynomials in $2nk$ real variables. Since $\widehat U\in\Omega$, there exists a polynomial $p$ out of those polynomials
such that $p(\widehat U)\ne 0$.
Furthermore, let $q_1,\dots,q_{n-k+1}$ be the functions from \cref{qlemma} and define
\[
\widehat p(w_1,\dots,w_{n-k})=p\big(q_{n-k+1}(w_1,\dots,w_{n-k},\widehat u_{1}), \
\dots, \ q_{n-k+1}(w_1,\dots,w_{n-k},\widehat u_k)\big).
\]
Then it follows from \cref{qlemma} that $\widehat p$ is a polynomial in the
real and imaginary parts of the $n(n-k)$ entries of $w_1,\dots,w_{n-k}$.
(In fact, it is also a polynomial in the real and imaginary parts of the
entries of $w_1,\dots,w_{n-k},\widehat u_{1},\dots,\widehat u_k$, but we regard the
entries of $\widehat u_{1},\dots,\widehat u_k$
as constants here.) Furthermore, $\widehat p$ is not the zero polynomial. Indeed, let
$\widehat W = [\widehat w_1 \ \, \dots \ \, \widehat w_{n-k}] \in \C^{n,n-k}$ be a matrix with
orthonormal columns such that $\widehat W^*\widehat U=0$.
Then we have
$q_{n-k+1}(\widehat w_1,\dots,\widehat w_{n-k},\widehat u_i)=\widehat u_i$ for $i=1,\dots,k$
and consequently that $\widehat p(\widehat w_1,\dots,\widehat w_{n-k})=p(\widehat u_1,\dots,\widehat u_k)\ne 0$.

Now let $W = [w_1 \ \, \dots \ \, w_{n-k}]$ be such that
$\widehat p(w_1,\dots,w_{n-k})\ne 0$ and set
\[
U := [\, q_{n-k+1}(w_1,\dots,w_{n-k},\widehat u_1) \ \ \dots \ \ q_{n-k+1}(w_1,\dots,w_{n-k}, \widehat u_k) \, ].
\]
Then by construction and by definition of $\widehat p$ we have $W^*U=0$ and $U\in\Omega$ which implies $W\in\Omega^\perp$.
It follows by contraposition that the complement of $\Omega^\perp$ is contained in the set of zeros of $\widehat p$ and thus $\Omega^\perp$ is a generic set.
\end{proof}

We are now able to prove the main theoretical result of this paper.

\begin{proof} (of Theorem~\ref{main}).
First, we claim that there exists a generic set $\Omega_r\subseteq\C^{n,n-k}$ with the property that for each $W\in\Omega_r$
there exists a generic set $\Omega_r'\subseteq\C^{n,n-k}$ such that for all $\widehat Z\in\Omega_r'$
the pencil $W^*(A-\lambda B)\,Z$ is regular, where $Z$ is the upper $m\times(n-k)$ submatrix of $\widehat Z$ as in~\eqref{eq:11.8.19}.
To see this, observe that the determinant of $W^*(A-\lambda B)\,Z$ is a polynomial
in the real and imaginary parts of the entries of $W$ and $Z$ (and thus also in the entries of $W$ and $\widehat Z$)
that is not the zero polynomial, because the pencil $A-\lambda B$ has normal rank $n-k$ which means that
there exists a nonzero minor of size $n-k$. (Choosing $W$ and $\widehat Z$ appropriately, this minor can be extracted giving
a nonzero value for the determinant of $W^*(A-\lambda B)Z$.) Applying \cref{lem:gen} then proves the existence
of $\Omega_r$ and $\Omega_r'$ as above, where the latter set depends on $W\in\Omega_r$.

In the following, we will frequently work with the extended $n\times n$ pencil $\widehat A-\lambda\widehat B$ that is obtained from
$A-\lambda B$ by just adding $n-m$ zero columns.

Next, let $\Omega:=(\Omega_1\cap\Omega_2)^\perp\cap\Omega_r\subseteq\C^{n,n-k}$, where $\Omega_1$ and $\Omega_2$ are the
generic subsets of $\C^{n,k}$ from \cref{prop4.2} applied to $A-\lambda B$ and \cref{thm:nfp} applied
to $\widehat A-\lambda\widehat B$. Then $\Omega$ is generic.
Let $W\in\Omega$ be fixed. Then there exists a matrix $U\in\Omega_1\cap\Omega_2$ such that $W^*U=0$.
Since in particular $U\in\Omega_1$, there exist nonsingular matrices $P\in \C^{n,n}$ and $Q\in \C^{m,m}$ such that
\begin{equation}\label{eq:7.8.19}
P\,(A-\lambda B)\,Q=\left[\begin{array}{cc}R(\lambda)&0\\ 0& S(\lambda)\end{array}\right]\quad\mbox{and}\quad
PU=\left[\begin{array}{c}0\\ \widetilde U\end{array}\right],
\end{equation}
where $R(\lambda)$ and $S(\lambda)$ are the regular and singular parts of $A-\lambda B$, respectively, and
$PU$ is partitioned conformably with $P\,(A-\lambda B)\,Q$. Setting
$\widehat Q:=\operatorname{diag}(Q,I_{n-m})$, we then obtain
\begin{equation}\label{eq:8.8.19}
P\,(\widehat A-\lambda \widehat B) \, \widehat Q=\left[\begin{array}{ccc}R(\lambda)&0&0\\
0& S(\lambda)&0\end{array}\right]\quad\mbox{and}\quad
PU=\left[\begin{array}{c}0\\ \widetilde U\end{array}\right].
\end{equation}
Let $R(\lambda)$ have the size $r\times r$, i.e., we have
$\widetilde U\in \C^{n-r,k}$. Since $U$ and thus also $\widetilde U$ have full column rank $k$, there exists
a nonsingular matrix $T\in \C^{n-r,n-r}$ such that with $\widetilde P:=\operatorname{diag}(I_r,T)\cdot P$ we have
\[
\widetilde P\,(A-\lambda B)\,Q=\left[\begin{array}{ccc}R(\lambda)&0&0\\ 0&S_{11}(\lambda)&S_{12}(\lambda)\\ 0&S_{21}(\lambda)&S_{22}(\lambda)
\end{array}\right]\quad\mbox{and}\quad \widetilde PU=\left[\begin{array}{c}0\\ 0\\ I_k\end{array}\right],
\]
where $S_{11}(\lambda)\in \C^{n-k-r,n-k-r}$ and $\widetilde PU$ is partitioned conformably with $\widetilde P\,(A-\lambda B)\,Q$.
Observe that $W^*U=0$ implies $W^*\widetilde P^{-1}=[W_1 \ \, W_2 \ \, 0]$,
where $W_1\in \C^{n-k,r}$ and $W_2\in \C^{n-k,n-k-r}$ both have full column rank, i.e., the matrix
$\widetilde W:=[W_1 \ \, W_2]$ is invertible.

Next, we claim that there exists a generic set $\Omega_0\subseteq\C^{n,n-k}$ such that for each $\widehat Z\in\Omega_0$
the first $n-k$ rows of $\widehat Q^{-1}\widehat Z$ are linearly independent. This easily follows since the determinant of this submatrix
is a nonzero polynomial in the real and imaginary parts of the entries of $\widehat Z$. (Note that $\Omega_0$ depends on $\widehat Q$ and hence on $W$.)
Define $\Omega':=\Omega_0\cap(\Omega_3^\perp)\cap\Omega_r'\subseteq\C^{n,n-k}$, where $\Omega_r'$ and $\Omega_3$, the generic subset
from \cref{thm:nfp} (applied to the extended pencil $\widehat A-\lambda\widehat B$), depend on $W$.
Then $\Omega'$ is generic. Let $\widehat Z\in\Omega'$ and partition $\widehat Z$ as in~\eqref{eq:11.8.19}, i.e.,
\[
\widehat Z=\left[\begin{array}{c}Z\\ Z'\end{array}\right],\quad\mbox{where}\quad Z\in \C^{m,n-k},\ Z'\in \C^{n-m,n-k},
\]
and further partition
\[
Q^{-1}Z=\left[\begin{array}{c}\widetilde Z\\ Z_3\end{array}\right],\quad\mbox{where}\quad
\widetilde Z=\left[\begin{array}{c} Z_1\\ Z_2\end{array}\right],\
Z_1\in \C^{r,n-k},\ Z_2\in \C^{n-k-r,n-k},\ Z_3\in \C^{m-n+k,n-k}.
\]
Observe that due to the special block structure $\widehat Q=\operatorname{diag}(Q,I_{n-m})$, we have
\[
\widehat Q^{-1}\widehat Z=\left[\begin{array}{c}Q^{-1}Z\\ Z'\end{array}\right].
\]
Since $\Omega'\subseteq\Omega_0$ it follows that $\widetilde Z\in \C^{n-k,n-k}$ is
nonsingular and since $\Omega'\subseteq\Omega_3^\perp$ there exists a matrix $V\in \C^{n,k}$ such that $\widehat Z^*V=0$.
We then obtain
\[
W^*(A-\lambda B)Z=\widetilde W
\left[\begin{array}{c}R(\lambda)Z_1\\ S_{11}(\lambda)Z_2+S_{12}(\lambda)Z_3\end{array}\right]
=\widetilde W
\left[\begin{array}{cc}R(\lambda)&0\\ 0&\widetilde R(\lambda)\end{array}\right]
\widetilde Z,
\]
where $\widetilde R(\lambda)=S_{11}(\lambda)+S_{12}(\lambda)Z_3(Z_2^*Z_2)^{-1}Z_2^*$. Since
$\widetilde W$ and $\widetilde Z$ are invertible,
it follows that the pencil $W^*(A-\lambda B)Z$ is equivalent to the pencil $\operatorname{diag}(R(\lambda),\widetilde R(\lambda))$.
Furthermore, $U\in\Omega_2$ and $V\in\Omega_3$ together imply that
$\widehat A-\lambda \widehat B+\tau(UD_AV^*-\lambda UD_BV^*)$ is regular
and has the Kronecker canonical form
\[
\left[\begin{array}{ccc}R(\lambda)&0&0\\ 0&R_{\rm pre}(\lambda)&0\\ 0&0&R_{\rm ran}(\lambda)\end{array}\right]
\]
where $\tau\ne 0$, $D_A$, $D_B$, $R_{\rm pre}(\lambda)$ and $R_{\rm ran}(\lambda)$ are as in \cref{thm:nfp}. We will now show that $\widetilde R(\lambda)$
and $R_{\rm ran}(\lambda)$ are equivalent by showing that each eigenvalue of $R_{\rm ran}(\lambda)$ is also an eigenvalue of
$\widetilde R(\lambda)$. Since all eigenvalues of $R_{\rm ran}(\lambda)$ are simple and $\widetilde R(\lambda)$ has the same size
as $R_{\rm ran}(\lambda)$, it follows that the two pencils are equivalent. Now let
$\lambda_0\in \C\cup\{\infty\}$ be an eigenvalue of $R_{\rm ran}(\lambda)$ (generically, this eigenvalue will be finite)
and let $x$ be a right eigenvector and $y$ be a left eigenvector of $\widehat A-\lambda \,\widehat B+\tau\,(UD_AV^*-\lambda \, UD_BV^*)$
associated with $\lambda_0$. It follows from \cref{thm:nfp} that then either $V^*x=0$ or $U^*y=0$
(exactly one of these statements is true) and thus, since the columns of $W$ and $\widehat Z$ are bases of the orthogonal
complements of the ranges of $U$ and $V$, respectively, we obtain that either
$x=\widehat Z z$ for a nonzero $z\in \C^{n-k}$ or $y=Ww$ for a nonzero $w\in \C^{n-k}$ (again, exactly one of
the statements is true). Then either $w^*W^*(A-\lambda_0 B)Z=0$ or $W^*(A-\lambda_0 B)Zz=W^*(\widehat A-\lambda_0\widehat B)\widehat Zz=0$
and therefore $\lambda_0$ is an eigenvalue of $W^*(A-\lambda B)Z$. (Here, we have used that
$(A-\lambda B)Z=(\widehat A-\lambda \widehat B)\widehat Z$ which follows from the fact that $\widehat A-\lambda\widehat B$ has been obtained from
$A-\lambda B$ by just adding $n-m$ zero columns.) This finishes the proof of $1)$.

For the proof of $2)$, first observe that if $\lambda_0$ is an eigenvalue of $R(\lambda)$ and $x$ and $y$ are right
and left eigenvectors of the extended pencil $\widehat A-\lambda \widehat B$ associated with $\lambda_0$, respectively, then
following the same argument as in the paragraph above, it follows from \cref{thm:nfp} that both $x=\widehat Zz$ and
$y=Ww$ for nonzero vectors $z,w\in \C^{n-k}$. In particular, the map $x\mapsto z$ is a bijection from the set of right eigenvectors
of $\widetilde A-\lambda\widetilde B$ associated with $\lambda_0$ to the set of right eigenvectors of $W^*(A-\lambda B)Z$ associated
with $\lambda_0$. Similar observations hold for the map $y\mapsto w$, and even for the analogous maps in the case that $\lambda_0$ is
an eigenvalue of $R_{\rm ran}(\lambda)$, where, of course, the sets of eigenvectors have to be restricted to those eigenvectors
satisfying the orthogonality conditions from \cref{thm:nfp}.

Thus, if $\lambda_0\in \C$ is an eigenvalue of $W^*(A-\lambda B)Z$ with left eigenvector $y$ and right eigenvector $x$,
then both $Wy$ and $\widehat Zx$ are left respectively right eigenvectors of $\widehat A-\lambda\widehat B$ associated with $\lambda_0$ if
$\lambda_0$ is an eigenvalue of $R(\lambda)$. This implies $W_\perp(\widehat A-\lambda_0\widehat B)\widehat Zx=W_\perp(A-\lambda_0 B)Zx=0$
and $y^*W^*(\widehat A-\lambda_0\widehat B)Z_\perp=0$.
If on the other hand $\lambda_0$ is an eigenvalue of
$R_{\rm ran}$, then either $Wy$ or $\widehat Zx$ is a left or right eigenvector of $\widehat A-\lambda\widehat B$, respectively,
but not both. First assume, that $Wy$ is a left eigenvector of $\widehat A-\lambda\widehat B$ associated with $\lambda_0$. Then
$W_\perp^*(A-\lambda_0 B)Zx=W_\perp^*(\widehat A-\lambda_0\widehat B)\widehat Zx\ne 0$, because otherwise, keeping in mind
that $[W \ \, W_\perp]$ is nonsingular, we would have
$(\widehat A-\lambda_0\widehat B)\widehat Zx=0$ implying that $\widehat Zx$ is a right eigenvector which is a contradiction.
Analogously, we show that $y^*W^*(A-\lambda_0 B)Z_\perp \ne 0$ when $Zx$ is a right eigenvector of $\widehat A-\lambda\widehat B$
associated with $\lambda_0$. The claim in the case that $\lambda_0=\infty$ is an eigenvalue of $W^*(A-\lambda B)Z$
follows in a similar way.
\end{proof}

\section{Conclusions}
\label{sec:concl}
For the computation of eigenvalues of singular pencils, we have presented two new alternative approaches to the rank-complete perturbation
method \eqref{pert} of \cite{HMP19}.
Based on theory developed in \cref{sec:proj}, we have proposed a projection approach (Algorithm~1 in \cref{subsec:proj}), which reduces the
dimension of the problem, and may be particularly attractive when the normal rank $n-k$ is small.
While the perturbation approach from \cite{HMP19} usually gives very good results already, the projection method seems to be even more attractive due to its smaller size: while the accuracy of the two methods is usually comparable (and excellent), the projection scheme is computationally more efficient (depending on the value of $k$).
Compared to methods with perturbations of full rank (such as \cite{KreGlib22}), our methods need the normal rank of the pencil, but render more accurate results.

In \cref{sec:augm} we have proposed an augmentation approach (Algorithm~2), which enlarges
the dimension of the problem, but does not require any computations prior to the
solution of the generalized eigenproblem and does not change
the original matrices $A$ and $B$.
In \cref{sec:classif} we have studied in detail how to extract finite eigenvalues from eigenvalues of the regular part, leading to Algorithm~3.

For all three extraction techniques (perturbation, projection, augmentation), it is relevant to determine a correct value of the normal rank of the pencil.
In the total algorithm, this is an inexpensive operation.
Since it may occasionally be possible that the computed normal rank is incorrect, we have analyzed detection and repair of this situation in \cref{sec:nrank}.
In addition, for several singular pencils, it is easy to determine the normal rank
by inspecting the number of zero columns and rows
(provided that the remaining columns and rows are of full rank, which is often obvious from the matrix structure).
The polynomial systems of \cref{sec:poly} and the double eigenvalue problem of \cref{ex:doubleeig} are examples of this situation.

Code for the approaches developed in this paper is available in \cite{MultiParEig}.

\vspace{-4mm}


\begin{thebibliography}{10}

\bibitem{Arb12}
{\sc P.~Arbenz},
{\em Lecture Notes on Solving Large Scale Eigenvalue Problems},
ETH Z\"{u}rich, 2012. Available at \url{https://people.inf.ethz.ch/arbenz/ewp/Lnotes/lsevp.pdf}.

\bibitem{AGG88}
{\sc P.~Arbenz, W.~Gander, and G.~H.~Golub},
{\em Restricted rank modification of the symmetric eigenvalue problem: Theoretical considerations},
Linear Algebra Appl., 104 (1988), pp.~75--95.

\bibitem{Atk72}
{\sc F.~V.~Atkinson}, {\em Multiparameter {E}igenvalue {P}roblems}, Academic
 Press, New York, 1972.

\bibitem{BolVD94}
{\sc D.~Boley and P.~M.~Van Dooren}, {\em Placing zeros and the Kronecker canonical form}, Circuits Signals and Systems, 13 (1994), pp.~783--802.

\bibitem{BDD17}
{\sc A.~Boralevi, J.~van Doornmalen, J.~Draisma, M.~E.~Hochstenbach, and
 B.~Plestenjak}, {\em Uniform determinantal representations}, SIAM J.~Appl.~Algebra Geometry,
 1 (2017), pp.~415--441.

\bibitem{DD16lowrank}
{\sc F.~De~Ter{\'a}n and F.~M.~Dopico}, {\em
Generic change of the partial multiplicities of regular matrix pencils under low-rank perturbations},
SIAM J.~Matrix Anal.~Appl., 37 (2016), pp.~823--835.

\bibitem{DopNof}
{\sc F.~M.~Dopico and V.~Noferini},
{\em Root polynomials and their role in the theory of matrix polynomials},
Linear Algebra Appl., 584 (2020), pp.~37--78.

\bibitem{EmaVD82}
{\sc A.~Emami-Naeini and P.~M.~Van~Dooren}, {\em Computation of zeros of linear multivariable systems,}
Automatica 18 (1982), pp.~415-–430.

\bibitem{Gan59}
{\sc F.~R.~Gantmacher}, {\em Theory of Matrices. Volumes 1 and 2}, Chelsea, New York, 1959.

\bibitem{Gol73}
{\sc G.~H.~Golub}, {\em Some modified matrix eigenvalue problems},
SIAM Rev., 15 (1973), pp.~318--334.

\bibitem{HMP19}
{\sc M.~E.~Hochstenbach, C.~Mehl, and B.~Plestenjak}, {\em Solving singular
generalized eigenvalue problems by a rank-completing perturbation},
SIAM J.~Matrix Anal.~Appl., 40 (2019), pp.~1022--1046.

\bibitem{KoPl22}
{\sc T.~Ko\v{s}ir and B.~Plestenjak},
{\em On the singular two-parameter eigenvalue problems II},
Linear Algebra Appl., 649 (2022), pp.~433--451.

\bibitem{KreGlib22}
{\sc D.~Kressner and I.~\v{S}ain Glibi\'{c}},
{\em Singular quadratic eigenvalue problems: Linearization and weak condition numbers},
BIT Numer. Math., 63 (2023), 18.

\bibitem{LotzNoferini}
{\sc M.~Lotz and V.~Noferini},
{\em Wilkinson’s bus: weak condition numbers, with an
application to singular polynomial eigenproblems},
Found. Comput. Math., 20 (2020), pp.~1439--1473.

\bibitem{MPl09}
{\sc A.~Muhi{\v{c}} and B.~Plestenjak}, {\em On the singular two-parameter
 eigenvalue problem}, Electron.~J.~Linear Algebra, 18 (2009), pp.~420--437.

\bibitem{MPl14}
\leavevmode\vrule height 2pt depth -1.6pt width 23pt, {\em A method for computing all values $\lambda$ such that {A}$+\lambda${B} has a multiple
 eigenvalue}, Linear Algebra Appl., 440 (2014), pp.~345--359.

\bibitem{MCS}
{\em MCS Toolbox}. The Matrix Canonical Structure Toolbox for Matlab,
\url{www.cs.umu.se/english/research/groups/matrix-computations/stratigraph}.

\bibitem{MultiParEig}
{\em MultiParEig}. Toolbox for multiparameter eigenvalue problems,
\url{www.mathworks.com/matlabcentral/fileexchange/47844-multipareig}.

\bibitem{Advanpix}
{\em Multiprecision Computing Toolbox}. Advanpix, Tokyo. \url{www.advanpix.com}.

\bibitem{NofTown}
{\sc V.~Noferini and A.~Townsend},
{\em Numerical instability of resultant methods for multidimensional rootfinding},
SIAM J. Num. Anal., 54 (2016), pp.~719--743.

\bibitem{VD83}
{\sc P.~M.~Van~Dooren}, {\em Reducing subspaces: Definitions, properties and algorithms}. In:
B.~K{\aa}gstr\"om, A.~Ruhe (eds.), Matrix Pencils. Lecture Notes in Mathematics, vol.~973. Springer,
Berlin, Heidelberg, 1983, pp.~58--73.

\end{thebibliography}
\end{document}